\titleformat{\section}[block]{\bfseries\filcenter}{}{1em}{}
\DeclarePairedDelimiter\floor{\lfloor}{\rfloor}
\DeclareRobustCommand{\SkipTocEntry}[5]{}
\newtheorem{The}{Theorem}[section]
\newtheorem{Lem}[The]{Lemma}
\newtheorem{Rem}[The]{Remark} 
\newtheorem{Def}[The]{Definition}
\newtheorem{Pro}[The]{Proposition}
\newcommand{\R}{{\mathbb R}}
\newcommand{\Z}{{\mathbb Z}}
\newcommand{\N}{{\mathbb N}}
\newcommand{\eps}{\varepsilon}
\newcommand{\dimens}{n}
\newcommand{\norm}[1]{\left\lVert #1 \right\rVert}
\newcommand{\abs}[1]{\left\lvert #1 \right\rvert}
\newcommand{\aabs}[1]{\left\lVert #1 \right\rVert}
\newcommand{\ip}[2]{\left\langle #1,#2 \right\rangle}
\begin{document}

\begin{center}
    \huge{Uniqueness in an inverse problem of fractional elasticity}
    \vspace{2mm}
    
    \large{Giovanni Covi, Maarten de Hoop, Mikko Salo}
\end{center}

\section*{Abstract}
    \noindent We study an inverse problem for fractional elasticity. In analogy to the classical problem of linear elasticity, we consider the unique recovery of the Lam\'e parameters associated to a linear, isotropic fractional elasticity operator from fractional Dirichlet-to-Neumann data. In our analysis we make use of a fractional matrix Schr\"odinger equation via a generalization of the so-called Liouville reduction, a technique classically used in the study of the scalar conductivity equation. We conclude that unique recovery is possible if the Lam\'e parameters agree and are constant in the exterior, and their Poisson ratios agree everywhere. Our study is motivated by the significant recent activity in the field of nonlocal elasticity.

\tableofcontents

\section{Introduction}

We consider an inverse problem for a fractional elasticity operator of the form $$\mathbf E^s u := (\nabla\cdot)^s(\mathcal C(x,y)\nabla^s u)$$ for a fixed $s\in (0,1)$. The fractional gradient $\nabla^s$ and the fractional divergence $(\nabla\cdot)^s$ appearing in the definition of $\mathbf E^s$ 
go back to \cite{DGLZ12} and they will be defined in detail in Section \ref{sec-prel}. They can be thought of as nonlocal counterparts of the classical gradient and divergence operators, with the expected properties $(\nabla^s)^* = (\nabla\cdot)^s$ and $(\nabla\cdot)^s\nabla^s = (-\Delta)^s$. The tensor $\mathcal C(x,y):= C^{1/2}(x)C^{1/2}(y)$ is itself a nonlocal counterpart of the classical stiffness tensor $C$. Section \ref{sec-operator} is dedicated to the detailed definition of $\mathbf E^s$. 

\begin{center}
\includegraphics[height=6cm]{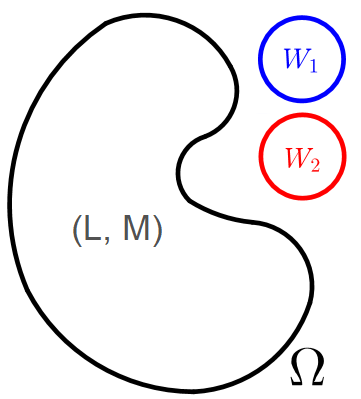}
\end{center}

We make the assumption that $C$ is isotropic, that is, its properties are completely described by two Lam\'e parameters $L,M$ (see Section \ref{subsec-elast}). The scalar functions $L,M$ are assumed to be constant outside a domain of interest $\Omega$, and unknown within it. The goal of the inverse problem we wish to study is to recover $L,M$ (and thus $C$) within $\Omega$ from measurements performed on its exterior $\Omega_e := \mathbb R^n\setminus\overline\Omega$. To this end, we start by considering the direct problem
\begin{equation*}
    \begin{array}{rll}
       \mathbf E^s u & =0& \quad \mbox{ in } \Omega \\
       u & = f & \quad \mbox{ in } \Omega_e
    \end{array},
\end{equation*}
for which we prove well-posedness in a weak sense for any sufficiently regular exterior datum $f$. This means that given any $f$ defined on $\Omega_e$ (e.g. we might have $f\in C^\infty_c(\Omega_e)$) there is one and only one solution $u_f\in H^s(\mathbb R^n)$ to the direct problem. Building on this, we define the exterior measurements as a nonlocal Dirichlet-to-Neumann (DN) map $\Lambda_{L,M}$, associating each exterior datum $f$ to the corresponding nonlocal Neumann data $\mathbf E^s u_f|_{\Omega_e}$. It is clear that $\Lambda_{L,M}$ carries information about the Lam\'e parameters $L,M$, as these are involved in the definition of the unique solution $u_f$. More specifically, we would like to recover $L,M$ within $\Omega$ from partial data, that is, from the knowledge of $\Lambda_{L,M} f|_{W_2}$ for all $f\in C^\infty_c(W_1)$, where $W_1, W_2$ are non-empty, open and disjoint subsets of $\Omega_e$. This means that the exterior data $f$ will be supported in $W_1$, while the measurements will be performed on $W_2$ only. These restrictions represent the physical situation in which not all of the exterior is accessible for measurement. We thus ask the following inverse problem:
\vspace{4mm}

\textbf{Q:} \emph{Does $\Lambda_{L_1,M_1}f|_{W_2}=\Lambda_{L_2,M_2}f|_{W_2}$ for all $f\in C^\infty_c(W_1)$ imply that $L_1=L_2$ and $M_1=M_2$ within $\Omega$?}
\vspace{4mm}

We build on the work of \cite{GSU20}. We will first use the assumption on the DN maps in order to obtain an integral identity, the so called Alessandrini identity, relating the difference of the DN maps to the differences of the Lam\'e parameters via some special solutions to the direct problem. Then, we shall test the Alessandrini identity with aptly chosen solutions in order to deduce the desired result. Such solutions will be produced by means of a Runge approximation property, which we will prove for our equation. The proof of the Runge approximation property will itself rely on a unique continuation property (UCP), which is the key point of the technique.  

However, as it will be clarified in Remark \ref{why-though}, the approach described above does not directly lead to the desired result. It is instead necessary to first reduce the given problem to a more manageable one of Schr\"odinger type. We do so by means of the so called \emph{fractional Liouville reduction}, which is reminiscent of the techniques used for both the classical and fractional conductivity equations (see \cite{C20} for the latter). After the reduction, the direct problem reads
\begin{equation*}
    \begin{array}{rll}
       (-\Delta)^{s-1}Dw -w\cdot Q & = 0 & \quad \mbox{ in } \Omega \\
       w & = g & \quad \mbox{ in } \Omega_e
    \end{array},
\end{equation*}
where $D$ is a fixed differential operator of second order, $Q$ is a new potential containing information relative to the Lam\'e parameters $L,M$, the new exterior datum $g$ is computed from $f$, and the new solution $w$ is computed from $u$ (see Section \ref{sec-liou} for the details).

The reduction from the fractional elasticity equation to matrix Schr\"odinger equation is possible for arbitrary Lam\'e coefficients. However, in order to make a reduction on the level of exterior measurements and to derive a suitable integral identity, we need to assume that the Poisson ratio is a fixed (unknown) function:

\begin{Def}
If $(L_1,M_1), (L_2,M_2)$ are two couples of Lam\'e parameters, we write $(L_1,M_1)\sim (L_2,M_2)$ if and only if they coincide on $\Omega_e$ and $\nu_1=\nu_2$ on $\R^n$, where the Poisson ratio $\nu$ relative to a couple of Lam\'e parameters $(L,M)$ is $\nu:= \frac{L}{(n-1)L+2M}$ (see Section \ref{subsec-elast} for the precise definition).
\end{Def}

As it turns out, if the Poisson ratio is a fixed function, then the technique described above will work for the transformed problem and will allow us to determine the matrix potential $Q$ from the nonlocal DN map. We can then recover the Lam\'e parameters $(L,M)$ from $Q$ and prove the following uniqueness result.

\begin{The}\label{main-theorem}
Let $\Omega, W_1, W_2\subset\mathbb R^n$ be bounded open sets such that $W_1, W_2\subset\Omega_e$, and assume $s\in(0,1)$. Let $(L_1, M_1)$ and $(L_2, M_2)$ be two couples of Lam\'e parameters satisfying assumptions (A1)-(A3). If $(L_1,M_1)\sim (L_2,M_2)$ and 
$$ \Lambda_{L_1,M_1}f|_{W_2} = \Lambda_{L_2,M_2}f|_{W_2} \qquad \mbox{ for all } f\in C^\infty_c(W_1),$$
then $(L_1,M_1)=(L_2,M_2)$. 
\end{The}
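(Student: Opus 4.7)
The plan is to follow the three-step template sketched in the introduction. First, I would apply the fractional Liouville reduction of Section \ref{sec-liou} to replace each elasticity problem by its matrix Schrödinger counterpart $(-\Delta)^{s-1}Dw_i - w_i\cdot Q_i = 0$ in $\Omega$, with $w_i = g_i$ in $\Omega_e$. The hypothesis $(L_1,M_1)\sim(L_2,M_2)$ is used precisely to make sure that the pointwise multiplier relating $u$ and $w$ agrees on $\Omega_e$ and is consistent across both problems, so that the reduction descends to the exterior data: the assumption $\Lambda_{L_1,M_1}f|_{W_2} = \Lambda_{L_2,M_2}f|_{W_2}$ for $f\in C^\infty_c(W_1)$ translates into equality $\Lambda_{Q_1}g|_{W_2} = \Lambda_{Q_2}g|_{W_2}$ for $g$ ranging over a corresponding set of vector-valued exterior data supported in $W_1$.

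Next I would derive an Alessandrini-type integral identity for the reduced problem. Pairing a solution $w_1$ associated with $Q_1$ and data $g_1\in C^\infty_c(W_1;\mathbb R^n)$ against a solution $w_2$ associated with $Q_2$ and data $g_2\in C^\infty_c(W_2;\mathbb R^n)$ in the bilinear form of $(-\Delta)^{s-1}D$, and exploiting its symmetry, should give
\begin{equation*}
\ip{(\Lambda_{Q_1}-\Lambda_{Q_2})g_1}{g_2} \;=\; \int_\Omega w_1\cdot(Q_1-Q_2)w_2\,\der x.
\end{equation*}
The hypothesis forces the right-hand side to vanish for every admissible pair $g_1,g_2$. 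To extract pointwise information about $Q_1-Q_2$, I would then invoke a Runge approximation property for the reduced equation: restrictions to $\Omega$ of solutions $w_i$ with $g_i\in C^\infty_c(W_i;\mathbb R^n)$ are dense in an appropriate target space such as $L^2(\Omega;\mathbb R^n)$. Combined with the vanishing integral identity this yields $Q_1\equiv Q_2$ in $\Omega$.

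The final step is purely algebraic. Using that $\nu_1=\nu_2$ on $\mathbb R^n$ together with the identity $Q_1=Q_2$ and the exterior agreement of the Lamé parameters, I would invert the explicit formula expressing $Q$ in terms of $(L,M)$ coming from the Liouville reduction. The Poisson-ratio condition is exactly what removes the one-parameter scalar gauge freedom that would otherwise obstruct injectivity of $(L,M)\mapsto Q$, so the equality of matrix potentials propagates back to $(L_1,M_1)=(L_2,M_2)$.

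The principal obstacle is the Runge step, which rests on a strong unique continuation property for the matrix operator $(-\Delta)^{s-1}D - Q\cdot$. The scalar fractional UCP used in \cite{GSU20} applies to $(-\Delta)^s$, but here the top-order part is the composition of $(-\Delta)^{s-1}$ with a local second-order operator $D$ and the zeroth-order term couples the components of $w$, so one has to verify that the coupled system still propagates exterior vanishing into $\Omega$. Once the UCP is established, Runge approximation follows from the standard Hahn--Banach plus duality argument adapted to the vector-valued setting. A secondary, more routine, obstacle is to check that the Liouville reduction and the symmetrisation yielding the Alessandrini identity remain valid at the regularity dictated by assumptions (A1)-(A3).
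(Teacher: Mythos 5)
Your first three steps follow the paper's template fairly closely: Liouville reduction, Alessandrini identity, Runge approximation, and the conclusion $Q_1=Q_2$ in $\Omega$. Two comments and one genuine gap.

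First, your worry about the UCP for the matrix operator is well-placed, but the paper resolves it without proving a new matrix UCP. The crucial structural point is that $(-\Delta)^{s-1}Dw=(-\Delta)^s w'$, where $w\mapsto w'$ is a bounded linear transformation defined via the Helmholtz decomposition of the rows of $w$. After pairing with the constant exterior vector $\gamma$ in the dual-Runge argument, one is reduced to the equation $(-\Delta)^s(\gamma\cdot w')=0$ in $W$ for a \emph{scalar} function $\gamma\cdot w'$ that also vanishes in $\Omega_e$, and the scalar UCP of \cite{GSU20,CMR20} applies directly. So the coupling problem you anticipate is neutralized by the algebraic identity, not by an extension of UCP to systems.

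Second, and this is the gap: your final step, which you describe as ``purely algebraic,'' is not. The formula $Q=\frac{\Gamma}{|\Gamma|^2}\cdot(-\Delta)^{s-1}D(\Gamma\otimes\mathrm{Id})$ involves the nonlocal operator $(-\Delta)^{s-1}D$ acting on $\Gamma$, so $\Gamma\mapsto Q$ is not a pointwise map and cannot be inverted by solving algebraic relations in $(L,M)$. What the Poisson-ratio hypothesis actually buys is the a priori structural constraint $\Gamma_2=r\Gamma_1$ for a \emph{scalar} unknown $r$ equal to $1$ on $\Omega_e$ (Remark \ref{gauge}); this same constraint is also used much earlier, in the proof of Lemma \ref{alex}, where it ensures that $B_{Q_1}(u_1,\Gamma_2\otimes f_2)=B_{Q_1}(u_1,u_2^*)$, so the Alessandrini identity itself would not close without it. To pass from $Q_1=Q_2$ to $\Gamma_1=\Gamma_2$, the paper substitutes $\Gamma_2=r\Gamma_1$ into $Q_2$, obtains that $r\,\mathrm{Id}$ solves a matrix-valued fractional elasticity Dirichlet problem (via the matrix analogue of Lemma \ref{reduction-lemma}, Remark \ref{matrix-u}) with exterior datum $\mathrm{Id}$, and then invokes well-posedness of that problem to conclude $r\equiv 1$. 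This is a PDE uniqueness argument, not an algebraic inversion, and it is the step your outline would not produce.

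A minor point: the source of vanishing in the right-hand side of the Alessandrini identity is $\langle v_1\cdot(Q_1-Q_2),(\Gamma_1\cdot\Gamma_2)v_2\rangle=0$ with the positive weight $\Gamma_1\cdot\Gamma_2$; harmless, but it should appear if you want to quote the identity faithfully.
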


Here assumptions (A1)-(A3) state that $L,M$ are $(2s+\eps)$-H\"older regular, constant outside of $\Omega$ and enjoy a certain positivity condition (see Section \ref{sec-operator}). 

\addtocontents{toc}{\SkipTocEntry}
\subsection{Motivation and connection to the literature}

 We consider nonlocal elasticity, that is, space-nonlocality, when
nonlocal stress is defined as the Riesz fractional integral of the
strain field in space. There exist several complex phenomena,
occurring possibly in damage zones \cite{BenZion}, that cannot be
addressed by classical local continua. For a recent overview, we refer
to \cite{Failla-Zingales-2020}. While the notion of nonlocal
elasticity dates back to the work of Mindlin \cite{Mindlin-1964,
  Mindlin-1965} (who considered a variational formulation resulting in
stable and well-posed solutions of boundary value problems, enabling
the removal of singularities from dislocations and cracks), we focus
on a fractional generalization of the classical Eringen integral model
\cite{Eringen-1972, Er02} of nonlocality. That is, we
consider nonlocal elasticity based on the so-called fractional (or
generalized) linear ``gradient''-elasticity model
\cite{CarpinteriCornettiSapora-2011, TarasovAifantis-2018}
\begin{equation*} \label{eq:gradela}
   \boldsymbol{\sigma} = C : [\boldsymbol{\epsilon}
               + a_{s} (- \Delta)^{s}
               \boldsymbol{\epsilon}] ,\quad
   a_{s} = \ell_{s}^2 ,
\end{equation*}
where $(- \Delta)^{s}$ is a fractional Laplacian in the
Riesz form, $s \in \mathbb R^+\setminus \mathbb Z$, $\ell_{s}$
is a material parameter signifying an internal length scale, $\boldsymbol{\sigma}$ is the stress tensor, and $\boldsymbol{\epsilon}$ is the strain tensor. Assuming that the stiffness tensor $C$ is isotropic and has constant coefficients results in the following fractional elasticity operator $ \mathbf T^s$ from \cite{TarasovAifantis-2018}:
$$ \mathbf T^s u := \nabla\cdot (C (1+a_s(-\Delta)^s)\nabla u) = \nabla\cdot (C \nabla u) + a_s\nabla\cdot (C (-\Delta)^s\nabla u).$$
The operator $\mathbf E^{s}$ which we study in the present paper generalizes $\mathbf T^{s}$ to the case of variable coefficients (see section \ref{operators} for the precise definitions of the fractional operators used here). In order to see this, compute $$ \mathbf E^{s+1} u := (\nabla\cdot)^{s+1} (C^{1/2}(x):C^{1/2}(y)\nabla^{s+1} u) = \nabla\cdot\{ (\nabla\cdot)^s C^{1/2}(x):C^{1/2}(y) \nabla^s\nabla u \},$$
where the relations $\nabla^{s+1}=\nabla^s\nabla$ and the corresponding one for the fractional divergence are definitions, in accordance to \cite{CMR20}. If the coefficients are assumed to be constant, a straightforward computation shows that $$\mathbf E^{s+1} u = \nabla\cdot( (\nabla\cdot)^s C \nabla^s\nabla u )= \nabla\cdot( C(\nabla\cdot)^s \nabla^s\nabla u ) = \nabla\cdot( C(-\Delta)^s \nabla u ),$$
and thus
$$ \mathbf T^s u = a_s \mathbf E^{s+1}u + \nabla\cdot (C\nabla u) .$$

In an alternative introduction of a model for fractional elasticity directly through a kernel,
one develops a fractional Taylor series using the Caputo fractional
derivative (involving a left-sided Riemann-Liouville fractional
integral) of its Fourier transform \cite{OdibatShawagfeh-2007,
  KilbasSrivastavaTrujillo-2006}. The physical basis of such an
introduction is an assumption pertaining to fractional spatial
dispersion for a nonlocal elastic continuum.

Zorica and Oparnica \cite{ZoricaOparnica2020} presented
time-fractional wave equations that model hereditary viscoelastic
behavior and space-fractional wave equations associated with certain
nonlocal elasticity models. For a number of fractional wave
equations, the authors provided mathematical evidence of energy
dissipation and conservation.

An inverse problem with nonlocal elasticity was considered by Askes
and Aifantis \cite{AskesAifantis}. For an important overview of
general fractional derivative equations, containing fractional time
and space derivatives, and inverse problems examining ill-posedness in
space dimension $1$, we refer to Jin and Rundell
\cite{JinRundell2015}.
\vspace{3mm}

The theory of linear elasticity has given rise to classical inverse problems which are related to our question, and can rather be considered the main inspiration of our study. We briefly present the main concepts of the classical theory of linear elasticity in Section \ref{subsec-elast} (see also \cite{landau-elasticity}).  The operator of classical elasticity is
$$\mathbf E u := \nabla\cdot( C\nabla u),$$
where $C$ is the stiffness tensor. Since $\mathbf E$ is a local operator, the measurements involved in the inverse problems are performed on the boundary $\partial\Omega$ of the domain $\Omega$ rather than in its exterior $\Omega_e$. 
Most of the results concern the isotropic case where $C$ depends on two scalar Lam\'e parameters $L$ and $M$. The main uniqueness results for this inverse problem in three dimensions are in \cite{NU1994, NU2003} and \cite{ER2002, ER2004}. They state that if $M$ is sufficiently close to a constant, or if either $L$ or $M$ is a fixed (unknown) function, then the Lam\'e parameters are uniquely determined by the boundary measurements. In \cite{ER2004} one also finds the classical analogue of Theorem \ref{main-theorem} stating that the Lam\'e parameters are determined if the Poisson ratio is a fixed function. The two-dimensional case is considered in \cite{IY2015}.  Many other different aspects of this inverse problem have been considered, such as inclusion detection \cite{Ela5}, identification of the elastic moduli in beams, plates and other geometric configurations \cite{Ela12,Ela48,ER2002,Ela70}, the linearized problem \cite{Ela81,Ela82,Ela83}, uniqueness \cite{Ela11,NU1994,NU2003}, and identification of residual stresses \cite{Ela9,Ela67,Ela106,Ela127}, among others. We refer to the survey \cite{survey} for many more results.
\vspace{3mm}

The inverse problem we study is related to the fractional Calder\'on problem, which was introduced in the seminal paper~\cite{GSU20} as a nonlocal counterpart to the classical Calder\'on problem arising in electric impedance tomography. Uniqueness was achieved in the case of bounded potentials and fractional exponent $s\in(0,1)$~\cite{GSU20} also with a single measurement \cite{GRSU20}, and later extended to rough potentials~\cite{RS17} and all positive fractional exponents $s\in\mathbb R^+\setminus\mathbb Z$~\cite{CMR20}. Perturbed versions of the same problem were studied e.g. in~\cite{CLR18} for $s\in (1/2, 1)$ and first order perturbations and \cite{CMRU20} for $s\in\mathbb R^+\setminus\mathbb Z$ and general high order local perturbations. Nonlocal perturbations were studied in some specific cases in \cite{BGU21, C21, LO21}. Uniqueness has also been studied in numerous other settings, including the fractional magnetic Schr\"odinger equation~\cite{C20a, CMR20, Li20a, Li20b, Li21} and the fractional heat equation~\cite{LLR19,RS17a}. A very recent fractional elasticity equation with constant principal coefficients \cite{LL21} and the fractional conductivity equation~\cite{C20} bear a strong connection with the present study. Moreover, the fractional Schr\"odinger equation was studied in the semilinear setting~\cite{LL19, Li20a, Li20b, Li21}. We refer to the surveys~\cite{S17,R18} for more information about the fractional Calder\'on problem.

\addtocontents{toc}{\SkipTocEntry}
\subsection{Organization of the rest of the article}
The remaining part of the paper is organized as follows. Section \ref{sec-prel} contains preliminaries from functional analysis, classical elasticity theory and nonlocal vector calculus, as well as the definitions of the notations used in the article. Section \ref{sec-operator} defines and describes the main object of study of the paper, the fractional elasticity operator $\mathbf E^s$. The Dirichlet problem related to such operator is studied in Section \ref{sec-wellposedness}, and in Section \ref{sec-liou} it is shown to be equivalent to a Dirichlet problem for the fractional Schr\"odinger equation. An integral identity relating coefficients and measured data, the so called Alessandrini identity, is shown to hold in Section \ref{sec-alex}. In Section \ref{sec-proof} we prove the Runge approximation property and eventually the main Theorem \ref{main-theorem}. 

\subsection*{Acknowledgments}

Giovanni Covi was supported by an Alexander-von-Humboldt postdoctoral fellowship. Maarten de Hoop was supported by the Simons Foundation under the MATH + X program, the National Science Foundation under grant DMS-2108175, and the corporate members of the Geo-Mathematical Imaging Group at Rice University. Mikko Salo was partly supported by the Academy of Finland (Centre of Excellence in Inverse Modelling and Imaging, grant 284715) and by the European Research Council under Horizon 2020 (ERC CoG 770924).

\section{Preliminaries}\label{sec-prel}
In this section we recall the definitions of relevant function spaces, define some concepts from mathematical physics related to the problem of elasticity, and establish some useful notations.

\addtocontents{toc}{\SkipTocEntry}
\subsection{Tensor products and contractions}
We will make wide use of the concepts of tensor product and contraction, mainly with respect to vectors. This will let us write our equations in a more understandable way. We let $\mathbb N$ be the set of \emph{strictly} positive integers.

\begin{Def}[Tensor product]\label{def-tensor}
Let $m,n \in \mathbb N$, and assume $a, b$ are multi-indices belonging to $\mathbb N^m, \mathbb N^n$ respectively. Consider two tensors $A_{\alpha_{1}, ... , \alpha_{m}}$ and $B_{\beta_{1}, ... , \beta_{n}}$, with $\alpha_i \in \{1, ..., a_i\}$ for $i=1, ..., m$ and $\beta_j \in \{1, ..., b_j\}$ for $j=1, ..., n$ respectively. The \emph{tensor product} $A\otimes B$ is the new tensor of elements
$$(A\otimes B)_{\alpha_1, ..., \alpha_m, \beta_1, ..., \beta_n} = A_{\alpha_{1}, ... , \alpha_{m}}B_{\beta_{1}, ... , \beta_{n}}.$$
In particular, if $m=n=1$ (and thus $A\in \R^a, B\in \R^b$ are vectors) the tensor product $A\otimes B$ is just the $a\times b$ matrix of elements $$(A\otimes B)_{\alpha,\beta} = A_\alpha B_\beta, \quad \mbox{for } \alpha = 1,..., a \mbox{ and } \beta = 1, ..., b.$$
\end{Def}

\begin{Def}[Tensor contraction of order $k$]
Let $m,n,a,b,A,B$ be as in Definition \ref{def-tensor}. Assume that $k \in \mathbb N$ is such that $k\leq \min \{m,n\}$, with $a_{m+\ell-k} = b_{\ell}$ for all $\ell \in \{1, ..., k\}$. We define the \emph{$k$-th contraction} of tensors $A$ and $B$ as the new tensor given by
$$ (A\cdot_k B)_{\alpha_1, ..., \alpha_{m-k}, \beta_{k+1}, ..., \beta_n} = A_{\alpha_1, ..., \alpha_{m-k}, \gamma_1, ..., \gamma_k} B_{\gamma_1, ..., \gamma_k, \beta_{k+1}, ..., \beta_n} ,$$
where we assume the Einstein summation convention on repeated indices. The ranges of the $\alpha,\beta$ indices appearing in the above formula are the same as in the definitions of $A,B$, while the $\gamma_\ell$ index has range in $\{ 1, ..., b_\ell \}$ for $\ell= 1, ..., k$. 
\end{Def}

 In the present work, we apply the above definition only with $k=1$ and $k=2$, so we use the standard symbols $\cdot$ and $:$ in place of the more general $\cdot_1$ and $\cdot_2$. It is useful to observe that when $A,B$ are vectors or matrices the above contraction operator $\cdot$ coincides with the usual scalar product and matrix multiplication.
 \vspace{3mm}
 
 The definitions are immediately extended to functions. If $A,B$ are tensor-valued functions defined on some set $\Omega$, then we let \begin{align*}
     A\otimes B : x &\mapsto A(x)\otimes B(x), \quad \mbox{for all } x\in\Omega, \\ A\cdot_k B : x &\mapsto A(x)\cdot_k B(x) , \quad \mbox{for all } x\in\Omega.
 \end{align*}   Accordingly, if $V,W$ are sets of tensor-valued functions defined on some set $\Omega$, then we let $V\otimes W$ be the new set of functions on $\Omega$ given by
$$V\otimes W := \{ v\otimes w , v\in V, w\in W \}.$$

\begin{Rem}
This should not be confused with the familiar Cartesian product $V\times W$. For example, if $a,b\in\N$ and $V,W$ are sets of vector-valued functions $v:\Omega\mapsto \R^a$ and $w:\Omega\mapsto\R^b$ respectively, then the elements of $V\times W$ map $\Omega$ to $\R^{a+b}$, while the ones of $V\otimes W$ map $\Omega$ to $\R^{a\times b}$. In the present work we shall often have $a=2$ and $V=\{v\}$ for some fixed function $v: \Omega \rightarrow \mathbb R^2$. 
\end{Rem}

The following Lemma collects some elementary properties of tensor products and contractions, which can be easily proved using the index notation:

\begin{Lem}\label{product-properties}
Let $m,n,p \in \N$, and assume that $a,b,c$ are multi-indices belonging to $\N^m, \N^n, \N^p$ respectively. Consider three tensors \begin{align*}
    A_{\alpha_1, ..., \alpha_m},& \quad \mbox{ with } \alpha_i\in \{1, ..., a_i\} \mbox{ for } i=1,...,m, \\ 
    B_{\beta_1, ..., \beta_n},& \quad \mbox{ with } \beta_j\in \{1, ..., b_j\} \mbox{ for } j=1,...,n, \\
    C_{\gamma_1, ..., \gamma_p},& \quad \mbox{ with } \gamma_k\in \{1, ..., c_k\} \mbox{ for } k=1,...,p. 
\end{align*} 
The following equalities hold whenever the tensor contractions are well-defined:
\begin{enumerate}[(i)]
    \item $(A\otimes B)\cdot_n C = (A\otimes C)\cdot_n B \quad \mbox{ for } n=p,$
    \item $A\cdot_k (B\otimes C) = (A\cdot_k B)\otimes C \quad \mbox{ for all } k\in\N,$
    \item $(A\cdot_k B)\cdot_q C = A\cdot_k (B\cdot_q C) \quad \mbox{ for all }  k,q\in\N \mbox{ with } n\geq k+q, $
    \item $A\cdot_{n+p}(B\otimes C) = (A\cdot_p C)\cdot_n B$.
\end{enumerate} 
If $A,B$ are tensor-valued functions of $x\in\Omega$, then the equalities
\begin{enumerate}[(i)]\setcounter{enumi}{4}
    \item $\nabla(A\otimes B) = \nabla A \otimes B + A\otimes \nabla B\quad \mbox{ for } m=a=1$,
    \item $\nabla(A\cdot_m B) = \nabla A\cdot_m B + \nabla B\cdot_m A\quad \mbox{ for }  m=n $,
    \item $\nabla\cdot (A\otimes B) = A\otimes\nabla\cdot B + B^T\cdot \nabla A \quad \mbox{ for }  m=a=1,\; n=2 $. 
\end{enumerate} 
also hold whenever the tensor contractions are well-defined. \hfill$\square$ 
\end{Lem}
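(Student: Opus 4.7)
The plan is to verify each of the seven identities by passing to index notation and using the Einstein summation convention, which reduces every statement to an elementary fact about real multiplication together with the ordinary Leibniz rule for partial derivatives. Since tensor product, contraction, gradient and divergence all act componentwise on the entries, once the indices are lined up there is nothing more to check.

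For the purely algebraic identities (i)--(iv), I would expand both sides in coordinates using the definitions. For (i), with $n=p$, both sides have components of the form $A_{\alpha_1,\dots,\alpha_m} B_{\beta_1,\dots,\beta_n} C_{\gamma_1,\dots,\gamma_p}$ with the last $n$ indices of the second and third factor contracted, and agreement follows from commutativity of the scalars $B_{\cdots}$ and $C_{\cdots}$. Identity (ii) is immediate since the contraction on the left sees only the first tensor factor of $B\otimes C$, so $C$ passes through untouched. Identity (iii) is simply associativity of contraction after renaming the dummy summation indices. Identity (iv) is the main book-keeping step: writing $A\cdot_{n+p}(B\otimes C)$ in components and using Definition \ref{def-tensor} to split $(B\otimes C)_{\beta,\gamma} = B_\beta C_\gamma$, the sum factors into a $p$-fold contraction against $C$ followed by an $n$-fold contraction against $B$, which is precisely $(A\cdot_p C)\cdot_n B$.

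For the differential identities (v)--(vii), I would apply the scalar Leibniz rule $\partial_j (A_\alpha B_\beta) = (\partial_j A_\alpha) B_\beta + A_\alpha (\partial_j B_\beta)$ componentwise. Identity (v) follows at once with $A$ a scalar. Identity (vi), with $m=n$, is obtained by summing the Leibniz rule over the $m$ contracted indices; the symmetric role of $A$ and $B$ in a full contraction accounts for the two terms on the right. Identity (vii) is the combination of (v) and (vi): if $A$ is vector-valued and $B$ is matrix-valued, then writing $\nabla\cdot(A\otimes B)_{\beta_2} = \partial_j (A_j B_{j\beta_2}) \cdot (\ldots)$ wait — more carefully, I would expand $[\nabla\cdot (A\otimes B)]_{\beta_2} = \partial_{\beta_1}(A_{\beta_1} B_{\beta_1 \beta_2})$ hmm, actually $\nabla\cdot$ contracts its derivative with the first index of its argument, and applying Leibniz produces exactly one term in which the derivative hits $B$ (yielding $A\otimes \nabla\cdot B$) and one in which it hits $A$ (yielding $B^T\cdot \nabla A$, the transpose accounting for the fact that the summation index of $\nabla A$ is matched with the first index of $B$).

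The work is entirely bookkeeping; there is no analytic difficulty. The only delicate point, and the one where I would be most careful, is identity (iv), since the splitting of the single contraction $\cdot_{n+p}$ into $\cdot_n$ and $\cdot_p$ has to respect the ordering of the indices imposed by the definitions of $\otimes$ and $\cdot_k$. Once that case is handled correctly, the remaining identities are routine index manipulations.
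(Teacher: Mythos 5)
Your proof is correct and carries out exactly the index-notation verification that the paper itself declares to be ``easily proved'' and leaves to the reader. The only small slip is in (vii): the hypothesis $m=a=1$ makes $A$ scalar (as you yourself take it to be in (v)), not vector-valued; in components $[\nabla\cdot(A\otimes B)]_{\beta_2}=\partial_{\beta_1}(A\,B_{\beta_1\beta_2})=(\partial_{\beta_1}A)B_{\beta_1\beta_2}+A\,\partial_{\beta_1}B_{\beta_1\beta_2}=(B^T\cdot\nabla A)_{\beta_2}+(A\otimes\nabla\cdot B)_{\beta_2}$, which is what your final verbal description correctly captures despite the false starts.
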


\begin{Rem}
For all vectors $v$ we use the convention $(\nabla v)_{ij}:= \partial_iv_j$.
\end{Rem}

\addtocontents{toc}{\SkipTocEntry}
\subsection{Fractional Sobolev spaces}
Let $r\in\mathbb R$, and assume that $\Omega, F\subset\mathbb R^n$ respectively are an open and a closed set. We indicate by $H^{r}(\mathbb R^n)$ the usual $L^2$-based Bessel potential spaces. These are endowed with the norm
$$\|u\|_{H^{r}} := \|(1+|\xi|^2)^{r/2}\hat u(\xi)\|_{L^2(\R^n)}.$$

\noindent Here by $\hat u(\xi) = \mathcal{F}u(\xi) := \int_{\mathbb R^n} e^{-ix\cdot\xi}u(x)\,dx$ we indicate the Fourier transform. Using the same notation as in \cite{McLean} and \cite{GSU20}, we also define the following fractional Sobolev spaces:
\begin{align*}
    H^{r}(\Omega) & := \{ u|_\Omega , u\in H^{r}(\mathbb R^n) \} ,\\
    \widetilde H^{r}(\Omega) & := \,\mbox{closure of } C^\infty_c(\Omega) \mbox{ in } H^{r}(\mathbb R^n), \\
    H^{r}_F(\mathbb R^n) & := \{u\in H^{r}(\mathbb R^n) : \mbox{ supp}(u)\subset F\}. 
\end{align*}
\noindent In particular, $H^{r}(\Omega)$ is endowed with the quotient norm $\|u\|_{H^{r}(\Omega)}:= \inf\{ \|w\|_{H^{r}} : w\in H^{r}(\mathbb R^n), w|_{\Omega} = u \}$. One sees that the following inclusions and identities
$$ \widetilde H^r(\Omega) \subseteq H^r(\Omega)\,,\quad \widetilde H^r(\Omega) \subseteq H^r_{\overline\Omega}\,,\quad H^r_F \subseteq H^r\,,$$
$$ (\widetilde H^r(\Omega))^* = H^{-r}(\Omega)\,,\quad (H^r(\Omega))^* = \widetilde H^{-r}(\Omega) $$
hold. If in addition $\Omega$ is known to be Lipschitz, then $\widetilde H^r(\Omega) = H^r_{\overline\Omega}$ for all $r\in \mathbb R$, as shown in \cite{CMRU20, McLean}.

We shall also use products of Sobolev spaces. If $r\in \mathbb R$, the space $H^r(\mathbb R^n)\times H^r(\mathbb R^n)$ consists of all the vectors $u=(u_1,u_2)$ with $u_1,u_2\in H^r(\mathbb R^n)$, endowed with the norm
$$\|u\|^2_{H^r(\mathbb R^n)\times H^r(\mathbb R^n)} := \|u_1\|^2_{H^r(\mathbb R^n)} + \|u_2\|^2_{H^r(\mathbb R^n)}.$$
\\

\addtocontents{toc}{\SkipTocEntry}
\subsection{The fractional Laplacian and other nonlocal operators}\label{operators}
Let $s\in\mathbb R^+\setminus\mathbb Z$ and $u\in\mathcal S$, the set of Schwartz functions. The fractional Laplacian of $u$ can be defined as
$$(-\Delta)^s u := \mathcal{F}^{-1}(|\xi|^{2s}\hat u(\xi)),$$
which makes $(-\Delta)^s$ a continuous map from $\mathcal S$ to $L^\infty$ (see \cite{GSU20}). The definition above can be uniquely extended in such a way that $(-\Delta)^s$ acts as a continuous operator $(-\Delta)^s : H^r(\mathbb R^n)\rightarrow H^{r-2s}(\mathbb R^n)$ for all $r\in\mathbb R$. Further extensions are possible to Sobolev spaces of negative exponent and to $L^p$-based Sobolev spaces (\cite{CMR20, GSU20}). It is possible to give many other definitions of the fractional Laplacian, which at least in the case $s\in(0,1)$ can be shown to be equivalent to ours (\cite{Kw15}). In particular, the fractional Laplacian can be defined as a singular integral by
$$ (-\Delta)^su(x) := C_{n,s}\,PV\int_{\mathbb R^n}\frac{u(x)-u(y)}{|x-y|^{n+2s}}dy ,$$
where $C_{n,s}:= \frac{4^s \Gamma(n/2+s)}{\pi^{n/2}|\Gamma(-s)|}$.
\vspace{3mm}

The fractional Laplacian presents a nonlocal behaviour, as made evident by the following property it enjoys:

\begin{The}[UCP for the fractional Laplacian]
Let $s\in\mathbb R^+\setminus\mathbb Z$, and assume $u\in H^r(\mathbb R^n)$ for some $r\in\mathbb R$. If $u=(-\Delta)^su=0$ in a non-empty open set $V$, then $u\equiv 0$ in $\mathbb R^n$.
\end{The}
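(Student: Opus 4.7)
The plan is to reduce this nonlocal unique continuation statement to a local one for a degenerate elliptic equation, via the Caffarelli--Silvestre extension, and then appeal to known Carleman-type estimates. I treat the case $s\in(0,1)$ first and remark on the extension to higher $s$ at the end.

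First, I would associate to $u\in H^r(\mathbb{R}^n)$ its Caffarelli--Silvestre extension $U(x,y)$ on the upper half-space $\mathbb{R}^{n+1}_+=\mathbb{R}^n\times(0,\infty)$, characterized (in weak form, locally) by
\[
\nabla\cdot(y^{1-2s}\nabla U)=0 \quad\text{in }\mathbb{R}^{n+1}_+,\qquad U(\cdot,0)=u,
\]
together with the Dirichlet-to-Neumann relation
\[
-\lim_{y\to 0^+} y^{1-2s}\partial_y U(\cdot,y)=c_{n,s}\,(-\Delta)^s u
\]
in the sense of distributions, for a positive constant $c_{n,s}$. The standard construction produces such a $U$ even for $u$ of finite Sobolev regularity, with the correct energy bounds in the weighted Sobolev space associated to the $A_2$ Muckenhoupt weight $y^{1-2s}$.

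Second, I would translate the hypothesis into Cauchy data on $V$: the assumptions $u|_V=0$ and $(-\Delta)^s u|_V=0$ mean that both $U$ and its weighted conormal derivative $y^{1-2s}\partial_y U$ vanish on $V\times\{0\}$. The heart of the argument is then the unique continuation property for the degenerate elliptic operator $L=\nabla\cdot(y^{1-2s}\nabla\,\cdot\,)$ across the weighted boundary: by Rüland's boundary Carleman estimate for the Caffarelli--Silvestre operator, vanishing of both Cauchy data on a nonempty open piece of $\mathbb{R}^n\times\{0\}$ forces $U\equiv 0$ in a half-ball neighborhood of any such boundary point in $\mathbb{R}^{n+1}_+$.

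Third, once $U$ vanishes on an open subset of $\mathbb{R}^{n+1}_+$, the strong unique continuation principle for second-order elliptic operators with smooth coefficients (the equation is uniformly elliptic on compact subsets of the open half-space, since $y^{1-2s}$ is smooth and positive there) propagates the vanishing to all of $\mathbb{R}^{n+1}_+$. Taking the trace on $\mathbb{R}^n\times\{0\}$ gives $u\equiv 0$, as desired.

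The hard part is the boundary Carleman estimate in step two: because the weight $y^{1-2s}$ is singular or degenerate at $y=0$, the classical Carleman machinery for uniformly elliptic operators does not apply directly, and one must construct a weight function adapted to the conformal geometry of $L$ and carefully handle the boundary terms in the integration by parts. For $s\in(0,1)$ this is exactly what is carried out in Rüland's work, and the result is cited in \cite{GSU20}. For general $s\in\mathbb{R}^+\setminus\mathbb{Z}$, one uses the higher-order extension developed in \cite{CMR20}, which replaces the second-order degenerate equation above by a suitable higher-order analogue and propagates the Cauchy-data vanishing through an iteration argument; otherwise the structure of the proof is identical.
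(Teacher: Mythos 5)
The paper does not prove this theorem itself; it quotes it as a known result with citations to \cite{GSU20} (for the base case $s\in(0,1)$) and \cite{CMR20} (for general $s$). Your outline for $s\in(0,1)$ --- Caffarelli--Silvestre extension, vanishing of both pieces of Cauchy data on $V\times\{0\}$, R\"uland's boundary Carleman estimate to get local vanishing of $U$, then interior strong unique continuation for the uniformly elliptic (away from $y=0$) weighted equation, and finally taking the trace --- is exactly the argument in \cite{GSU20}, so that part is correct and matches the cited source.

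Your final remark on general $s\in\mathbb R^+\setminus\mathbb Z$, however, misdescribes what \cite{CMR20} actually does. They do not deploy a higher-order Caffarelli--Silvestre-type extension for the UCP; instead they reduce to the base case. Writing $s=m+\sigma$ with $m=\lfloor s\rfloor$ and $\sigma\in(0,1)$, one notes that $u=0$ on the open set $V$ forces $(-\Delta)^m u=0$ on $V$ as well (this is just a local differential operator applied to a function vanishing on an open set), and $(-\Delta)^\sigma\big((-\Delta)^m u\big)=(-\Delta)^s u=0$ on $V$ by hypothesis. Applying the $\sigma\in(0,1)$ case to $v:=(-\Delta)^m u$ yields $v\equiv 0$, i.e.\ $|\xi|^{2m}\hat u=0$, hence $\hat u=0$ a.e.\ and $u\equiv 0$. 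So the higher-order case follows by a short elementary reduction, not by re-running a Carleman argument for a higher-order degenerate operator; you should replace your last paragraph accordingly, since as written it points to a proof strategy the cited reference does not use and that you have not supplied.
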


The proof of this theorem can be found in \cite{CMR20}, where the authors explore also the case of negative exponent $s$. The main case $s\in (0,1)$ upon which the proof is based was proved in \cite{GSU20}. One more property of the fractional Laplacian we shall use is the following fractional Poincar\'e inequality. 

\begin{The}
\label{thm:generalpoincare}
Let $\dimens \geq 1$, $s\geq t\geq 0$, $K\subset\R^\dimens$ a compact set and $u\in H_K^s(\R^\dimens)$. There exists a constant $\widetilde{c}=\widetilde{c}(n, K, s)> 0$ such that
\begin{equation*}
\aabs{(-\Delta)^{t/2}u}_{L^2(\R^\dimens)}\leq \widetilde{c}\aabs{(-\Delta)^{s/2}u}_{L^2(\R^\dimens)}.
\end{equation*}
\end{The}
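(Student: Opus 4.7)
The plan is to combine Plancherel's theorem with a dyadic frequency splitting, and then reduce the claim to a fractional Poincar\'e inequality that exploits the compact support of $u$.

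First, on the Fourier side I would write
\begin{equation*}
\aabs{(-\Delta)^{t/2}u}_{L^2(\R^\dimens)}^2 \;\sim\; \int_{\R^\dimens}|\xi|^{2t}|\hat u(\xi)|^2\,d\xi \;=\; \int_{\{|\xi|\leq 1\}}|\xi|^{2t}|\hat u|^2\,d\xi \;+\; \int_{\{|\xi|>1\}}|\xi|^{2t}|\hat u|^2\,d\xi,
\end{equation*}
up to an unimportant constant depending only on $n$. On the high-frequency piece the assumption $s\geq t\geq 0$ gives $|\xi|^{2t}\leq|\xi|^{2s}$, so this part is controlled by $\aabs{(-\Delta)^{s/2}u}_{L^2}^2$. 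On the low-frequency piece $|\xi|^{2t}\leq 1$, so it is controlled by $\aabs{u}_{L^2}^2$. The theorem therefore reduces to the Poincar\'e-type bound
\begin{equation*}
\aabs{u}_{L^2(\R^\dimens)}\leq C(n,K,s)\,\aabs{(-\Delta)^{s/2}u}_{L^2(\R^\dimens)}\qquad\mbox{for all } u\in H^s_K(\R^\dimens),
\end{equation*}
(the case $s=0$ being trivial, assume $s>0$).

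To prove this Poincar\'e bound I would exploit the hypothesis $\mathrm{spt}(u)\subset K$. Applying Cauchy--Schwarz to $\hat u(\xi)=\int_K u(x)e^{-ix\cdot\xi}\,dx$ yields the uniform pointwise estimate $|\hat u(\xi)|^2\leq |K|\,\aabs{u}_{L^2}^2$. Splitting $\aabs{u}_{L^2}^2\sim\int|\hat u(\xi)|^2\,d\xi$ again at some radius $R>0$ to be chosen, and using $|\hat u|^2\leq |\xi|^{-2s}|\xi|^{2s}|\hat u|^2$ on $\{|\xi|>R\}$, one obtains
\begin{equation*}
\aabs{u}_{L^2}^2 \;\leq\; c_n |B_R|\,|K|\,\aabs{u}_{L^2}^2 \;+\; c_n R^{-2s}\,\aabs{(-\Delta)^{s/2}u}_{L^2}^2.
\end{equation*}
Choosing $R$ small enough in terms of $n$ and $|K|$ so that $c_n|B_R|\,|K|\leq 1/2$, the first term on the right is absorbed into the left-hand side, giving the desired bound with a constant depending only on $n$, $s$, and $K$.

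The main obstacle is the Poincar\'e step itself; without compact support the inequality fails for functions concentrated near frequency zero. The compactness of $K$ enters only through the pointwise bound on $\hat u$, and is precisely what makes the absorption argument close. The remaining steps are routine Fourier-analytic manipulations, so I do not expect any technical difficulty beyond this one estimate.
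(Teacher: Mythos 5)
Your proposal is correct. Note that the paper itself does not prove Theorem~\ref{thm:generalpoincare}; it simply points to \cite{CMR20}, where several different proofs are collected. Your argument is a clean, self-contained one of the standard ``Fourier absorption'' type and closes cleanly: the splitting at $|\xi|=1$ reduces the claim to the Poincar\'e bound $\|u\|_{L^2}\lesssim_{n,K,s}\|(-\Delta)^{s/2}u\|_{L^2}$, and the pointwise estimate $|\hat u(\xi)|^2\le |K|\,\|u\|^2_{L^2}$ (from Cauchy--Schwarz on $\spt u\subset K$) together with the low/high split at radius $R$ and absorption (legitimate since $u\in L^2$) gives that bound with a constant depending only on $n$, $K$, $s$, exactly as the theorem demands. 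Two small remarks: the first reduction at $|\xi|=1$ is redundant once you have the Poincar\'e step, since the same low/high split with $|\xi|^{2t}\le 1$ on $\{|\xi|\le R\}$ and $|\xi|^{2t}\le R^{2(t-s)}|\xi|^{2s}$ on $\{|\xi|> R\}$ (using $t\le s$) proves the full inequality in one pass; and the Plancherel normalization constant appearing in the two terms of your absorption inequality is not actually the same $c_n$, but this does not affect the argument.
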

\noindent Many variously flavoured proofs of the above statement can be found in \cite{CMR20}. 
\\

Next, we shall define nonlocal counterparts to the gradient and divergence operators as in \cite{C20, CMR20}. These are special instances of the general nonlocal vector calculus operators introduced in \cite{DGLZ12, DGLZ13}. For $u\in C^\infty_c(\mathbb R^n, \mathbb R^n)$ and $s\in (0,1)$ the fractional gradient is 
$$ \nabla^s u (x,y) := (u(y)-u(x))\otimes \zeta(x,y), \quad\mbox{ where } \zeta(x,y):=\frac{C_{n,s}^{1/2}}{\sqrt{2}} \frac{x-y}{|x-y|^{n/2+s+1}}$$
and since one sees that $\|\nabla^s u\|_{L^2(\mathbb R^{2n})}\leq \|u\|_{H^s(\mathbb R^n)}$, the definition is extended to act as $H^s(\mathbb R^n)\rightarrow L^2(\mathbb R^{2n})$ by density. Many more properties of the fractional gradient can be found in \cite{C20a}. The fractional divergence $(\nabla\cdot)^s$ is defined as the adjoint of $\nabla^s$, that is $$\langle (\nabla\cdot)^s v, u \rangle := \langle v, \nabla^s u \rangle $$
for all $v\in L^2(\mathbb R^{2n}), u\in H^s(\mathbb R^n)$. Thus $(\nabla\cdot)^s: L^2(\mathbb R^{2n})\rightarrow H^{-s}(\mathbb R^n)$. It is useful to keep in mind that for more regular functions one has $$(\nabla\cdot)^s v (x) = \frac{C_{n,s}^{1/2}}{\sqrt{2}}\int_{\mathbb R^n} \frac{v(x,y)+v(y,x)}{|x-y|^{n/2+s+1}}\cdot (x-y) dy.$$

More generally, we define the fractional gradient of order $s\in\mathbb R^+\setminus\mathbb Z$ as $\nabla^{s}:=\nabla^{s-\floor{s}}\nabla^{\floor{s}}$, and a corresponding relation holds by definition for the fractional divergence. Most importantly, one sees that the property $$(\nabla\cdot)^s\nabla^s u = (-\Delta)^s u$$ holds in $H^s(\mathbb R^n)$. Finally, we define the $j$-th fractional derivative as
$$\partial^s_j u(x,y) := \nabla^su(x,y)\cdot e_j = \frac{C_{n,s}^{1/2}}{\sqrt{2}} \frac{u(y)-u(x)}{|x-y|^{n/2+s+1}} (x_j-y_j).$$
In particular this definition implies that $\partial^s_ju \cdot \partial^s_i v = \partial^s_iu \cdot \partial^s_j v $ holds for all vectors $u,v$ and all $i,j\in\{1,...,n\}$. This property, which of course does not hold in the classical case $s=1$, will be fundamental for our arguments.

\addtocontents{toc}{\SkipTocEntry}
\subsection{H\"older spaces}

Let $r\in(0,1)$. Following \cite{Ta96}, we define the H\"older space $C^r(\mathbb R^n)$ as the set of bounded functions $u$ such that $$ |u(x)-u(y)|\leq C|x-y|^{r} $$
for all $x,y\in\mathbb R^n$. If $k\in\N$, we let $C^k(\mathbb R^n)$ be the set of all bounded continuous functions $u$ such that $D^\beta u$ is bounded and continuous for all multi-indexes $|\beta|\leq k$. Finally, if $r\in\R^+\setminus\Z^+$ we define $C^r(\mathbb R^n)$ to be the set of all functions $u\in C^{\floor{r}}(\mathbb R^n)$ with $D^\beta u \in C^{r-\floor{r}}(\mathbb R^n)$ for all multi-indexes $|\beta|=\floor{r}$.
\\

Next, we shall list some properties of H\"older spaces which will be useful in our arguments. It follows immediately from the definition that for all $r,r'\in\R^+$ one has the embedding
\begin{equation}\label{embedding}
    r\leq r' \Rightarrow C^{r'}(\mathbb R^n)\subseteq C^r(\mathbb R^n).
\end{equation}
The H\"older space $C^r(\R^n)$ is clearly closed under composition with smooth functions, that is
\begin{equation}\label{smooth} u \in C^r(\R^n),\, F\in C^\infty(\R) \Rightarrow F(u) \in C^r(\R^n). \end{equation}
It is also known that $C^r(\R^n)$ is an algebra for all $r\in\mathbb R^+\setminus\Z^+$, which means
\begin{equation}\label{algebra} u,v \in C^r(\R^n) \Rightarrow uv \in C^r(\R^n). \end{equation}
Let $m\in\mathbb R$ and assume $\Psi\in OPS^m_{1,0}$, i.e. $\Psi$ is a pseudodifferential operator with symbol in the H\"ormander class $S^m_{1,0}$. If both $r, r-m \in \R^+\setminus\Z^+$, then 
\begin{equation}\label{mapping}
    \Psi : C^r(\R^n) \rightarrow C^{r-m}(\R^n).
\end{equation}

For later purposes, we also show that the operator $(-\Delta)^{s-1} \partial_i \partial_j$ maps $C^r(\R^n)$ to $C^{r-2s}(\R^n)$. We write the symbol as 
\[
\abs{\xi}^{2s-2} \xi_i \xi_j = \psi(\xi) \abs{\xi}^{2s-2} \xi_i \xi_j + (1-\psi(\xi)) \abs{\xi}^{2s-2} \xi_i \xi_j
\]
where $\psi \in C^{\infty}_c(\mathbb R^n)$ satisfies $\psi = 1$ near $0$, and note that the second symbol has the right mapping properties by \eqref{mapping}. The first symbol gives rise to a convolution operator $u \mapsto k * u$, where 
$$
k = F^{-1}(\psi) * F^{-1}(\abs{\xi}^{2s-2} \xi_i \xi_j).
$$
Now the first function in the convolution is Schwartz and the second one is homogeneous of order $-n-2s$ and smooth outside of $0$ \cite{HO:analysis-of-pdos}, which proves that $k \in L^1(\mathbb R^n)$. Since $\psi(\xi)$ is compactly supported, the Fourier characterization of H\"older spaces \cite{TRI-interpolation-function-spaces} implies that $u \mapsto k * u$ is bounded between any two H\"older spaces. This proves that whenever $r, r-2s \in \mathbb R^+ \setminus \mathbb Z^+$, one has 
\begin{equation} \label{mapping2}
(-\Delta)^{s-1} \partial_i \partial_j: C^r(\R^n) \to C^{r-2s}(\R^n).    
\end{equation}

\addtocontents{toc}{\SkipTocEntry}
\subsection{Fundamentals of the classical theory of linear elasticity}\label{subsec-elast}
This section is a brief introduction to some of the fundamental concepts in the theory of linear elasticity. Our main reference in this respect is Landau's book \cite{landau-elasticity}; however, we do not necessarily restrict our discussion to the case $n=3$. 
\vspace{2mm}

The theory of elasticity studies the mechanics of the deformations of continuous media. Let $\Omega\subset\mathbb R^n$ be an open, bounded set representing a physical body. When forces are applied to $\Omega$, the body answers to that by changing shape and volume, i.e. each point $x\in \Omega$ is transferred to a new location $x'\in\mathbb R^n$. Thus it is possible to define a \emph{displacement vector field} $u$ such that $u(x)= x'-x$ for all $x\in\Omega$. 

\vspace{2mm}

By computing the change in distance between two points which were originally close to each other, one sees that the new infinitesimal distance $dl'$ is related to the original one $dl$ by
$$ dl'^2 - dl^2 = 2u_{ik}dx_idx_k ,$$
where $$ u_{ik}:= \frac{1}{2}\left( \frac{\partial u_i}{\partial x_k} + \frac{\partial u_k}{\partial x_i} + \frac{\partial u_l}{\partial x_i}\frac{\partial u_l}{\partial x_k} \right)$$
is called the \emph{strain tensor}. It is customary in the case of small deformations to neglect the second order terms of the strain tensor and use instead what is known as \emph{Cauchy's strain tensor}, \emph{linear strain tensor} or \emph{small strain tensor}:
$$\varepsilon_{ik} := \frac{1}{2}\left( \frac{\partial u_i}{\partial x_k} + \frac{\partial u_k}{\partial x_i} \right) \approx u_{ik}.$$
One immediately sees that the tensor $\varepsilon$ is symmetric.
\vspace{2mm}

When deformed, the body leaves its equilibrium state and some \emph{internal forces} (or \emph{stresses}) $F$ are generated, which attempt to return the body to its original undeformed state. The resultant of such forces acting on a region $\Omega'\subset\Omega$ can be computed as $\int_{\Omega'}FdV$. Assuming that such forces do not act on a distance, one should be able to express their resultant as an integral over $\partial\Omega'$. This suggests that $F$ should be of the form $\nabla\cdot\sigma$, where the new tensor $\sigma$ is called \emph{stress tensor}. Further investigation of the total moment of the forces acting on $\Omega'$ reveals that the stress tensor $\beta$ should also be symmetric.
\vspace{3mm}

Because we are interested in a theory of \emph{linear} elasticity, we shall assume a linear relationship between the stress tensor $\sigma$, representing the forces acting on $\Omega$, and the strain tensor $\varepsilon$, which represents the resulting deformation of the body. This gives rise to the following \emph{Hooke's law} (or \emph{constitutive equation of linear elasticity})

$$ \sigma_{ij} = C_{ijlm}\varepsilon_{lm}, $$

\noindent where the new fourth-order tensor $C$, which completely describes the elastic behaviour of the body, is called \emph{elasticity} or \emph{stiffness tensor}. Using \emph{Newton's second law}, we can eventually define the \emph{operator of classical elasticity} $\mathbf E$ as \begin{equation}\label{eq:classical} \mathbf E u := \nabla\cdot (C(\nabla u + \nabla u^T)).\end{equation}
One can also associate a \emph{potential energy} $U$ to $\mathbf E$:\begin{align*} U(u)&:= \frac{1}{2} \int_{\mathbb R^{n}} C_{ijlm}(x) \varepsilon_{ij}(x)\varepsilon_{lm}(x)\,dx .\end{align*}
This quantity is often assumed to be non-negative, with $U(u)=0$ holding if and only if $u= 0$.
\vspace{3mm}

Let us now assume that the material is \emph{isotropic}, i.e. it is completely characterized by properties which are independent of direction. In this case the elasticity tensor can be expressed as
$$ C_{ijlm} = \lambda \delta_{ij}\delta_{lm} + \mu( \delta_{il}\delta_{jm} + \delta_{im}\delta_{jl}), $$
where $\lambda, \mu$ are called \emph{Lam\'e parameters}. Consequently, Hooke's law becomes
$$ \sigma_{ij} = \lambda tr(\varepsilon)\delta_{ij} + 2\mu \varepsilon_{ij} = \left( \lambda+\frac{2\mu}{n} \right)tr(\varepsilon)\delta_{ij} + 2\mu \left( \varepsilon_{ij} - \frac{tr(\varepsilon)}{n}\delta_{ij} \right). $$
The quantities $k:=\lambda+2\mu/n$ and $\mu$ (this last one sometimes indicated by $G$) are respectively called \emph{bulk} and \emph{shear moduli}. Thermodynamic considerations ensure the positivity of both $k$ and $\mu$. 

The corresponding tensors $tr(\varepsilon)\mbox{Id}$ and $\varepsilon - \frac{tr(\varepsilon)}{n}\mbox{Id}$ are called \emph{dilational} and \emph{deviatoric strain tensors}. The first one of them represents the \emph{hydrostatic compression} of the body, i.e. a deformation in scale but not in shape, and it is independent of the coordinate system. On the other hand, the second tensor represents a \emph{pure shear}, i.e. a deformation in which the volume is unchanged, and it is trace-free.
\vspace{1mm}

It is also possible to compute the strain tensor given the stress tensor. By Hooke's law we have 
$$ tr(\sigma) = (n\lambda+2\mu) tr(\varepsilon) = nktr(\varepsilon),$$ 
and since $k>0$ we can write $\sigma_{ij} = \frac{\lambda tr(\sigma)}{nK}\delta_{ij} + 2\mu \varepsilon_{ij}$. This eventually gives
$$ \varepsilon_{ij} = \frac{1}{2\mu}\left(\sigma_{ij} -\frac{\lambda tr(\sigma)}{nk}\delta_{ij}\right) = s_{ijlm}\sigma_{lm}, $$
where the new tensor $s_{ijlm}:= \frac{\delta_{il}\delta_{jm}}{2\mu} - \frac{\lambda\delta_{ij}\delta_{lm}}{2n\mu k}$ is the \emph{isotropic compliance tensor}.

 The Poisson effect indicates the physical phenomenon observed in the study of elastic materials in which a body reacts to a compression (resp. extension) along one axis with an extension (resp. compression) in the perpendicular directions. The \emph{Poisson ratio} $\nu$ is defined as the amount of transverse extension divided by the amount of axial compression. It is easily computed in the case of a \emph{homogeneous deformation} of a thin rod along its axis. If such axis is oriented in the $e_n$ direction and the applied pressure is $p$, then $ \sigma_{ii}= p\delta_{in}$ for all $i\in\{1,...,n\}$. Therefore
$$ \varepsilon_{ii} = \frac{1}{2\mu}\left(\sigma_{ii} -\frac{\lambda tr(\sigma)}{nk}\right) =  \frac{p}{2\mu}\left(\delta_{in} -\frac{\lambda}{nk}\right), \quad \mbox{ for all } i\in\{1,...,n\}, $$
which gives
$$ \nu := -\frac{\varepsilon_{11}}{\varepsilon_{nn}} = -\frac{\delta_{1n} -\frac{\lambda}{nk}}{\delta_{nn} -\frac{\lambda}{nk}} = \frac{\lambda}{nk -\lambda} =  \frac{k-\frac{2\mu}{n}}{(n-1)k+\frac{2\mu}{n}} =  \frac{\frac{k}{\mu}-\frac{2}{n}}{(n-1)\frac{k}{\mu}+\frac{2}{n}} .$$
Given the positivity of $k$ and $\mu$, it is always true that $\nu\in\left( -1,\frac{1}{n-1} \right)$. It is also clear that $\nu$ depends only on the ratio $k/\mu$ rather than on the two moduli taken separately.

\section{The isotropic fractional elasticity operator}\label{sec-operator}
In this section we introduce a model for linear fractional elasticity derived from the classical one, which is related to the model proposed by Tarasov and Aifantis in \cite{TarasovAifantis-2018}. To this end we use the fractional divergence and gradient, and the result will be a new self-adjoint operator $\mathbf E^s$. We also assume throughout the paper the stiffness tensor $C$ to be isotropic and such that the associated Lam\'e parameters $L, M$ satisfy
\begin{enumerate}[label=(A\arabic*)]
    \item there exists $\eps>0$ such that $L,M \in C^{2s+\eps}(\mathbb R^n)$, 
    \item there exist $L_0, M_0 \in \mathbb R$ such that $L-L_0 = M-M_0 = 0$ in $\Omega_e$, and
    \item the functions $M$ and $K:=L+\frac{2M}{n}$ are positive and bounded away from $0$.
\end{enumerate}

Given that our nonlocal operators act on two-point functions, we need a preparatory Lemma.

\begin{Lem}[Square root of stiffness tensor]\label{square-root}
Let the Lam\'e parameters $L,M$ of the isotropic stiffness tensor $C$ be as in assumptions (A1)-(A3). There exists a unique pair of real valued functions $\lambda, \mu $ verifying (A1)-(A3) and $$ C^{1/2} : C^{1/2} = C,$$
where the new tensor $C^{1/2}$ is defined as $$ C^{1/2}_{ijlm} := \lambda \delta_{ij}\delta_{lm} + \mu( \delta_{il}\delta_{jm} + \delta_{im}\delta_{jl}).$$ Moreover, we have
\begin{equation}\label{two-dots}\begin{aligned}
    C^{1/2}_{ij\alpha\beta}(x)C^{1/2}_{\alpha\beta lm}(y) &= (n\lambda(x)\lambda(y)+2\lambda(x)\mu(y) + 2\lambda(y)\mu(x)) \delta_{ij}\delta_{lm} \\ & \quad + 2\mu(x)\mu(y)\delta_{il}\delta_{jm} + 2\mu(x)\mu(y)\delta_{im}\delta_{jl}
\end{aligned},\end{equation}
and therefore $$C^{1/2}_{ij\alpha\beta}(x)C^{1/2}_{\alpha\beta lm}(y) = C^{1/2}_{lm\alpha\beta}(y)C^{1/2}_{\alpha\beta ij}(x). $$
\end{Lem}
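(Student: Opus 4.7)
The plan is to reduce the relation $C^{1/2}{:}C^{1/2}=C$ to a pointwise pair of scalar equations in $\lambda(x)$ and $\mu(x)$, solve them explicitly, fix the branch of the square roots by the positivity condition (A3), and then check (A1), (A2) and the two-point identity \eqref{two-dots} separately.

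First I would fix $x$ and substitute the definitions of $C^{1/2}$ and $C$ into the contraction. Using $\delta_{\alpha\beta}\delta_{\alpha\beta}=n$ and the elementary identities between Kronecker deltas, a direct index computation gives
\[
C^{1/2}_{ij\alpha\beta}C^{1/2}_{\alpha\beta lm}=(n\lambda^2+4\lambda\mu)\,\delta_{ij}\delta_{lm}+2\mu^2(\delta_{il}\delta_{jm}+\delta_{im}\delta_{jl}).
\]
Matching this with $C_{ijlm}=L\delta_{ij}\delta_{lm}+M(\delta_{il}\delta_{jm}+\delta_{im}\delta_{jl})$, and using linear independence of the two tensor structures, yields the scalar system $2\mu^2=M$ and $n\lambda^2+4\lambda\mu=L$. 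Since $M>0$ by (A3), the first equation forces $\mu=\sqrt{M/2}$ (positive root). The second is a quadratic in $\lambda$ whose discriminant is $16\mu^2+4nL=4(2M+nL)=4nK$; hence $\lambda=(-2\mu\pm\sqrt{nK})/n$, and a direct computation shows $\lambda+2\mu/n=\pm\sqrt{K/n}$. The requirement that the new $\kappa:=\lambda+2\mu/n$ be positive selects the plus sign, so $(\lambda,\mu)$ is uniquely determined pointwise.

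Next I would verify the three assumptions on $(\lambda,\mu)$. (A3) is automatic, since $\mu=\sqrt{M/2}$ and $\kappa=\sqrt{K/n}$ inherit uniform positive lower bounds from those of $M$ and $K$. For (A1), since $M$ and $K$ are bounded away from zero, the map $t\mapsto \sqrt{t}$ is smooth on a neighbourhood of their ranges, so \eqref{smooth} yields $\sqrt{M/2},\sqrt{nK}\in C^{2s+\eps}(\mathbb R^n)$, and then the algebra property \eqref{algebra} gives $\lambda,\mu\in C^{2s+\eps}(\mathbb R^n)$. Assumption (A2) is immediate: since $L\equiv L_0$ and $M\equiv M_0$ on $\Omega_e$, the pointwise formulas produce constants $\lambda_0,\mu_0$ there.

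Finally, to prove the two-point identity \eqref{two-dots}, I would repeat the same index expansion but retain the separate $x$- and $y$-dependence of the scalar factors. The four cross-terms are $\lambda(x)\lambda(y)\,\delta_{ij}\delta_{\alpha\beta}\delta_{\alpha\beta}\delta_{lm}=n\lambda(x)\lambda(y)\delta_{ij}\delta_{lm}$, $\lambda(x)\mu(y)\cdot 2\delta_{ij}\delta_{lm}$, $\mu(x)\lambda(y)\cdot 2\delta_{ij}\delta_{lm}$, and $\mu(x)\mu(y)\cdot 2(\delta_{il}\delta_{jm}+\delta_{im}\delta_{jl})$; summing them gives exactly \eqref{two-dots}. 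The asserted symmetry is then read off directly: the coefficient of $\delta_{ij}\delta_{lm}$ in \eqref{two-dots} is manifestly symmetric under $x\leftrightarrow y$, and the tensor $\delta_{il}\delta_{jm}+\delta_{im}\delta_{jl}$ is symmetric under $(ij)\leftrightarrow(lm)$. There is no serious obstacle in this proof; the one point that calls for care is the selection of the correct branch of the square root in the quadratic for $\lambda$, which is precisely what the positivity condition (A3) forces.
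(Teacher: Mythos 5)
Your proposal is correct and follows essentially the same route as the paper's own proof: the same index expansion, the same reduction to the scalar system $2\mu^2=M$, $n\lambda^2+4\lambda\mu=L$, the same branch selection via positivity of $\mu$ and $\kappa=\lambda+2\mu/n$, and the same verification of (A1)--(A3) and the two-point formula. The only cosmetic difference is that you spell out the quadratic-formula step and invoke \eqref{algebra} where a vector-space argument suffices for $\lambda$; neither affects correctness.
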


\begin{proof} Note that
\begin{align*}
    C^{1/2}_{ij\alpha\beta}C^{1/2}_{\alpha\beta lm}&= (\lambda\delta_{ij}\delta_{\alpha\beta} + \mu\delta_{i\alpha}\delta_{j\beta} + \mu\delta_{i\beta}\delta_{j\alpha})(\lambda\delta_{\alpha\beta}\delta_{lm} + \mu\delta_{\alpha l}\delta_{\beta m} + \mu\delta_{\alpha m}\delta_{\beta l}) \\ &= (n\lambda^2+4\lambda\mu) \delta_{ij}\delta_{lm} + 2\mu^2\delta_{il}\delta_{jm} + 2\mu^2\delta_{im}\delta_{jl} 
\end{align*} and
$$C_{ijlm} = L\delta_{ij}\delta_{lm} + M\delta_{il}\delta_{jm} + M\delta_{im}\delta_{jl}. $$ We look for $\lambda, \mu$ such that $C^{1/2}_{ij\alpha\beta}C^{1/2}_{\alpha\beta lm} = C_{ijlm}$.
Since $\mu$ must be positive, we have $\mu := \sqrt{M/2}$, which also ensures that $\mu$ is bounded away from $0$. This gives two possible choices for $\lambda$, namely $\lambda_\pm = \frac{1}{n}(\pm\sqrt{2M+nL}-\sqrt{2M})$. However, the required positivity of the coefficient $k:=\lambda+2\mu/n$ ensures that $\lambda = \frac{1}{n}(\sqrt{2M+nL}-\sqrt{2M})$. Now $k= \sqrt{K/n}$, and thus it is bounded away from $0$. This proves that $\lambda,\mu$ satisfy condition (A3). Given that the square root is a smooth function when considered far from $0$, by formula \eqref{smooth} we deduce $\mu, k \in C^{2s+\eps}(\mathbb R^n)$, which in turn implies $\lambda\in C^{2s+\eps}(\mathbb R^n)$ as well and proves (A1). Since $L,M$ are constant outside of $\Omega$, so must be $\lambda,\mu$ too, which proves (A2). The last equalities in the statement of the lemma follow easily from the computations above.
\end{proof}

Similarly to what was done in \cite{C20}, we can define the new fractional elasticity operator
\begin{equation}\label{eq:main-operator}
\mathbf E ^s u := (\nabla\cdot)^s(C^{1/2}(x):C^{1/2}(y)(\nabla^s u + \nabla^{s}u^T)(x,y)).    
\end{equation}
Observe that this corresponds to taking as fractional Cauchy's strain tensor
\begin{equation}\label{strain-tensor}
     \varepsilon_{ik}^s := \frac{1}{2}(\partial^s_ku_i + \partial^s_iu_k), \qquad \mbox{i.e.}\quad \varepsilon^s := \frac{1}{2}(\nabla^su+\nabla^su^T), 
\end{equation}
and then as a fractional, "symmetrized" version of Hooke's law
$$ \sigma^s_{ij}(x,y) = C^{1/2}_{ij\alpha\beta}(x)C^{1/2}_{\alpha\beta lm}(y)\varepsilon^s_{lm}(x,y). $$

We next prove the following lemma about $\mathbf E^s$, which motivates definition \eqref{eq:main-operator}:

\begin{Lem}[Properties of $\mathbf E^s$]\label{self-adjointness} Let $L,M$ satisfy assumptions (A1), (A2), and let $s\in(0,1)$. The operator $\mathbf E^s$ is self-adjoint, and it maps $H^s(\mathbb R^n)$ to $H^{-s}(\mathbb R^n)$.
\end{Lem}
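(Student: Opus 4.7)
The plan is to prove both assertions from the mapping properties of $\nabla^{s}$ and $(\nabla\cdot)^{s}$ recalled in Section \ref{operators}, combined with the algebraic symmetries of the coefficient tensor
\[
A(x,y) := C^{1/2}(x):C^{1/2}(y)
\]
established in Lemma \ref{square-root}.

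For the mapping property I would factor $\mathbf E^{s}$ as
\[
H^{s}(\mathbb R^{n}) \;\xrightarrow{\;\nabla^{s}\;}\; L^{2}(\mathbb R^{2n}) \;\xrightarrow{\;A\,\cdot\;}\; L^{2}(\mathbb R^{2n}) \;\xrightarrow{\;(\nabla\cdot)^{s}\;}\; H^{-s}(\mathbb R^{n}).
\]
The first and third arrows are bounded by Section \ref{operators}, and replacing $\nabla^{s} u$ by $\nabla^{s} u + \nabla^{s} u^{T}$ is clearly an $L^{2}(\mathbb R^{2n})$-bounded operation since it only transposes indices. Multiplication by $A$ is bounded on $L^{2}(\mathbb R^{2n})$ because assumption (A1), together with Lemma \ref{square-root}, yields $\lambda,\mu \in C^{2s+\eps}(\mathbb R^{n}) \subset L^{\infty}(\mathbb R^{n})$, so by \eqref{two-dots} every entry of $A$ lies in $L^{\infty}(\mathbb R^{n}\times \mathbb R^{n})$.

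For self-adjointness I would pair $\mathbf E^{s}u$ with $v \in H^{s}(\mathbb R^{n})$ and push the fractional divergence across using $\langle (\nabla\cdot)^{s}w,\phi\rangle = \langle w, \nabla^{s}\phi\rangle_{L^{2}(\mathbb R^{2n})}$:
\[
\langle \mathbf E^{s}u, v\rangle \;=\; \int_{\mathbb R^{2n}} A_{ijlm}(x,y)\bigl[(\nabla^{s}u)_{lm} + (\nabla^{s}u)_{ml}\bigr](\nabla^{s}v)_{ij}\,dx\,dy.
\]
The symmetry of $A$ in its last two indices, evident from \eqref{two-dots}, collapses the symmetrization into a factor of $2$, so the identity $\langle \mathbf E^{s}u,v\rangle = \langle u, \mathbf E^{s}v\rangle$ reduces to
\[
\int A_{ijlm}(x,y)(\nabla^{s}u)_{lm}(\nabla^{s}v)_{ij}\,dx\,dy \;=\; \int A_{ijlm}(x,y)(\nabla^{s}u)_{ij}(\nabla^{s}v)_{lm}\,dx\,dy,
\]
since the same manipulation applied to $\langle u,\mathbf E^{s}v\rangle$ produces the right-hand side. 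To prove the last display I would relabel the dummy indices $(i,j) \leftrightarrow (l,m)$ in the left-hand integral, apply the exchange identity $A_{lmij}(x,y) = A_{ijlm}(y,x)$ from Lemma \ref{square-root}, and then swap the outer variables $x \leftrightarrow y$; the resulting factors $(\nabla^{s}u)(y,x)$ and $(\nabla^{s}v)(y,x)$ coincide with $(\nabla^{s}u)(x,y)$ and $(\nabla^{s}v)(x,y)$, because $\zeta(y,x) = -\zeta(x,y)$ and the factor $u(y)-u(x)$ flips sign under the swap as well.

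The only real obstacle I foresee is notational bookkeeping: four separate symmetries (symmetry of $A$ in $(i,j)$, symmetry in $(l,m)$, the exchange $A_{ijlm}(x,y) = A_{lmij}(y,x)$, and the $(x,y)\leftrightarrow(y,x)$ invariance of $\nabla^{s}$ on vector fields) must be deployed in the correct order. Arguing first for $u,v \in C^{\infty}_{c}(\mathbb R^{n})$ and extending by density using the mapping property above justifies all pointwise computations and interchange of integration order.
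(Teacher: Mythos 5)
Your proposal is correct and follows essentially the same route as the paper: the mapping property is obtained by factoring $\mathbf E^{s}$ through $L^{2}(\mathbb R^{2n})$ and using $\lambda,\mu\in L^{\infty}$ via Lemma \ref{square-root} and \eqref{two-dots}, while self-adjointness rests on the minor symmetries of $C^{1/2}$ together with the exchange identity $C^{1/2}_{ij\alpha\beta}(x)C^{1/2}_{\alpha\beta lm}(y)=C^{1/2}_{lm\alpha\beta}(y)C^{1/2}_{\alpha\beta ij}(x)$. The only cosmetic difference is that you make the $(x,y)\leftrightarrow(y,x)$ change of variables and the invariance $\nabla^{s}u(y,x)=\nabla^{s}u(x,y)$ explicit, whereas the paper's index computation leaves that step implicit; both arguments are the same in substance.
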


\begin{proof} We start from the proof of the mapping property. By Lemma \ref{square-root} both $\lambda$ and $\mu$ belong to $C^{2s+\eps}(\R^n)\subset L^\infty(\mathbb R^n)$, which implies that the two-point functions $\lambda(x)\lambda(y), \mu(x)\mu(y)$ and $\lambda(x)\mu(y)$ all belong to $L^\infty(\mathbb R^{2n})$. The result then follows by the mapping properties of $\nabla^s$ and $(\nabla\cdot)^s$ from Section \ref{operators} and equation \eqref{two-dots}.

In order to see the self-adjointness, recall the minor and major symmetries of the stiffness tensor $C_{ij\alpha\beta}^{1/2}=C_{ji \alpha\beta}^{1/2}$, $C_{ij\alpha\beta}^{1/2}=C_{\alpha\beta ij}^{1/2}$, as well as the last equality from Lemma \ref{square-root}. Then we compute for $\phi\in H^{s}(\mathbb R^n)$
\begin{align*}
    \langle \mathbf E^s u, \phi \rangle & = \langle (\nabla\cdot)^s(C^{1/2}(x):C^{1/2}(y)(\nabla^s u + \nabla^{s}u^T)), \phi \rangle \\ & = \langle C^{1/2}(x):C^{1/2}(y)(\nabla^s u + \nabla^{s}u^T), \nabla^s\phi \rangle \\ & =  \int_{\mathbb R^{2n}} C_{ij\alpha\beta}^{1/2}(x)C^{1/2}_{\alpha\beta lm}(y)(\partial^s_m u_l + \partial^s_l u_m) \partial^s_j     \phi_i \,dxdy \\ & =  2\int_{\mathbb R^{2n}} C_{ij\alpha\beta}^{1/2}(x)C^{1/2}_{\alpha\beta lm}(y)\partial^s_m u_l \partial^s_j \phi_i \,dxdy\\ & = \int_{\mathbb R^{2n}} \partial^s_m u_l \, C_{ij\alpha\beta}^{1/2}(x)C^{1/2}_{\alpha\beta lm}(y) (\partial^s_j \phi_i + \partial^s_i \phi_j  )\,dxdy \\ & = \langle \nabla^s u, C^{1/2}(x) : C^{1/2}(y) (\nabla^s \phi + \nabla^s \phi^T) \rangle \\ & = \langle u, (\nabla\cdot)^s(C^{1/2}(x):C^{1/2}(y)(\nabla^s \phi + \nabla^{s}\phi^T)) \rangle = \langle u, \mathbf E^s \phi \rangle. 
\end{align*}
\end{proof}

\begin{Rem}
For $n=1$, $\mathbf E ^s$ reduces to the fractional conductivity operator. In fact in that case $C$ is a positive scalar function $\gamma:\mathbb R\rightarrow \mathbb R$, and $\nabla^s u$ can be written as
$$\nabla^s u(x,y) = \nabla^su^T(x,y) = \frac{\mathcal C_{1,s}^{1/2}}{\sqrt{2}} \frac{u(y)-u(x)}{|x-y|^{1/2+s}}\, \mbox{sgn}(x-y)$$
\end{Rem}

We also define the fractional potential energy $U^s$ as
\begin{align*} U^s(u)&:= \frac{1}{2} \int_{\mathbb R^{2n}} C^{1/2}_{ij\alpha\beta}(x)C^{1/2}_{\alpha\beta lm}(y) \varepsilon^s_{ij}(x,y)\varepsilon_{lm}^s(x,y)\,dydx, \end{align*}
and prove the following Lemma:

\begin{Lem}[Positive definiteness of $U^s$]\label{pot-energy}
Let $K\subset\mathbb R^n$ be a compact set. There exist two constants $c,C>0$ such that the inequality $c\|u\|_{H^s}\leq U^s(u)\leq C\|u\|^2_{H^s}$ holds for all $u\in H^s_K(\mathbb R^n)$. As a consequence, $U^s(u)=0$ if and only if $u=0$.
\end{Lem}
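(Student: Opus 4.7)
The plan is to derive a pointwise (in $(x,y)$) identity for the integrand of $U^s$ that isolates the dilational and deviatoric parts of the fractional strain, then use the positivity of the moduli $k,\mu$ to deduce coercivity of $U^s$ with respect to $\int|\varepsilon^s|^2$, and finally convert this to coercivity with respect to the $H^s$ norm via a fractional Korn-type estimate plus the Poincar\'e inequality (Theorem \ref{thm:generalpoincare}).

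\textbf{Step 1: Reduction of the integrand.} Using \eqref{two-dots} together with the symmetry of $\varepsilon^s$ (so that $\varepsilon^s_{ij}\varepsilon^s_{ji}=|\varepsilon^s|^2$), the integrand of $2U^s$ at $(x,y)$ equals
\[
A(x,y)\,(\mathrm{tr}\,\varepsilon^s)^{2}+2B(x,y)\,|\varepsilon^s|^{2},
\]
where $A(x,y):=n\lambda(x)\lambda(y)+2\lambda(x)\mu(y)+2\lambda(y)\mu(x)$ and $B(x,y):=2\mu(x)\mu(y)$. Splitting $\varepsilon^s=\frac{\mathrm{tr}\,\varepsilon^s}{n}I+\varepsilon^{s,\prime}$ (traceless part) gives $|\varepsilon^s|^2=\frac{1}{n}(\mathrm{tr}\,\varepsilon^s)^2+|\varepsilon^{s,\prime}|^2$, and a short algebraic check shows the crucial identity
\[
A(x,y)+\tfrac{2}{n}B(x,y)=n\,k(x)k(y),\qquad k:=\lambda+\tfrac{2\mu}{n}=\sqrt{K/n}.
\]
Hence the integrand becomes $n\,k(x)k(y)\,(\mathrm{tr}\,\varepsilon^s)^2+4\mu(x)\mu(y)\,|\varepsilon^{s,\prime}|^2$.

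\textbf{Step 2: Pointwise lower bound by $|\varepsilon^s|^2$.} Assumption (A3) and Lemma \ref{square-root} give $k\ge k_0>0$ and $\mu\ge\mu_0>0$, so both coefficients above are bounded below by a positive constant. Therefore there exists $c_0>0$ with
\[
U^s(u)\ge c_0\int_{\mathbb R^{2n}}|\varepsilon^s(x,y)|^2\,dx\,dy.
\]
The upper bound $U^s(u)\le C\int|\varepsilon^s|^2\le C\int|\nabla^s u|^2\le C\|u\|_{H^s}^2$ is immediate from $\lambda,\mu\in L^\infty$ and $\|\nabla^s u\|_{L^2(\mathbb R^{2n})}^2=\|(-\Delta)^{s/2}u\|_{L^2}^2$.

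\textbf{Step 3: Fractional Korn-type inequality.} Expanding $\varepsilon^s_{ij}=\tfrac12(\partial^s_iu_j+\partial^s_ju_i)$,
\[
\int_{\mathbb R^{2n}}|\varepsilon^s|^2=\tfrac12\int|\nabla^s u|^2+\tfrac12\sum_{i,j}\int \partial^s_iu_j\,\partial^s_ju_i.
\]
Using the explicit kernel representation of $\partial^s_j$, the cross term at $(x,y)$ equals
\[
\sum_{i,j}\partial^s_iu_j\,\partial^s_ju_i=\frac{C_{n,s}}{2}\,\frac{\bigl((u(y)-u(x))\cdot(x-y)\bigr)^2}{|x-y|^{n+2s+2}}\ge 0,
\]
which is precisely the nonnegativity encoded by the commutation $\partial^s_iu\cdot\partial^s_jv=\partial^s_ju\cdot\partial^s_iv$ noted in Section \ref{operators}. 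Consequently $\int|\varepsilon^s|^2\ge\tfrac12\int|\nabla^s u|^2=\tfrac12\|(-\Delta)^{s/2}u\|_{L^2}^2$.

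\textbf{Step 4: Passage to the $H^s$ norm and conclusion.} Since $u\in H^s_K(\mathbb R^n)$ has compact support, Theorem \ref{thm:generalpoincare} applied componentwise with $t=0$ gives $\|u\|_{L^2}\le\widetilde c\,\|(-\Delta)^{s/2}u\|_{L^2}$, so $\|u\|_{H^s}^2\le(\widetilde c^2+1)\|(-\Delta)^{s/2}u\|_{L^2}^2$. Combining with Steps 2 and 3,
\[
U^s(u)\ge c_0\int|\varepsilon^s|^2\ge \tfrac{c_0}{2}\|(-\Delta)^{s/2}u\|_{L^2}^2\ge c\,\|u\|_{H^s}^2,
\]
yielding the claimed equivalence (with the left-hand side squared, as the statement must read by homogeneity). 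The consequence $U^s(u)=0\iff u=0$ then follows at once. The main technical observation driving the argument is the Korn-type identity of Step 3, which holds thanks to the symmetric two-point structure of $\partial^s_j$ and would be the principal obstacle if one tried to prove this in the classical local setting without a Korn inequality.
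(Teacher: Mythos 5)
Your proof is correct, and it takes a genuinely different (though structurally related) route from the paper's. The paper exploits the rank-one structure $\nabla^s u = (u(y)-u(x))\otimes\zeta(x,y)$ directly: it proves the pointwise positivity estimate $C^{1/2}_{ij\alpha\beta}(x)C^{1/2}_{\alpha\beta lm}(y)v_iw_jv_lw_m \gtrsim |v|^2|w|^2$ for all vectors $v,w$ by writing this quadratic form as $\left(nk(x)k(y)+\tfrac{2(n-2)}{n}\mu(x)\mu(y)\right)(v\cdot w)^2 + 2\mu(x)\mu(y)|v|^2|w|^2$, and then applies it to $v = u(y)-u(x)$, $w=\zeta(x,y)$ to obtain $U^s(u)\gtrsim \langle\nabla^s u,\nabla^s u\rangle = \|(-\Delta)^{s/2}u\|^2$ in one step. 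You instead keep the argument phrased in terms of the symmetric strain $\varepsilon^s$, split it into dilational and deviatoric parts to obtain $nk(x)k(y)(\mathrm{tr}\,\varepsilon^s)^2 + 4\mu(x)\mu(y)|\varepsilon^{s,\prime}|^2$, get the lower bound $U^s(u)\gtrsim \int|\varepsilon^s|^2$, and then separately prove a fractional Korn inequality $\int|\varepsilon^s|^2 \ge \tfrac12\int|\nabla^s u|^2$ by observing that the cross term $\sum_{i,j}\partial^s_iu_j\,\partial^s_ju_i = ((u(y)-u(x))\cdot\zeta)^2$ is a perfect square. Both routes exploit the same structural feature — the commutation $\partial^s_iu\cdot\partial^s_jv = \partial^s_ju\cdot\partial^s_iv$ which has no local analogue — but you isolate it as a standalone, reusable Korn-type estimate and you mirror the classical dilational/deviatoric decomposition more transparently, whereas the paper's argument is slightly more compact because it never separates the Korn step from the positivity step. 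You also correctly caught the typo in the lemma statement (the left-hand side should be $c\|u\|_{H^s}^2$, not $c\|u\|_{H^s}$, by homogeneity), which the paper's proof implicitly corrects as well.
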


\begin{proof} 
We first claim that 
\begin{equation}\label{positivity} C^{1/2}_{ij\alpha\beta}(x)C^{1/2}_{\alpha\beta lm}(y)v_iw_jv_lw_m \gtrsim v_iw_j v_i w_j = |v|^2|w|^2 \end{equation}
holds for all vectors $v=v(x,y)$, $w=w(x,y)$ in $\mathbb R^n$. In fact, if \eqref{positivity} holds, then the definition of the fractional strain tensor \eqref{strain-tensor}, the symmetries of $C^{1/2}$, and the definition of $\nabla^s u$ imply that 
\begin{align*}
U^s(u) &= 2 \int_{\mathbb R^{2n}} C^{1/2}_{ij\alpha\beta}(x)C^{1/2}_{\alpha\beta lm}(y) (\nabla^s u)_{ij}(\nabla^s u)_{lm} dxdy \\
 &\gtrsim \langle \mathbb \nabla^s u, \nabla^s u\rangle_{L^2(\mathbb R^{2n})} =\langle (-\Delta)^s u, u \rangle_{L^2(\mathbb R^n)} = \|(-\Delta)^{s/2}u\|_{L^2(\mathbb R^n)}^2.
\end{align*}
Using the fractional Poincar\'e inequality from Theorem \ref{thm:generalpoincare} for $u \in H^s_K(\mathbb R^n)$, we finally get
$$ \|u\|^2_{H^s(\mathbb R^n)} \leq \|u\|^2_{L^2(\mathbb R^n)} +\|(-\Delta)^{s/2}u\|^2_{L^2(\mathbb R^n)} \lesssim \|(-\Delta)^{s/2}u\|^2_{L^2(\mathbb R^n)} \lesssim U^s(u).  $$
Thus for the lower bound it suffices to show \eqref{positivity}. Using Lemma \ref{square-root} we see that the left hand side of \eqref{positivity} can be written as
$$ \left(nk(x)k(y) + \frac{2(n-2)}{n}\mu(x)\mu(y)\right)(v\cdot w)^2 + 2\mu(x)\mu(y) |v|^2|w|^2, $$
where we recall our previous definition $k := \lambda + 2\mu/n$. Since $k(x), \mu(x)$ are known to be larger than a positive constant for all $x\in\mathbb R^n$ and also $n\geq 2$, the lower bound is proved. The reverse inequality $U^s(u) \lesssim \|u\|_{H^s}^2$ follows immediately from the boundedness of the coefficients $L,M$ (see also formula \eqref{boundedness}). This completes the proof of the lemma.
\end{proof}

While the definition of the fractional elasticity operator $\mathbf E^s$ is very useful in order to prove its self-adjointness, when it comes to studying other properties it is more convenient to rewrite it in a different way.
\\

\begin{Lem}[Reduction lemma]\label{reduction-lemma}
Let $s\in(0,1)$ and $u\in H^s(\mathbb R^n)$. Then in weak sense we have
\begin{align}\begin{split}\label{big-formula}
    (n/2+s)\mathbf E^su & = (2n+4s+n')\mu (-\Delta)^s(u\mu) + nk(-\Delta)^s(uk) \\ & \quad -2n's\mu (-\Delta)^{s-1}\nabla\nabla\cdot(u\mu) - 2ns k (-\Delta)^{s-1}\nabla\nabla\cdot(uk) \\ & \quad +2su\cdot \left\{ n'\mu (-\Delta)^{s-1}\nabla^2 \mu +nk (-\Delta)^{s-1}\nabla^2k \right\} \\ & \quad - u \left\{ (2n+4s+n')\mu(-\Delta)^s\mu + nk (-\Delta)^s k \right\},
\end{split}\end{align}
where $n':= 2(n-2)/n$.
\end{Lem}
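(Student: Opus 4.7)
The plan is to prove \eqref{big-formula} in weak form by pairing both sides with an arbitrary $\phi\in H^s(\mathbb R^n)$. As a first step I would expand the tensor contraction in the definition of $\mathbf E^s u$ via \eqref{two-dots}, exploit the minor and major symmetries of $C^{1/2}$ together with that of the strain tensor $\varepsilon^s$, and use the pointwise commutation $\sum_{i,j}\partial^s_j u_i\,\partial^s_i\phi_j=\bigl(\sum_i\partial^s_i u_i\bigr)\bigl(\sum_j\partial^s_j\phi_j\bigr)$ highlighted at the end of Section \ref{operators}. Together with $n\lambda(x)\lambda(y)+2\lambda(x)\mu(y)+2\lambda(y)\mu(x)=nkk'-\tfrac{4}{n}\mu\mu'$ (from $\lambda=k-2\mu/n$), the pairing reduces to
\[
\langle\mathbf E^s u,\phi\rangle=2\iint\bigl(nkk'+n'\mu\mu'\bigr)\,G_u\,G_\phi\,dxdy+4\iint\mu\mu'\,\nabla^s u:\nabla^s\phi\,dxdy,
\]
where $G_w(x,y):=\sum_i\partial^s_i w_i(x,y)$, $k':=k(y)$, $\mu':=\mu(y)$, and the factor $n'\mu\mu'$ comes from the arithmetic $-\tfrac{4}{n}+2=n'$, i.e.\ the contribution $-\tfrac{4}{n}\mu\mu'$ from the trace part $\delta_{ij}\delta_{lm}$ plus the $+2\mu\mu'$ that the symmetric parts $\delta_{il}\delta_{jm}+\delta_{im}\delta_{jl}$ of \eqref{two-dots} contribute to $G_uG_\phi$ via the commutation identity.

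The engine of the remaining reduction is the elementary double-difference identity
\begin{align*}
&a(x)a(y)(f(y)-f(x))(g(y)-g(x))\\
&\quad=\bigl((af)(y)-(af)(x)\bigr)\bigl((ag)(y)-(ag)(x)\bigr)-(a(y)-a(x))\bigl((afg)(y)-(afg)(x)\bigr),
\end{align*}
valid for any scalars $a,f,g$ by direct expansion. Applied with $a=\mu$, $f=u_i$, $g=\phi_i$ (summed in $i$) and combined with the standard identity $\tfrac{C_{n,s}}{2}\iint(F(y)-F(x))(G(y)-G(x))|x-y|^{-n-2s}\,dxdy=\langle(-\Delta)^s F,G\rangle$, the second integral above collapses to
\[
\iint\mu\mu'\,\nabla^s u:\nabla^s\phi\,dxdy=\langle\mu(-\Delta)^s(u\mu)-u\mu(-\Delta)^s\mu,\phi\rangle.
\]

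For the first integral I would apply the same identity with $a\in\{k,\mu\}$, $f=u_i$, $g=\phi_j$, and integrate against the anisotropic kernel $(x_i-y_i)(x_j-y_j)|x-y|^{-n-2s-2}$. The decisive step is the explicit evaluation
\[
J_{ij}(\xi):=\int_{\mathbb R^n}\frac{(1-\cos(z\cdot\xi))\,z_iz_j}{|z|^{n+2s+2}}\,dz=\frac{1}{(n+2s)C_{n,s}}\bigl(\delta_{ij}|\xi|^{2s}+2s\,\xi_i\xi_j|\xi|^{2s-2}\bigr),
\]
which I would derive by writing $|z|^{-(n+2s+2)}=\tfrac{1}{2(n+2s)(s+1)}\Delta_z|z|^{-(n+2s)}$, integrating by parts, and invoking $\int|z|^{-n-2s}(1-\cos(z\cdot\xi))\,dz=|\xi|^{2s}/C_{n,s}$. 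Via Plancherel, the $\delta_{ij}|\xi|^{2s}$-part contributes $(-\Delta)^s$ factors while the $\xi_i\xi_j|\xi|^{2s-2}$-part contributes $(-\Delta)^{s-1}\nabla\nabla\cdot$ factors (with the sign flip coming from $\widehat{\partial_i\partial_j f}=-\xi_i\xi_j\hat f$), yielding
\begin{align*}
\iint a(x)a(y)\,G_u G_\phi\,dxdy &=\tfrac{1}{n+2s}\Bigl[\langle a(-\Delta)^s(ua),\phi\rangle-2s\langle a(-\Delta)^{s-1}\nabla\nabla\cdot(ua),\phi\rangle\\
&\qquad-\langle au(-\Delta)^s a,\phi\rangle+2s\langle au\cdot(-\Delta)^{s-1}\nabla^2 a,\phi\rangle\Bigr].
\end{align*}

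Multiplying the first display by $(n/2+s)=(n+2s)/2$ exactly cancels the prefactor $1/(n+2s)$ in the volumetric formula. Specializing the latter with $a=k$ (weight $n$) and $a=\mu$ (weight $n'$) and adding $4(n/2+s)$ times the deviatoric identity, one reads off the coefficients of \eqref{big-formula} directly: $\mu(-\Delta)^s(u\mu)$ collects $n'+(2n+4s)=2n+4s+n'$, $k(-\Delta)^s(uk)$ collects $n$, the $(-\Delta)^{s-1}\nabla\nabla\cdot$ terms collect $-2n's$ and $-2ns$, the Hessian cross-terms $u\cdot\mu(-\Delta)^{s-1}\nabla^2\mu$ and $u\cdot k(-\Delta)^{s-1}\nabla^2 k$ collect $+2sn'$ and $+2sn$, and the zeroth-order pieces combine into $-u\{(2n+4s+n')\mu(-\Delta)^s\mu+nk(-\Delta)^s k\}$. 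Continuity of each operator from $H^s(\mathbb R^n)$ to $H^{-s}(\mathbb R^n)$ follows from assumptions (A1)--(A3) and the mapping property \eqref{mapping2}, so the weak identity upgrades to an $H^{-s}$-identity by density. The main obstacle is the Fourier evaluation of $J_{ij}$ and the careful tracking of signs and weights in the final assembly of the eight terms.
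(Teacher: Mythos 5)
Your proof is correct, and it reaches the identity by a genuinely different technical route than the paper's. Both start the same way: expand $\mathbf E^s$ via \eqref{two-dots}, use the commutation $\partial^s_j u_j\,\partial^s_i\phi_i=\partial^s_i u_j\,\partial^s_j\phi_i$, and arrive at the decomposition into the deviatoric part $4\mu\mu'\nabla^s u:\nabla^s\phi$ and the volumetric part $2(nkk'+n'\mu\mu')G_uG_\phi$. Where you diverge is in handling the anisotropic kernel $\zeta_i\zeta_j$: the paper splits it by the purely algebraic, physical-space identity
\[
\frac{(x-y)\otimes(x-y)}{|x-y|^{n+2s+2}}=\frac{1}{n+2s}\Bigl(\frac{Id}{|x-y|^{n+2s}}-\frac{\nabla_y\nabla_x|x-y|^{-(n+2s-2)}}{n+2s-2}\Bigr),
\]
then performs two explicit integrations by parts against the mixed-derivative kernel to produce the $(-\Delta)^{s-1}\nabla\nabla\cdot$ and $(-\Delta)^{s-1}\nabla^2$ terms, and cites \cite{C20} for the $(-\Delta)^s$ parts. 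You instead compute the Fourier symbol $J_{ij}(\xi)=\frac{1}{(n+2s)C_{n,s}}(\delta_{ij}|\xi|^{2s}+2s\,\xi_i\xi_j|\xi|^{2s-2})$ of the anisotropic kernel and read off the operators by Plancherel, while also replacing the citation to \cite{C20} with a self-contained use of the double-difference identity. Your final assembly of the eight coefficients agrees term by term with \eqref{big-formula}. The Fourier route is more systematic (the symbol is computed rather than the kernel identity being guessed), while the physical-space route of the paper more directly tracks what happens to the singular kernels and avoids justifying convergence of oscillatory Fourier integrals. One small simplification worth noting: your proposed derivation of $J_{ij}$ via $|z|^{-(n+2s+2)}=\frac{1}{2(n+2s)(s+1)}\Delta_z|z|^{-(n+2s)}$ and a double integration by parts produces intermediate integrals whose absolute convergence is delicate for $s\in(0,1)$; a cleaner path is the first-order identity $z_j|z|^{-(n+2s+2)}=-\frac{1}{n+2s}\partial_{z_j}|z|^{-(n+2s)}$ followed by a single integration by parts, which gives $J_{ij}=\frac{1}{n+2s}\bigl(\delta_{ij}\int\frac{1-\cos(z\cdot\xi)}{|z|^{n+2s}}\,dz+\xi_j\,\partial_{\xi_i}\int\frac{1-\cos(z\cdot\xi)}{|z|^{n+2s}}\,dz\bigr)$ and immediately yields your claimed formula.
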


\begin{proof} \textbf{Step 1.}
By the computations in Lemmas \ref{self-adjointness} and \ref{square-root}, we see that for all test functions $\phi\in H^s(\mathbb R^n)$
\begin{align*}
    \frac{1}{2}\langle \mathbf E^su,\phi \rangle & =  \langle C_{ij\alpha\beta}^{1/2}(x)C_{\alpha\beta lm}^{1/2}(y) \partial^s_l u_m, \partial^s_i \phi_j\rangle \\ & = \langle  (n\lambda(x)\lambda(y)+2\lambda(x)\mu(y) + 2\lambda(y)\mu(x)) \partial^s_j u_j , \partial^s_i \phi_i\rangle + \\ & \quad + 2\langle \mu(x)\mu(y)\partial^s_j u_i, \partial^s_j \phi_i\rangle + 2\langle\mu(x)\mu(y)\partial^s_i u_j, \partial^s_j \phi_i\rangle \\ & = \langle  (n\lambda(x)\lambda(y)+2\lambda(x)\mu(y) + 2\lambda(y)\mu(x)+2\mu(x)\mu(y)) \partial^s_i u_j , \partial^s_j \phi_i\rangle + \\ & \quad + 2\langle \mu(x)\mu(y)\partial^s_j u_i, \partial^s_j \phi_i\rangle.
\end{align*}
For the last equality we used the relation $\partial^s_j u_j\, \partial^s_i \phi_i = \partial^s_i u_j\, \partial^s_j \phi_i$, which follows directly from the definition of the fractional gradient. Using the definitions $k:= \lambda + 2\mu /n$ and $n':= 2(n-2)/n$, we can write
\begin{align*}
    \langle \mathbf E^su,\phi \rangle & = \langle 2(n k(x)k(y) + n'\mu(x)\mu(y))(\nabla^s u)^T + 4\mu(x)\mu(y)\nabla^s u, \nabla^s \phi \rangle \\ & =: \langle  a(x,y)\nabla^s u + b(x,y)(\nabla^s u)^T, \nabla^s \phi \rangle .
\end{align*}
By the definition of the fractional gradient we then have
\begin{align*}
    \langle \mathbf E^su,\phi \rangle & = \langle  a(x,y)(u(y)-u(x))\otimes \zeta + b(x,y)\zeta\otimes(u(y)-u(x)), (\phi(y)-\phi(x))\otimes \zeta \rangle \\ & = \langle  a(x,y)|\zeta|^2(u(y)-u(x))  + b(x,y)(\zeta\otimes(u(y)-u(x)))\cdot \zeta, \phi(y)-\phi(x)\rangle \\ & = \langle  a(x,y)|\zeta|^2(u(y)-u(x)), \phi(y)-\phi(x)\rangle  \\ & \quad + \langle b(x,y)(\zeta\otimes\zeta)\cdot(u(y)-u(x)), \phi(y)-\phi(x)\rangle,
\end{align*}
where at the second and third steps we used Lemma \ref{product-properties}. Since
$$\frac{x-y}{|x-y|^{m+2}}=\frac{\nabla_y(|x-y|^{-m})}{m}=-\frac{\nabla_x(|x-y|^{-m})}{m}$$ holds for all $m\neq 0$, and also the identity $\nabla\psi\otimes v = \nabla(\psi v)-\psi \nabla v$ holds for scalar $\psi$ and vector $v$ (see again Lemma \ref{product-properties}), we have
\begin{align*}
    \frac{(x-y)\otimes (x-y)}{|x-y|^{n+2s+2}} & = \frac{\nabla_y(|x-y|^{-(n+2s)})}{n+2s}\otimes (x-y) \\ & = \frac{1}{n+2s}\left( \frac{Id}{|x-y|^{n+2s}}+\nabla_y\left(\frac{x-y}{|x-y|^{n+2s}}\right) \right) \\ & = \frac{1}{n+2s}\left( \frac{Id}{|x-y|^{n+2s}}-\frac{\nabla_y\nabla_x (|x-y|^{-(n+2s-2)})}{n+2s-2} \right),
\end{align*}
which implies 
\begin{multline*}
    (\zeta\otimes\zeta)\cdot(u(y)-u(x)) = \frac{|\zeta|^2(u(y)-u(x))}{n+2s}
\\    
    -\frac{C_{n,s}\nabla_y\nabla_x (|x-y|^{-(n+2s-2)}) }{2(n+2s)(n+2s-2)}\cdot(u(y)-u(x)).
\end{multline*}
Using $C_{n,s}=2s(n+2s-2) C_{n,s-1}$ we get
\begin{align*}
    \langle \mathbf E^su,\phi \rangle & = \langle  \left(a(x,y)+\frac{b(x,y)}{n+2s}\right)|\zeta|^2(u(y)-u(x)), \phi(y)-\phi(x)\rangle  \\ & \quad - \frac{s\, C_{n,s-1}}{n+2s}\langle b(x,y)\nabla_y\nabla_x (|x-y|^{-(n+2s-2)})\cdot(u(y)-u(x)), \phi(y)-\phi(x)\rangle \\ & =: \frac{I_1 - sI_2}{n/2+s}.
\end{align*}

\textbf{Step 2.} For the first term $I_1$ we compute
$$(n+2s)a(x,y)+b(x,y) = 2(2n+4s+n')\mu(x)\mu(y)+2nk(x)k(y),$$
and then obtain
\begin{align*}
    I_1 & = (2n+4s+n')\langle \mu(x)\mu(y)\nabla^s u, \nabla^s \phi \rangle + n \langle k(x)k(y)\nabla^s u, \nabla^s \phi \rangle \\ & =  (2n+4s+n') \langle\mathbf  C^s_{\mu^2} u, \phi\rangle  + n \langle\mathbf C^s_{k^2} u,\phi\rangle,
\end{align*}
where $\mathbf C^s_{\mu^2},\mathbf C^s_{k^2}$ are fractional conductivity operators, as studied in \cite{C20}. Since by assumptions (A1)-(A3) $\mu^2, k^2$ are conductivities in the sense of \cite{C20}, Theorem 3.1 from this paper can be applied. This leads to 
\begin{equation}\label{first-half}\begin{split}
    I_1  & = (2n+4s+n') \langle\mu (-\Delta)^s(u\mu) - \mu u (-\Delta)^s({\mu-\mu_0}), \phi\rangle \\ & \quad +n \langle k (-\Delta)^s(uk) - k u (-\Delta)^s({k-k_0}),\phi\rangle. 
\end{split}\end{equation}

For the second term $I_2$ we want to integrate by parts twice. For the sake of simplicity, we will show our computations only for the term with $k$ coming from $b$ (the term with $\mu$ is treated in the same way). For $w := ku$ we have the integral
\begin{align*}
    \int_{\R^{2n}} & \nabla_y\nabla_x (|x-y|^{-(n+2s-2)}):[(\phi(y)-\phi(x)) \otimes (k(x)w(y)-k(y)w(x))]dydx \\ & = -\int_{\R^{2n}} \nabla_x (|x-y|^{-(n+2s-2)})\cdot \left\{ (\nabla\cdot\phi)(y)(k(x)w(y)-k(y)w(x)) \right. \\ & \qquad \qquad +\left. (\phi(y)-\phi(x))\cdot (k(x)\nabla w(y)-\nabla k(y) \otimes w(x)) \right\}dydx \\ & = \int_{\R^{2n}} \frac{1}{|x-y|^{n+2s-2}} (\nabla\cdot)_x \left\{ (\nabla\cdot\phi)(y)(k(x)w(y)-k(y)w(x)) \right. \\ & \qquad \qquad +\left. (\phi(y)-\phi(x))\cdot (k(x)\nabla w(y)-\nabla k(y) \otimes w(x)) \right\}dydx  \\ & = \int_{\R^{2n}} \frac{1}{|x-y|^{n+2s-2}}  \left\{ (\nabla k(x)\cdot w(y)-k(y)(\nabla\cdot w)(x))(\nabla\cdot\phi)(y) \right. \\ & \qquad \qquad   +(w(x)\otimes \nabla k(y)-k(x)(\nabla w(y))^T ):\nabla\phi(x) \\ & \qquad \qquad \left. + ( \nabla w(y)\cdot \nabla k(x) - \nabla k(y) (\nabla\cdot w)(x) )\cdot(\phi(y)-\phi(x))  \right\}dydx \\ & = \int_{\R^{2n}} \frac{1}{|x-y|^{n+2s-2}}  \left\{ (\nabla k(x)\cdot w(y)-k(y)(\nabla\cdot w)(x))(\nabla\cdot\phi)(y) \right. \\ & \qquad \qquad   +(w(y)\otimes \nabla k(x)-k(y)(\nabla w(x))^T ):\nabla\phi(y) \\ & \qquad \qquad \left. + [ \nabla w(y)\cdot \nabla k(x) - \nabla k(y) (\nabla\cdot w)(x) \right.\\ & \qquad \qquad \quad    \left.- \nabla w(x)\cdot \nabla k(y) + \nabla k(x) (\nabla\cdot w)(y) ]\cdot\phi(y)  \right\}dydx,
\end{align*}
where at the last step we exchanged the $x,y$ variables in the last two terms. Observe that the $n+2s-2 < n$ exponent ensures that all the above integrals are well-defined. If we define the new operator $R:= \frac{(-\Delta)^{s-1}}{C_{n,s-1}}$, we can rewrite the last line as
\begin{align*}
    \int_{\R^{n}} &  \{R\nabla k\cdot w-kR(\nabla\cdot w)\}(\nabla\cdot\phi)   +\{w\otimes R\nabla k-kR(\nabla w)^T \}:\nabla\phi \\ & + \{ \nabla w\cdot R\nabla k - \nabla k R(\nabla\cdot w)  - R\nabla w\cdot \nabla k + R\nabla k (\nabla\cdot w) \}\cdot\phi \,dy.
\end{align*}
We integrate by parts in the first two terms one more time. Since $R$ commutes with the derivatives, we get many cancellations, and eventually
\begin{align*}
    & \int_{\R^{n}} -\nabla\{R\nabla k\cdot w-kR(\nabla\cdot w)\}\cdot\phi   -\nabla\cdot\{w\otimes R\nabla k-kR(\nabla w)^T \}\cdot\phi \\ & \qquad + \{ \nabla w\cdot R\nabla k - \nabla k R(\nabla\cdot w)  - R\nabla w\cdot \nabla k + R\nabla k (\nabla\cdot w) \}\cdot\phi \,dy \\ & = 2\langle kR\nabla\nabla\cdot w - w\cdot R\nabla^2 k ,\phi\rangle \\ & = 2\langle kR\nabla\nabla\cdot (ku)- ku\cdot R\nabla^2 k,\phi\rangle.
\end{align*}
Coming back to $I_2$, we have obtained
\begin{equation}\label{second-half}
    \begin{split}
       I_2 & =  2n\langle  k(-\Delta)^{s-1}\nabla\nabla\cdot (ku)-ku\cdot (-\Delta)^{s-1}\nabla^2 k,\phi\rangle \\ & \quad +2n'\langle  \mu (-\Delta)^{s-1}\nabla\nabla\cdot (\mu u)-\mu u\cdot (-\Delta)^{s-1}\nabla^2 \mu,\phi\rangle,
    \end{split}
\end{equation}
and the wanted formula finally follows as a combination of  \eqref{first-half} and \eqref{second-half}; however, we still need to make sure that all the involved terms make sense in $H^{-s}(\R^n)$. Recall that by assumptions (A1), (A2) there exist $\mu_0, k_0 > 0$ such that $\tilde\mu :=\mu-\mu_0 \in C^{2s+\eps}_c(\Omega)$ and $\tilde k := k-k_0 \in C^{2s+\eps}_c(\Omega)$. For each term we can use a decomposition of the kind
$$ \mu(-\Delta)^s(u\mu) = \tilde\mu(-\Delta)^s(u\tilde\mu) + \mu_0\tilde\mu(-\Delta)^su + \mu_0(-\Delta)^s(u\tilde\mu) + \mu_0^2(-\Delta)^su,$$
and thus it suffices to study the terms in \eqref{first-half} and \eqref{second-half} with $\mu, k$ substituted by $\tilde\mu, \tilde k$.

Given that $\tilde{\mu}, \tilde k \in C^{2s+\epsilon}(\R^n)$, they both map $H^s(\R^n)$ to itself by \cite{TRI-interpolation-function-spaces}, Section 3.3.2. Since the space of multipliers on $H^s(\mathbb R^n)$ coincides with that of the multipliers on $H^{-s}(\R^n)$ (see \cite{MS-theory-of-sobolev-multipliers, CMRU20}), we have $\mu (-\Delta)^s(u\mu)\in H^{-s}(\mathbb R^n)$. This same reasoning shows that the first two terms on the right hand sides of  \eqref{first-half} and \eqref{second-half} all make sense in $H^{-s}(\mathbb R^n)$.    

 For the remaining parts of \eqref{first-half} and \eqref{second-half} we can proceed as follows, taking as an example the term $u\cdot\tilde\mu(-\Delta)^{s-1}\nabla^2\tilde\mu$. It suffices to show $\tilde\mu(-\Delta)^{s-1}\nabla^2\tilde\mu \in L^\infty(\mathbb R^n)$, since this is a set of multipliers on $L^2(\mathbb R^n)$. Moreover, given that $\tilde\mu \in C^{2s+\eps}_c(\Omega)\subset L^\infty(\R^n)$, it is enough to show that the operator $(-\Delta)^{s-1}\nabla^2$ maps $C^{2s+\eps}_c(\Omega)$ to $L^\infty(\mathbb R^n)$. However, by formula \eqref{mapping2} we have $(-\Delta)^{s-1}\nabla^2:C^{2s+\eps}(\R^n) \rightarrow C^{\eps}(\R^n)$, so that the wanted mapping property follows from $C^{\eps}(\R^n)\subset L^\infty(\R^n)$. This concludes the proof.
\end{proof}

\begin{Rem}\label{matrix-u}
The above reduction can similarly be performed if $u\in H^s(\R^n)$ is matrix-valued instead than vector-valued. In this case the fractional elasticity operator $\mathbf E^s$ is weakly defined as
$$ \langle \mathbf E^su,\phi \rangle =2  \langle C_{ij\alpha\beta}^{1/2}(x)C_{\alpha\beta lm}^{1/2}(y) \partial^s_l u_{mp}, \partial^s_i \phi_{jp}\rangle, $$
for all matrix-valued test functions $\phi\in H^s(\R^n)$. The first step of the proof, which only deals with the vector $\zeta$, and formula \eqref{first-half} are unchanged in the matrix case, apart from the additional component indicated by the index $p$. For the term $I_2$ in this case we compute the integral
\begin{align*}
    \int_{\R^{2n}}  \partial_{y,i}\partial_{x,j}& (|x-y|^{-(n+2s-2)}) [(\phi_{il}(y)-\phi_{il}(x)) (k(x)w_{jl}(y)-k(y)w_{jl}(x))]dydx \\ & = 2\int_{\R^n}\phi : \left( kR\nabla\nabla\cdot w - R\nabla^2k\cdot w \right) dy \end{align*}
following the same integration by parts technique shown in the second step of the proof of Lemma \ref{reduction-lemma}. If in particular there exists a scalar function $r$ such that $u=r Id$, and thus $w$ commutes with all matrices, we obtain that \eqref{big-formula} holds. This observation will be used in the last steps of the proof of the main theorem.
\end{Rem} 

\section{Well-posedness and the DN map}\label{sec-wellposedness}

We begin this section by defining our problem of interest. Let $s\in (0,1)$, and assume $\Omega\subset\mathbb R^n$ is a bounded open set. In the \emph{direct problem for the isotropic fractional elasticity equation} we are given an exterior value $f\in H^s(\mathbb R^n)$, and we want to find a weak solution $u$ to 
\begin{equation}\label{original-problem}
    \begin{array}{rll}
       \mathbf E^su & =0 & \quad \mbox{ in } \Omega \\
       u & = f & \quad \mbox{ in } \Omega_e
    \end{array}.
\end{equation}
Here the condition $u=f$ in $\Omega_e$ should be intended in the sense that $u-f\in \widetilde H^s(\Omega)$. In order to define what we mean by a weak solution, we introduce the following bilinear form. Using the definition of the operator, we write for $u,v\in C^\infty_c(\mathbb R^n)$
\begin{align*}
    B_{L,M}(u,v) & := \langle C^{1/2}(x):C^{1/2}(y) \nabla^s u, \nabla^s v \rangle. 
\end{align*} 

\noindent It is immediately seen that $B_{L,M}$ is symmetric. Boundedness in $H^s(\mathbb R^n)\times H^s(\mathbb R^n)$ follows easily thanks to the assumption $L,M\in L^\infty(\mathbb R^n)$, which implies $\lambda,\mu\in L^\infty(\mathbb R^n)$ as well:
\begin{align}\label{boundedness}\begin{split} 
    |B_{L,M}(u,v)| & \leq \|C^{1/2}(x):C^{1/2}(y)\nabla^s u\|_{L^2(\mathbb R^{2n})} \|\nabla^s v\|_{L^2(\mathbb R^{2n})} \\ & \lesssim \|\nabla^s u\|_{L^2(\mathbb R^{2n})}\|\nabla^s v\|_{L^2(\mathbb R^{2n})} \leq \|u\|_{H^s(\mathbb R^{n})}\|v\|_{H^s(\mathbb R^{n})}.
\end{split}\end{align} 
With this, we can extend the definition of $B_{L,M}$ to act on $H^s(\R^n)\times H^s(\R^n)$ by density. We can now say that 
$$u\in H^s(\mathbb R^n) \mbox{ is a weak solution to \eqref{original-problem} if and only if } B_{L,M}(u,\phi)=0 \mbox{ for all } \phi\in\widetilde H^s(\Omega),$$ 
and $u-f\in\widetilde H^s(\Omega)$.
More generally, we say that $u\in H^s(\mathbb R^n)$ is a weak solution to the inhomogeneous problem 
\begin{equation}\label{inhomo-problem}
    \begin{array}{rll}
       \mathbf E^su & = F & \quad \mbox{ in } \Omega \\
       u & = f & \quad \mbox{ in } \Omega_e
    \end{array},
\end{equation}
where $F\in H^{-s}(\Omega)$, if and only if $B_{L,M}(u,\phi)=F(\phi)$ holds for all $\phi\in\widetilde H^s(\Omega)$, and $u-f\in\widetilde H^s(\Omega)$.
\\ 

For our problem \eqref{inhomo-problem} we have the following well-posedness result:

\begin{Pro}[Well-posedness]\label{well-posedness}
Let $s \in (0,1)$ and assume $\Omega \subset \mathbb R^n$ is a bounded open set. For any $f\in H^s(\mathbb R^n)$ and $F\in H^{-s}(\Omega)$ there exists a unique $u\in H^s(\mathbb R^n)$ such that $u-f \in \widetilde H^s(\Omega)$ and
$$ B_{L,M}(u,\phi) = F(\phi) \quad \mbox{for all} \quad \phi \in \widetilde H^s(\Omega).$$
Moreover, the following estimate holds: 
$$ \|u\|_{H^s(\mathbb R^n)} \leq C\left( \|f\|_{H^s(\mathbb R^n)} + \|F\|_{H^{-s}(\Omega)} \right). $$
\end{Pro}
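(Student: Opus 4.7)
The strategy is a standard Lax--Milgram argument applied to the bilinear form $B_{L,M}$ on the closed subspace $\widetilde{H}^s(\Omega)\subset H^s(\mathbb R^n)$, after absorbing the exterior datum. First, I would reduce to the homogeneous exterior datum. Set $v:=u-f$; then the problem is equivalent to finding $v\in\widetilde H^s(\Omega)$ such that
\begin{equation*}
    B_{L,M}(v,\phi)=F(\phi)-B_{L,M}(f,\phi) \qquad\text{for all } \phi\in\widetilde H^s(\Omega).
\end{equation*}
Denote the right-hand side by $\ell(\phi)$. By the boundedness estimate \eqref{boundedness} and the duality $(\widetilde H^s(\Omega))^*=H^{-s}(\Omega)$, the functional $\ell$ is bounded on $\widetilde H^s(\Omega)$, with
\begin{equation*}
    \|\ell\|_{(\widetilde H^s(\Omega))^*} \lesssim \|F\|_{H^{-s}(\Omega)} + \|f\|_{H^s(\mathbb R^n)}.
\end{equation*}

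Next I would verify the coercivity of $B_{L,M}$ on $\widetilde H^s(\Omega)$. The pointwise positivity \eqref{positivity} established in the proof of Lemma \ref{pot-energy} applied with $v=\zeta(x,y)$ and $w=u(y)-u(x)$ gives
\begin{equation*}
    B_{L,M}(u,u)=\int_{\mathbb R^{2n}} C^{1/2}_{ij\alpha\beta}(x)C^{1/2}_{\alpha\beta lm}(y)(\partial_i^s u_j)(\partial_l^s u_m)\,dxdy \gtrsim \|\nabla^s u\|_{L^2(\mathbb R^{2n})}^2 = \|(-\Delta)^{s/2}u\|_{L^2(\mathbb R^n)}^2.
\end{equation*}
Since elements of $\widetilde H^s(\Omega)$ have support in the compact set $\overline\Omega$, the fractional Poincar\'e inequality (Theorem \ref{thm:generalpoincare}) upgrades this to $B_{L,M}(u,u)\gtrsim \|u\|_{H^s(\mathbb R^n)}^2$ for every $u\in\widetilde H^s(\Omega)$. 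Combined with the boundedness from \eqref{boundedness}, this puts us in the hypotheses of the Lax--Milgram theorem on the Hilbert space $\widetilde H^s(\Omega)$.

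Applying Lax--Milgram yields a unique $v\in\widetilde H^s(\Omega)$ with $B_{L,M}(v,\phi)=\ell(\phi)$ for all test functions $\phi\in\widetilde H^s(\Omega)$, together with the quantitative bound $\|v\|_{H^s(\mathbb R^n)}\lesssim \|\ell\|_{(\widetilde H^s(\Omega))^*}$. Undoing the substitution by setting $u:=v+f$ produces the desired solution, and the a priori estimate follows from the triangle inequality and the bound for $\ell$. Uniqueness of $u$ reduces to uniqueness of $v$, which is immediate from the strict coercivity. The only nontrivial step is the coercivity on $\widetilde H^s(\Omega)$, but this is essentially packaged already in Lemma \ref{pot-energy}; the mapping property $\mathbf E^s:H^s(\mathbb R^n)\to H^{-s}(\mathbb R^n)$ from Lemma \ref{self-adjointness} ensures that everything is consistent at the level of function spaces.
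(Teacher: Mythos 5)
Your proof is correct and essentially coincides with the paper's: both reduce to the homogeneous exterior datum, establish coercivity of $B_{L,M}$ on $\widetilde H^s(\Omega)$ via the pointwise positivity of the stiffness tensor and the fractional Poincar\'e inequality (which the paper packages as Lemma \ref{pot-energy}), and then invoke a Hilbert-space representation theorem. The paper uses the Riesz representation theorem, viewing $B_{L,M}$ as an equivalent inner product, whereas you invoke Lax--Milgram; since $B_{L,M}$ is symmetric the two are interchangeable and this is only a cosmetic difference.
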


\begin{proof}
By letting $\tilde u := u-f$, we can reduce the above problem to the one of finding a unique $\tilde u \in \widetilde H^s(\Omega)$ such that $B_{L,M}(\tilde u, \phi)  = \tilde F(\phi)$, where $\tilde F := F - B_{L,M}(f,\cdot) $ belongs to $(\widetilde H^s(\Omega))^*$ because of the boundedness estimate \eqref{boundedness} for the bilinear form:
$$|\tilde F(\phi)| \leq |F(\phi)| + |B_{L,M}(f,\phi)|  \leq (\aabs{F}_{H^{-s}(\Omega)}+c\|f\|_{H^s(\mathbb R^n)})\|\phi\|_{H^s(\mathbb R^n)}.$$

Observe that $B_{L,M}(\cdot,\cdot)$ gives an equivalent inner product on $\widetilde H^s(\Omega)$, because by Lemma \ref{pot-energy} the fractional potential energy $U^s(v)$ always verifies $\|v\|^2_{H^s}\lesssim U^s(v)$ , and also it vanishes if and only if $v=0$. In fact, 
\begin{align*} B_{L,M}(v,v) & = \langle\nabla^s v, C^{1/2}(x):C^{1/2}(y) \nabla^s v \rangle \\ & = \int_{\mathbb R^{2n}} C^{1/2}_{ij\alpha\beta}(x)C^{1/2}_{\alpha\beta lm}(y) \varepsilon^s_{ij}(x,y)\varepsilon_{lm}^s(x,y)\,dydx  = 2\, U^s(v).  \end{align*}
The Riesz representation theorem now ensures the existence of a bounded linear operator $G: {H}^{-s}(\Omega) \rightarrow \widetilde H^s(\Omega)$ associating each functional in ${H}^{-s}(\Omega)$ to its unique representative in the inner product given by $B_{L,M}(\cdot,\cdot)$ on $\widetilde H^s(\Omega)$. Thus the wanted (unique) solution $\tilde u\in \widetilde H^s(\Omega)$ can be defined as $\tilde u := G \tilde F$, and it verifies $$ B_{L,M}(\tilde u,\phi) = \tilde F(\phi)\quad \mbox{for all}\quad \phi\in \widetilde H^s(\Omega). $$
 The boundedness of $G$ and the definition of $\tilde u$ eventually give the estimate
 \begin{align*}
     \|u\|_{H^s(\mathbb R^n)} & \leq  \|f\|_{H^s(\mathbb R^n)} + \|\tilde u\|_{H^s(\Omega)}  =  \|f\|_{H^s(\mathbb R^n)} + \|G\tilde F\|_{H^s(\Omega)} \\ & \leq \|f\|_{H^s(\mathbb R^n)} + C\|\tilde F\|_{H^{-s}(\Omega)} \leq C\left( \|f\|_{H^s(\mathbb R^n)} + \|F\|_{H^{-s}(\Omega)} \right).
 \end{align*}
 \end{proof}

In light of Proposition \ref{well-posedness}, we can define a Poisson operator $P_{L,M}$ of $H^s(\mathbb R^n)$ into itself: if $f\in H^s(\mathbb R^n)$ is any exterior datum, then $P_{L,M}f$ is by definition the unique solution to the homogeneous problem \eqref{original-problem}. It of course follows from Proposition \ref{well-posedness} that $P_{L,M}$ is a bounded operator.

\begin{Rem}\label{that-one-property}
Let $f,g$ be exterior values, let $u_f:=P_{L,M}f$ and $u_g:=P_{L,M}g$ be the unique solutions corresponding to them, and let $e_f, e_g$ be any extensions. Then
\begin{align*}
    B_{L,M}(u_f, e_g) & = \langle C^{1/2}(x):C^{1/2}(y)\nabla^s u_f, \nabla^s e_g \rangle = \langle \mathbf E^s u_f, e_g\rangle = \langle \mathbf E^s u_f, e_g\rangle_{\Omega_e} \\ & =  \langle \mathbf E^s u_f, u_g\rangle_{\Omega_e} = \langle \mathbf E^s u_f, u_g\rangle = \langle  u_f, \mathbf E^s u_g\rangle,
\end{align*}
where we used the properties of $(\nabla\cdot)^s$, the fact that $\mathbf E^su_f=0$ in $\Omega$ and the self-adjointness of $\mathbf E^s$. Now following the same computations backwards gives $B_{L,M}(u_f,e_g) = B_{L,M}(u_g,e_f)$.
\end{Rem}

With the well-posedness of the direct problem, we can now define the DN map. Consider first the abstract trace space $X:=H^s(\R^n)/\widetilde{H}^s(\Omega)$. It is such that two functions $f_1, f_2\in H^s(\R^n)$ belong to the same equivalence class if and only if they agree in $\Omega_e$, in the sense that $f_1-f_2 \in \widetilde H^s(\Omega)$. If~$\Omega$ happens to be Lipschitz, then it has been proved in ~\cite[p.463]{GSU20} that $X=H^s(\Omega_e)$.

\begin{Lem}[DN map]\label{DN-map}
Let $s\in (0,1)$, and assume that $\Omega\subset\mathbb R^n$ is a bounded open set. There exists a continuous, self-adjoint linear map 
$$ \Lambda_{L,M} : X \rightarrow X^* \qquad\mbox{defined by}\qquad \langle \Lambda_{L,M}[f],[g]\rangle := B_{L,M}(P_{L,M}f,g),$$ 
where $f,g\in H^s(\mathbb R^n)$. 
\end{Lem}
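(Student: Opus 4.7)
The plan is to check in order that the formula $\langle \Lambda_{L,M}[f],[g]\rangle := B_{L,M}(P_{L,M}f,g)$ descends to a well-defined pairing between equivalence classes, is linear, bounded, and symmetric. First I would verify well-definedness in the second slot: if $g_1,g_2 \in H^s(\R^n)$ represent the same class $[g]$, then $g_1-g_2 \in \widetilde H^s(\Omega)$, and since $u_f := P_{L,M}f$ is a weak solution of \eqref{original-problem}, the weak formulation yields $B_{L,M}(u_f, g_1-g_2) = 0$. Well-definedness in the first slot follows from Proposition \ref{well-posedness}: if $f_1,f_2$ are two representatives of $[f]$, then $u_{f_1} - f_2 = (u_{f_1}-f_1)+(f_1-f_2) \in \widetilde H^s(\Omega)$, so $u_{f_1}$ solves \eqref{original-problem} with exterior value $f_2$, hence by uniqueness $P_{L,M}f_1 = P_{L,M}f_2$.

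Next, linearity of $\Lambda_{L,M}$ follows from the linearity of $P_{L,M}$ (which is an immediate consequence of the uniqueness part of Proposition \ref{well-posedness} together with the linearity of the equation and of the space $\widetilde H^s(\Omega)$) and the bilinearity of $B_{L,M}$. For continuity, I would combine the boundedness estimate \eqref{boundedness} of the bilinear form with the well-posedness estimate of Proposition \ref{well-posedness}: for arbitrary representatives $f,g$,
\begin{equation*}
|\langle \Lambda_{L,M}[f],[g]\rangle| = |B_{L,M}(P_{L,M}f, g)| \lesssim \|P_{L,M}f\|_{H^s(\R^n)} \|g\|_{H^s(\R^n)} \lesssim \|f\|_{H^s(\R^n)} \|g\|_{H^s(\R^n)}.
\end{equation*}
Since the left-hand side depends only on $[f]$ and $[g]$, I can take the infimum over representatives on the right, which by definition of the quotient norm on $X$ produces $|\langle \Lambda_{L,M}[f],[g]\rangle| \lesssim \|[f]\|_X \|[g]\|_X$, giving the required continuity $\Lambda_{L,M}:X\to X^*$.

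Finally, self-adjointness is essentially the content of Remark \ref{that-one-property}: picking any extensions $e_f, e_g$ of $f,g$ and writing $u_f = P_{L,M}f$, $u_g = P_{L,M}g$, that remark gives $B_{L,M}(u_f, e_g) = B_{L,M}(u_g, e_f)$, which is precisely $\langle \Lambda_{L,M}[f],[g]\rangle = \langle \Lambda_{L,M}[g],[f]\rangle$. I expect no serious obstacle here; the only point where one must be a little careful is confirming that the two-sided well-definedness really does use both the weak equation (for the $g$-slot) and the uniqueness statement (for the $f$-slot), and that the continuity argument correctly passes to quotient norms, but these are routine once Proposition \ref{well-posedness} and Remark \ref{that-one-property} are in hand.
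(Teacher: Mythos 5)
Your proposal is correct and follows essentially the same route as the paper: well-definedness in the $g$-slot from the weak equation, well-definedness in the $f$-slot from uniqueness in Proposition \ref{well-posedness}, continuity from the boundedness estimate \eqref{boundedness} combined with the well-posedness estimate, and self-adjointness from Remark \ref{that-one-property}. The only difference is that you spell out the infimum-over-representatives step that turns the $H^s$ estimate into a quotient-norm estimate, which the paper leaves implicit.
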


\begin{proof}
The proof is quite standard, and it follows the arguments presented in \cite{GSU20, CMRU20, C21}. The DN map $\Lambda_{L,M}$ is well-defined because of the well-posedness of the direct problem: in fact, since $[f]=f+\widetilde H^s(\Omega)$, we have $P_{L,M} f' = P_{L,M} f$ for all $f'\in[f]$. Moreover, $B_{L,M}(P_{L,M} f,g')=B_{L,M}(P_{L,M} f,g)$ for all $g'\in [g]$ by the definition of the Poisson operator. The boundedness of the bilinear form and the well-posedness estimate give the continuity of $\Lambda_{L,M}$. Finally, the self-adjointness of $\Lambda_{L,M}$ follows from Remark \ref{that-one-property}:
$$ \langle\Lambda_{L,M}[f],[g]\rangle = B_{L,M}(P_{L,M}f,e_g) = B_{L,M}(P_{L,M}g,e_f) = \langle\Lambda_{L,M}[g],[f]\rangle. $$
\end{proof}

\section{The fractional Liouville reduction}\label{sec-liou}
In this Section we show an equivalence between our original problem \eqref{original-problem} and a Schr\"odinger-like problem in which the nonlocal part does not depend on the coefficients. In analogy to the classical transformation from the conductivity equation to Schr\"odinger's, we call this procedure \emph{fractional Liouville reduction} (see also \cite{C20}). The reason of such transformation will be made clear in Remark \ref{why-though}. 
\vspace{3mm}

Recall that every vector valued  $u\in H^s(\mathbb R^n)$ admits a \emph{Helmholtz decomposition}, i.e., it can be written as $u=\nabla\phi + F$, where $\phi$ is the Newtonian potential of $\nabla \cdot u$ and $\nabla\cdot F=0$. See e.g. \cite{DP09} and references therein.

This allows us to define an operator $(\cdot) ' : u \mapsto u'$ for all $u\in H^s(\mathbb R^n) \times H^s(\mathbb R^n)$ such that, if the Helmholtz decomposition of $u$ is $$ u = \begin{pmatrix}u_1 \\ u_2 \end{pmatrix} = \begin{pmatrix}\nabla\phi_1 + F_1 \\ \nabla\phi_2 + F_2 \end{pmatrix}, \quad \mbox{ with } \nabla\cdot F_1 = \nabla\cdot F_2 =0,$$ then $u'\in H^s(\mathbb R^n) \times H^s(\mathbb R^n)$ is $$u':= \begin{pmatrix}
( 2n+4s+n'+2n's )\nabla\phi_1 + ( 2n+4s+n' )F_1    \\
n( 1+2s )\nabla\phi_2 +  n F_2  
\end{pmatrix}.$$ Recall that $n'= 2(n-2)/n$. The operator $(\cdot)'$ is bounded, with
\begin{equation}\label{prime-estimate}\begin{split}
    \|u'\|_{H^s\times H^s} & \lesssim \sum_{j=1}^2(\|\nabla\phi_j\|_{H^s} + \|F_j\|_{H^s}) \leq  \|u\|_{H^s\times H^s}.
\end{split}\end{equation} 
Moreover, if we define the differential operator $D$ acting as $$ D \begin{pmatrix} u_1 \\ u_2
\end{pmatrix} := \begin{pmatrix} D_1u_1 \\ D_2u_2
\end{pmatrix} := -\begin{pmatrix}
d_1\Delta u_1 + d_2 \nabla\nabla\cdot u_1  \\
 d_3\Delta u_2 +d_4\nabla\nabla\cdot u_2 
\end{pmatrix} :=
-\begin{pmatrix}
(2n+4s+n')\Delta u_1 + 2n's \nabla\nabla\cdot u_1  \\
 n\Delta u_2 +2ns\nabla\nabla\cdot u_2
\end{pmatrix}   ,$$ we can compute
\begin{align*}
   - Du & = \begin{pmatrix}
\left((2n+4s+n')\Delta + 2n's \nabla\nabla\cdot\right)(\nabla\phi_1 + F_1)  \\
\left(n\Delta +2ns\nabla\nabla\cdot\right)(\nabla\phi_2 + F_2)
\end{pmatrix}\\ & = \begin{pmatrix}
( 2n+4s+n'+2n's )\Delta\nabla\phi_1 + ( 2n+4s+n' )\Delta F_1    \\
n( 1+2s )\Delta\nabla\phi_2 +  n \Delta F_2  
\end{pmatrix} \\ & = \Delta\begin{pmatrix}
( 2n+4s+n'+2n's )\nabla\phi_1 + ( 2n+4s+n' )F_1    \\
n( 1+2s )\nabla\phi_2 +  n F_2  
\end{pmatrix} = \Delta u',
\end{align*}
which shows that $Du = -\Delta u'$. In particular, $(-\Delta)^{s-1}Du = (-\Delta)^su'$ whenever $u\in H^s(\mathbb R^n)\times H^s(\mathbb R^n)$.

\begin{Pro}[Fractional Liouville reduction]\label{liouville}
Let $L, M \in C^{2s+\eps}(\mathbb R^n)$ satisfy assumptions (A1)-(A3), and assume $f\in C^\infty_c(W)$ with $W\subset \Omega_e$ open and bounded. Define $$\Gamma(x):= \left(
\mu(x), k(x)\right), \qquad Q(x):= \frac{\Gamma(x)}{|\Gamma(x)|^2}\cdot (-\Delta)^{s-1}D(\Gamma(x)\otimes Id). $$  If $u\in H^s(\mathbb R^n)$ solves the original problem
\begin{equation}
    \begin{array}{rll}\label{original2}
       \mathbf E^su & =F & \quad \mbox{ in } \Omega \\
       u & = f & \quad \mbox{ in } \Omega_e
    \end{array}
\end{equation}
in the weak sense for some $F\in L^2(\Omega)$, then $w:=\Gamma\otimes u$ solves the transformed problem
\begin{equation}
    \begin{array}{rll}\label{easier-one}
       (-\Delta)^{s-1}Dw -w\cdot Q & = G & \quad \mbox{ in } \Omega \\
       w & = \Gamma\otimes f & \quad \mbox{ in } \Omega_e
    \end{array}
\end{equation}
in the weak sense in $\Gamma\otimes H^s$, where $G:= \Gamma\otimes \frac{(n/2+s)F}{|\Gamma|^2}$. Conversely, if $w\in \Gamma\otimes H^s(\mathbb R^n)$ solves \eqref{easier-one} in the weak sense in $\Gamma\otimes H^s$ for some $G\in \Gamma\otimes L^{2}(\Omega)$, then $u:=\frac{\Gamma\cdot w}{|\Gamma|^2}$ solves \eqref{original2} in the weak sense, where $F=\frac{\Gamma\cdot G}{n/2+s}$.
\end{Pro}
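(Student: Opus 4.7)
The plan is to establish, at the distributional level, the pointwise identity
\[
(n/2+s)\,\mathbf E^s u \;=\; \Gamma \cdot \bigl[(-\Delta)^{s-1} D w - w \cdot Q\bigr], \qquad w := \Gamma \otimes u,
\]
and then translate it into an equivalence of weak formulations by pairing against test functions of the form $\Gamma\otimes\phi$, with $\phi \in \widetilde H^s(\Omega)$. The exterior conditions transfer automatically, since $\Gamma$ is a Sobolev multiplier: $w - \Gamma\otimes f = \Gamma\otimes(u-f) \in \widetilde H^s(\Omega)^{2\times n}$.

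I would first invoke Lemma \ref{reduction-lemma} (in the matrix form of Remark \ref{matrix-u}) to expand $(n/2+s)\mathbf E^s u$ in terms of $\mu u$ and $k u$, which are the two rows of $w$. Writing $d_1=2n+4s+n'$, $d_2=2n's$, $d_3=n$, $d_4=2ns$ so that $2sn'=d_2$ and $2sn=d_4$, the first two lines of \eqref{big-formula} can be combined into $\mu[d_1(-\Delta)^s(\mu u)-d_2(-\Delta)^{s-1}\nabla\nabla\!\cdot\!(\mu u)] + k[d_3(-\Delta)^s(k u)-d_4(-\Delta)^{s-1}\nabla\nabla\!\cdot\!(k u)]$, which by the definition of $D$ is precisely $\Gamma\cdot(-\Delta)^{s-1}Dw$. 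For the last two lines I would compute $Q$ explicitly: since $D(\Gamma_a\,Id)$ has $(i,j)$-entry $-d_{a,1}\delta_{ij}\Delta\Gamma_a - d_{a,2}\partial_i\partial_j\Gamma_a$, contraction with $\Gamma/|\Gamma|^2$ gives $Q_{ji}=\frac{1}{|\Gamma|^2}\sum_a \Gamma_a[d_{a,1}\delta_{ji}(-\Delta)^s\Gamma_a - d_{a,2}(-\Delta)^{s-1}\partial_j\partial_i\Gamma_a]$. Using $\Gamma\cdot w = |\Gamma|^2 u$, a direct computation then produces $\Gamma\cdot(w\cdot Q)_i = \sum_a \Gamma_a[u_i d_{a,1}(-\Delta)^s\Gamma_a - d_{a,2}u_j(-\Delta)^{s-1}\partial_i\partial_j\Gamma_a]$, which is exactly the negative of the sum of the third and fourth lines of \eqref{big-formula}. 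All operators make sense between the claimed spaces: by \eqref{mapping2} each $(-\Delta)^{s-1}\partial_i\partial_j\Gamma_a$ lies in $C^\eps(\R^n)\subset L^\infty$, so $Q\in L^\infty(\R^n)$, and multiplication by $\Gamma\in C^{2s+\eps}(\R^n)$ preserves $H^{\pm s}(\R^n)$ by the same multiplier argument used at the end of the proof of Lemma \ref{reduction-lemma}.

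To finish, I would pair the identity with an arbitrary $\phi\in \widetilde H^s(\Omega)$ and use the adjoint relation $\langle \Gamma\cdot T,\phi\rangle = \langle T,\Gamma\otimes\phi\rangle$ to obtain $(n/2+s)\langle \mathbf E^s u,\phi\rangle = \langle (-\Delta)^{s-1}Dw - w\cdot Q,\Gamma\otimes\phi\rangle$; the definition of $G$ further gives $\langle G,\Gamma\otimes\phi\rangle = (n/2+s)\langle F,\phi\rangle$, which matches the two weak equations. This proves the forward direction. For the converse, given $w\in\Gamma\otimes H^s$, assumption (A3) that $|\Gamma|^2$ is bounded below makes $u := (\Gamma\cdot w)/|\Gamma|^2 \in H^s(\R^n)$ well-defined with $w = \Gamma\otimes u$; running the same identity backwards extracts $\mathbf E^s u = F$ from the transformed equation tested against $\Gamma\otimes\phi$. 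The main obstacle will be the combinatorial bookkeeping in the middle paragraph: carefully matching the constants $d_{a,b}$ in $D$ and $Q$ with those appearing in Lemma \ref{reduction-lemma}, and correctly interpreting the tensor contractions $w\cdot Q$ and $\Gamma\cdot(w\cdot Q)$.
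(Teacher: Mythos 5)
Your proposal is correct and follows essentially the same route as the paper: reduce to the pointwise identity $\Gamma\cdot\bigl((-\Delta)^{s-1}D(\Gamma\otimes u)-(\Gamma\otimes u)\cdot Q\bigr)=(n/2+s)\,\mathbf E^s u$ via Lemma \ref{reduction-lemma}, verify $Q\in L^\infty$ via \eqref{mapping2}, and pass to the weak formulation by testing against $\Gamma\otimes\phi$. One useful addition over the paper is that you actually carry out the bookkeeping — matching the first two lines of \eqref{big-formula} with $\Gamma\cdot(-\Delta)^{s-1}Dw$ using $2sn'=d_2$, $2sn=d_4$, and the last two with $-\Gamma\cdot(w\cdot Q)$ by computing $Q_{ji}$ entrywise — whereas the paper only asserts this follows "after a straightforward computation." Two small notes: you don't need Remark \ref{matrix-u} here since $u$ is vector-valued and Lemma \ref{reduction-lemma} applies directly (the matrix form is only needed in Step 2 of the main theorem's proof), and the paper additionally observes that $B_Q$ is coercive on $\Gamma\otimes\widetilde H^s(\Omega)$ (via $B_Q(\Gamma\otimes u,\Gamma\otimes u)=(2n+4s)U^s(u)$), establishing well-posedness of \eqref{easier-one}; this is not part of the proposition as stated but is used later, so including it is worthwhile.
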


Observe that saying that $w\in \Gamma\otimes H^s(\mathbb R^n)$ is a solution in the weak sense in $\Gamma\otimes H^s$ means that $$B_Q(w, \Gamma\otimes \phi) = \langle G,  \Gamma\otimes \phi\rangle, \qquad\mbox{ for all } \quad \phi\in\widetilde H^s(\Omega).$$
This can be equivalently written as 
\[
\Gamma \cdot ((-\Delta)^{s-1}Dw -w\cdot Q) = \Gamma \cdot G \text{ in $\Omega$}.
\]
\begin{proof}
Because of the assumptions on $L,M$ we have $\Gamma\in C^{2s+\eps}(\mathbb R^n) \times C^{2s+\eps}(\mathbb R^n)$, with $\Gamma(x)=\left( \mu_0 , k_0 \right) =:\gamma$ for all $x\in\Omega_e$. Therefore $$Q(x) = \frac{\Gamma(x)}{|\Gamma(x)|^2}\cdot (-\Delta)^{s-1}D((\Gamma(x)-\gamma)\otimes Id),$$ where $\Gamma(x)-\gamma=:\tilde\Gamma \in C^{2s+\eps}_c(\Omega)\times C^{2s+\eps}_c(\Omega)$. By formula \eqref{mapping2}, we have $(-\Delta)^{s-1}D((\Gamma(x)-\gamma)\otimes Id)\in C^{\eps}(\mathbb R^n) \subset L^\infty(\mathbb R^n)$. Since $\frac{\Gamma(x)}{|\Gamma(x)|^2}$ is also in $L^\infty(\mathbb R^n)$ by property \eqref{smooth}, we conclude that $Q\in L^\infty(\R^n)$.

Let now $u\in H^s(\mathbb R^n)$. In terms of the new symbols introduced in the statement of the Proposition, after a straightforward computation the equality from Lemma \ref{reduction-lemma} can equivalently be written as 
\begin{equation}\label{the-one-formula}
    \Gamma\cdot\left( (-\Delta)^{s-1}D(\Gamma\otimes u) - (\Gamma\otimes u)\cdot Q \right) = (n/2+s)\mathbf E^su .
\end{equation}
Observe that problem \eqref{easier-one} is well-posed in the Hilbert space $\Gamma\otimes H^s(\mathbb R^n)$. In fact, in $(\Gamma\otimes H^s(\mathbb R^n))\times(\Gamma\otimes H^s(\mathbb R^n))$ the bilinear form $$B_Q(v_1,v_2) := \langle (-\Delta)^{s/2}v_1', (-\Delta)^{s/2}v_2 \rangle - \langle v_1\cdot Q, v_2 \rangle$$ is clearly bounded, and it is also coercive by equation \eqref{the-one-formula}:
\begin{align*}
    B_Q(\Gamma\otimes u, \Gamma\otimes u) & = \langle (-\Delta)^{s-1}D(\Gamma\otimes u)- (\Gamma\otimes u)\cdot Q, \Gamma\otimes u \rangle \\ & = \langle \Gamma\cdot\left( (-\Delta)^{s-1}D(\Gamma\otimes u)- (\Gamma\otimes u)\cdot Q\right), u \rangle \\ & = (n/2+s)\langle\mathbf E^s u,u \rangle \\ & =(2n+4s)U^s(u) \geq 0.
\end{align*}
Thus \eqref{easier-one} has a unique solution of the kind $w=\Gamma\otimes v$ for some $v\in H^s(\mathbb R^n)$, provided that {$G \in L^2(\Omega)$.}
Of course one must have $v=f$ in $\Omega_e$. 

If now $u\in H^s(\R^n)$ solves \eqref{original2} and $G:= \Gamma\otimes \frac{(n/2+s)F}{|\Gamma|^2}$, then by \eqref{the-one-formula} $w := \Gamma \otimes u$ solves \eqref{easier-one}.
Conversely, if $w = \Gamma \otimes v$ solves \eqref{easier-one} in $\Gamma \otimes H^s$ and $F=\frac{\Gamma\cdot G}{n/2+s}$, then by \eqref{the-one-formula} $v$ solves \eqref{original2}. Therefore, $u= \frac{\Gamma\cdot w}{|\Gamma|^2} = \frac{\Gamma\cdot (\Gamma\otimes v)}{|\Gamma|^2} = v$ must also solve \eqref{original2}.
\end{proof}

We also define the adjoint bilinear form
$$B^*_Q(u,v):= \langle (-\Delta)^{s/2}u, (-\Delta)^{s/2}v' \rangle - \langle v\cdot Q, u \rangle,$$ which of course shares the same boundedness inequality as $B_Q$ and can similarly be extended to act on $(\Gamma\otimes \widetilde H^s(\Omega))\times(\Gamma\otimes \widetilde H^s(\Omega))$. It is also clear that we have $B_Q(u,v) = B^*_Q(v,u)$. Given the well-posedness in $\Gamma\otimes H^s$ of problem \eqref{easier-one}, we can define the Poisson operator $P_Q$ associating to the exterior datum $\Gamma\otimes f$ the unique solution $w=\Gamma\otimes v$ to the problem \eqref{easier-one} with $G=0$. We can also define the DN map $\Lambda_Q$ in a similar fashion as in our Lemma \ref{DN-map}: 
\begin{align*}
    \langle\Lambda_Q[\Gamma\otimes f_1]&,[\Gamma\otimes f_2]\rangle := B_Q(P_Q(\Gamma\otimes f_1),\Gamma\otimes f_2) \\ & = \langle (-\Delta)^{s/2}(P_Q(\Gamma\otimes f_1))', (-\Delta)^{s/2}(\Gamma\otimes f_2) \rangle - \langle P_Q(\Gamma\otimes f_1)\cdot Q, \Gamma\otimes f_2 \rangle \\ & = \langle \Gamma\cdot \left( (-\Delta)^{s-1}DP_Q(\Gamma\otimes f_1) - P_Q(\Gamma\otimes f_1)\cdot Q \right), f_2 \rangle ,
\end{align*}
and similarly for $\Lambda_Q^*$. With the usual computation (see e.g.\ the analogous result in \cite{CMRU20}), we get $\langle\Lambda_Q[g_1],[g_2]\rangle = \langle[g_1],\Lambda_Q^*[g_2]\rangle$, which motivates the choice of symbols.

\section{The Alessandrini identity}\label{sec-alex}

The most important instruments needed for proving our main theorem are the so called Alessandrini identity and Runge approximation property, which we study in this and the next section. Let us start from a simple Lemma relating the DN maps of the original and transformed problems:
\begin{Lem}[Relation between the DN maps]\label{relation-btw-DNmaps}
Let $f_j \in C^\infty_c(W_j)$ for $j=1,2$, where $W_1, W_2 \subset \Omega_e$ are open, bounded and disjoint. Then the following equation holds:
\begin{equation}
    (n/2+s)\langle \Lambda_{L,M}[f_1],[f_2] \rangle = \langle \Lambda_Q[\Gamma\otimes f_1] , [\Gamma\otimes f_2] \rangle.
\end{equation}
\end{Lem}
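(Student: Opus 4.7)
The plan is to reduce both sides to the same quantity $(n/2+s)\langle \mathbf E^s u, f_2\rangle$, where $u := P_{L,M}f_1$, by using the fundamental pointwise identity \eqref{the-one-formula} together with the fact that the Poisson operator of the transformed problem is obtained by tensoring with $\Gamma$.

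First, I would show that $P_Q(\Gamma\otimes f_1) = \Gamma\otimes P_{L,M}f_1$. Indeed, Proposition \ref{liouville} (with $F=0$ and $G=0$) says that if $u$ solves the homogeneous problem \eqref{original2} with exterior value $f_1$, then $\Gamma\otimes u$ solves the homogeneous version of \eqref{easier-one} with exterior value $\Gamma\otimes f_1$; by the well-posedness in $\Gamma\otimes H^s$ used in the proof of that proposition, $\Gamma\otimes P_{L,M}f_1$ is the unique such solution, hence equals $P_Q(\Gamma\otimes f_1)$.

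Next, I would rewrite $B_Q$ in a form where the identity \eqref{the-one-formula} can be applied. Using self-adjointness of $(-\Delta)^{s/2}$ and the equality $(-\Delta)^{s-1}Dw = (-\Delta)^s w'$ established just before Proposition \ref{liouville}, we have, for any $w_1\in \Gamma\otimes H^s(\mathbb{R}^n)$ and any $w_2\in H^s(\mathbb{R}^n)\times H^s(\mathbb{R}^n)$,
\begin{equation*}
B_Q(w_1,w_2) \;=\; \langle (-\Delta)^{s-1}Dw_1 - w_1\cdot Q,\; w_2\rangle.
\end{equation*}
Applying this with $w_1 = \Gamma\otimes u$ and $w_2 = \Gamma\otimes f_2$, and observing that the Frobenius pairing satisfies $\langle v,\Gamma\otimes f_2\rangle = \langle \Gamma\cdot v, f_2\rangle$ for every $2\times n$ tensor-valued $v$, we obtain
\begin{equation*}
\langle \Lambda_Q[\Gamma\otimes f_1],[\Gamma\otimes f_2]\rangle
= \big\langle \Gamma\cdot\big((-\Delta)^{s-1}D(\Gamma\otimes u) - (\Gamma\otimes u)\cdot Q\big),\; f_2 \big\rangle.
\end{equation*}
At this point the key identity \eqref{the-one-formula} from the reduction lemma transforms the bracketed expression into $(n/2+s)\mathbf E^s u$, giving
\begin{equation*}
\langle \Lambda_Q[\Gamma\otimes f_1],[\Gamma\otimes f_2]\rangle \;=\; (n/2+s)\,\langle \mathbf E^s u, f_2\rangle.
\end{equation*}

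Finally, I would relate the right-hand side to $\Lambda_{L,M}$. By definition $\langle \Lambda_{L,M}[f_1],[f_2]\rangle = B_{L,M}(u,f_2)$, and the same computation as in Remark \ref{that-one-property} (unfolding $B_{L,M}$ as a duality pairing through the adjointness of $\nabla^s$ and $(\nabla\cdot)^s$) gives $B_{L,M}(u,f_2) = \langle \mathbf E^s u, f_2\rangle$, which is well-defined since $\mathbf E^s:H^s\to H^{-s}$ by Lemma \ref{self-adjointness}. Combining the two displayed equalities yields the claim. The only subtlety here is bookkeeping for the tensorial contractions and for the identification $\langle\cdot, \Gamma\otimes f_2\rangle = \langle \Gamma\cdot (\cdot), f_2\rangle$; once those are in place, the statement follows as a direct corollary of the pointwise reduction identity \eqref{the-one-formula}.
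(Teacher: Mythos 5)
Your proof is correct and follows essentially the same approach as the paper: both hinge on identifying $P_Q(\Gamma\otimes f_1)$ with $\Gamma\otimes P_{L,M}f_1$, unfolding $B_Q$ into a duality pairing, applying the pointwise reduction identity \eqref{the-one-formula}, and using the tensor contraction identity $\langle v, \Gamma\otimes f_2\rangle = \langle\Gamma\cdot v, f_2\rangle$. The only (immaterial) difference is direction: the paper starts from $\langle\Lambda_{L,M}[f_1],[f_2]\rangle$ and rewrites toward $\Lambda_Q$, while you start from $\Lambda_Q$ and rewrite toward $\Lambda_{L,M}$.
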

\begin{proof}
Let $u_1$ be the unique solution to problem \eqref{original2} corresponding to the exterior value $f_1$, and let $w_1$ be the unique solution to \eqref{easier-one} corresponding to $u_1$ via the fractional Liouville reduction. In light of formula \eqref{the-one-formula}, we can compute
\begin{align*}
    (n/2+s)\langle \Lambda_{L,M}[f_1],&[f_2] \rangle  = (n/2+s)\langle \mathbf E^s u_1, f_2 \rangle  = \langle \Gamma\cdot\left((-\Delta)^{s-1}Dw_1 - w_1\cdot Q \right), f_2 \rangle \\ & =\langle (-\Delta)^{s-1}Dw_1 - w_1\cdot Q, \Gamma\otimes f_2 \rangle  = \langle \Lambda_Q[\Gamma\otimes f_1] , [\Gamma\otimes f_2] \rangle.
\end{align*}
Therefore, complete knowledge of the DN map $\Lambda_{L,M}$ is equivalent to knowledge of the DN map $\Lambda_Q$ on functions $g_j$ of the kind $\Gamma\otimes f_j$.
\end{proof}

Next, we state and prove the Alessandrini identity:
\begin{Lem}[Alessandrini identity]\label{alex}
Let $\Omega\subset\mathbb R^n$ be a bounded open set and $s\in(0,1)$. Let $L_j, M_j$ for $j=1,2$ be two sets of Lam\'e parameters satisfying assumptions (A1)-(A3), corresponding to $\Gamma_j, Q_j$ for $j=1,2$ through the fractional Liouville reduction. Assume $\Gamma_1(x)=\Gamma_2(x)=:\gamma$ for all $x\in\Omega_e$ and that the relative Poisson ratios of $(L_1,M_1)$ and $(L_2,M_2)$ agree in $\mathbb{R}^n$, i.e.\ $(L_1,M_1) \sim (L_2,M_2)$. Then the following integral identity holds for all $f_1, f_2 \in C^\infty_c(\Omega_e)$
$$ (n/2+s)\langle (\Lambda_{L_1,M_1}-\Lambda_{L_2,M_2})[f_1],[f_2]\rangle = \langle u_1\cdot (Q_1-Q_2), u_2^* \rangle, $$
where $u_1 := P_{Q_1}(\gamma\otimes f_1)$ and $u_2^* := P_{Q_2}^*(\gamma\otimes f_2)$. 
\end{Lem}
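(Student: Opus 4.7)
The natural strategy is to import the standard Alessandrini argument from the Calder\'on problem to the Schr\"odinger-like setting obtained via the fractional Liouville reduction, and then transport the resulting identity back to the elasticity DN maps using Lemma~\ref{relation-btw-DNmaps}. First I would note that since $f_1,f_2\in C^\infty_c(\Omega_e)$ and $\Gamma_1=\Gamma_2=\gamma$ on $\Omega_e$, the tensors $\Gamma_j\otimes f_j$ all coincide with $\gamma\otimes f_j$, so Lemma~\ref{relation-btw-DNmaps} applied to $(L_1,M_1)$ and $(L_2,M_2)$ and then subtracted gives
\begin{equation*}
(n/2+s)\langle(\Lambda_{L_1,M_1}-\Lambda_{L_2,M_2})[f_1],[f_2]\rangle
=\langle(\Lambda_{Q_1}-\Lambda_{Q_2})[\gamma\otimes f_1],[\gamma\otimes f_2]\rangle.
\end{equation*}
It therefore suffices to prove the Schr\"odinger-type Alessandrini identity
$\langle(\Lambda_{Q_1}-\Lambda_{Q_2})[\gamma\otimes f_1],[\gamma\otimes f_2]\rangle=\pm\langle u_1\cdot(Q_1-Q_2),u_2^*\rangle$, with the sign fixed by the conventions of $B_Q$.

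The key preliminary observation, and the place where the hypothesis $(L_1,M_1)\sim(L_2,M_2)$ enters crucially, is that equal Poisson ratios force the relation $L_2/L_1=M_2/M_1=:\alpha$ pointwise. Hence $k_2=\sqrt{\alpha}\,k_1$ and $\mu_2=\sqrt{\alpha}\,\mu_1$, so $\Gamma_2=\sqrt{\alpha}\,\Gamma_1$. Since $\alpha\equiv 1$ on $\Omega_e$ and $\alpha\in C^{2s+\eps}(\R^n)$ is bounded above and below, multiplication by $\sqrt{\alpha}$ preserves $\widetilde H^s(\Omega)$ (it preserves the support inside $\overline\Omega$ and is a Sobolev multiplier by the regularity of $\alpha$), and similarly for $1/\sqrt{\alpha}$. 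Therefore
\begin{equation*}
\Gamma_1\otimes\widetilde H^s(\Omega)=\Gamma_2\otimes\widetilde H^s(\Omega),
\end{equation*}
so the space of admissible test functions for the weak formulations of \eqref{easier-one} with potentials $Q_1$ and $Q_2$ is the same. Without this identification the standard Alessandrini substitution cannot be carried out, and this is the main conceptual obstacle the Poisson-ratio hypothesis is designed to overcome.

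Given this, I would carry out the classical Alessandrini exchange in two steps. Writing $u_1=P_{Q_1}(\gamma\otimes f_1)$, $v_1=P_{Q_2}(\gamma\otimes f_1)$, and $u_2^*=P^*_{Q_2}(\gamma\otimes f_2)$, one has
\begin{equation*}
\langle\Lambda_{Q_j}[\gamma\otimes f_1],[\gamma\otimes f_2]\rangle=B_{Q_1}(u_1,\gamma\otimes f_2)\text{ or }B_{Q_2}(v_1,\gamma\otimes f_2),
\end{equation*}
depending on $j=1,2$. Replacing $\gamma\otimes f_2$ by $u_2^*$ in both expressions incurs a correction term of the form $B_{Q_j}(\cdot,\gamma\otimes f_2-u_2^*)$, and by construction $u_2^*-\gamma\otimes f_2\in\Gamma_2\otimes\widetilde H^s(\Omega)=\Gamma_1\otimes\widetilde H^s(\Omega)$; thus the $j=2$ correction vanishes by the weak equation satisfied by $v_1$, and the $j=1$ correction vanishes by the weak equation for $u_1$ once the two test spaces are identified. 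Similarly, replacing $v_1$ by $u_1$ inside $B_{Q_2}(v_1,u_2^*)$ gives an error $B_{Q_2}(v_1-u_1,u_2^*)$ with $v_1-u_1\in\Gamma_1\otimes\widetilde H^s(\Omega)=\Gamma_2\otimes\widetilde H^s(\Omega)$, which vanishes because $u_2^*$ solves the adjoint $Q_2$ problem.

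After these substitutions the difference reduces to $B_{Q_1}(u_1,u_2^*)-B_{Q_2}(u_1,u_2^*)$, in which the nonlocal part $\langle(-\Delta)^{s/2}u_1',(-\Delta)^{s/2}u_2^*\rangle$ does not depend on $Q$ and cancels, leaving only the potential pairing $\langle u_1\cdot(Q_1-Q_2),u_2^*\rangle$ (up to the overall sign dictated by the convention in the definition of $B_Q$). Combined with the first display this yields the claimed identity. The only step that is not routine bookkeeping is the verification that $\Gamma_1\otimes\widetilde H^s(\Omega)$ and $\Gamma_2\otimes\widetilde H^s(\Omega)$ coincide; once that is in hand the proof is essentially the standard Alessandrini argument, performed bilinearly in the tensorized Helmholtz-decomposed setting.
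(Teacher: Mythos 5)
Your proposal is correct and follows essentially the same route as the paper: apply Lemma~\ref{relation-btw-DNmaps}, observe via the Poisson-ratio hypothesis that $\Gamma_2=\rho\,\Gamma_1$ for a fixed positive $C^{2s+\eps}$-function $\rho$ equal to $1$ on $\Omega_e$ (this is the paper's Remark~\ref{gauge}), use that fact to identify the test spaces $\Gamma_1\otimes\widetilde H^s(\Omega)=\Gamma_2\otimes\widetilde H^s(\Omega)$ so the standard Alessandrini exchange can be carried out, and then observe that the nonlocal leading parts cancel, leaving only the potential pairing. The paper performs the exchange slightly more compactly by immediately passing to $\Lambda_{Q_2}^*$ and $B_{Q_2}^*$ rather than introducing the intermediate solution $v_1=P_{Q_2}(\gamma\otimes f_1)$ as you do, but the key idea and the place where $(L_1,M_1)\sim(L_2,M_2)$ is used are identical. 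The sign you flagged is a harmless convention issue; it does not affect the use of the identity in the main theorem.
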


\begin{Rem}\label{gauge}
Let $\nu$ be the Poisson ratio corresponding to the Lam\'e parameters $M,K$. By the definition of $\nu$ and Lemma \ref{square-root}, we see that there is a one-to-one correspondence between $\nu$ and the ratio $r:=\mu/k$ of the Lam\'e parameters $\mu,k$ of the square root of the stiffness tensor. Thus $(L_1,M_1)\sim(L_2,M_2)$ if and only if $\mu_1/k_1 = \mu_2 / k_2$, that is, if and only if $\Gamma_1=\rho\Gamma_2$ for some fixed function $\rho$. We say that in this case $\Gamma_1$ and $\Gamma_2$ are themselves in gauge, and we indicate this by $\Gamma_1\sim \Gamma_2$. 
\end{Rem}

\begin{proof}[Proof of Lemma \ref{alex}]
The proof is a computation following from Lemma \ref{relation-btw-DNmaps}:
\begin{align*}
    (n/2+s)\langle (\Lambda_{L_1,M_1}&-\Lambda_{L_2,M_2})[f_1],[f_2]\rangle \\ & = (n/2+s)(\langle \Lambda_{L_1,M_1}[f_1],[f_2]\rangle - \langle \Lambda_{L_2,M_2}[f_1],[f_2]\rangle ) \\ & = \langle \Lambda_{Q_1}[\Gamma_1\otimes f_1] , [\Gamma_1\otimes f_2] \rangle-\langle \Lambda_{Q_2}[\Gamma_2\otimes f_1] , [\Gamma_2\otimes f_2] \rangle \\ & = \langle \Lambda_{Q_1}[\gamma\otimes f_1] , [\gamma\otimes f_2] \rangle -\langle [\gamma\otimes f_1] , \Lambda_{Q_2}^*[\gamma\otimes f_2] \rangle \\ & =B_{Q_1}(P_{Q_1}(\gamma\otimes f_1), {\Gamma_2}\otimes f_2)- B_{Q_2}^*(P_{Q_2}^*(\gamma\otimes f_2), {\Gamma_1} \otimes f_1) \\ & = B_{Q_1}(u_1, u_2^*)-B_{Q_2}^*(u_2^*, u_1) \\ & = \langle u_1 \cdot (Q_1-Q_2), u_2^* \rangle.
\end{align*}
Here we have used the fact that $\Gamma_1 \sim \Gamma_2$ in order to deduce that $B_{Q_1}(u_1, {\Gamma_2} \otimes f_2) = B_{Q_1}(u_1, u_2^*)$, and similarly for the other term. In fact, this will be true as soon as
$$ B_{Q_1}(u_1, \Gamma_2\otimes v_2) =0 $$ for all $v_2 \in \widetilde H^s(\Omega)$, which is granted by the fact that $u_1$ is a weak solution in $\Gamma_1 \otimes H^s$and $\Gamma_2 = \rho \Gamma_1$.
\end{proof}

Given that the right hand side of the Alessandrini identity from Lemma \ref{alex} only contains the difference of the transformed potentials $Q_1$ and $Q_2$, we can at most hope to recover $Q$ from the complete knowledge of the DN map $\Lambda_{L,M}$. This suggests that we may encounter a gauge invariance for our inverse problem: if many different couples of Lam\'e parameters $(L,M)$ give rise to the same transformed potential $Q$, they will remain indistinguishable. Thus we are now left with two tasks: to find appropriate {solutions} to use in the Alessandrini identity which will let us recover information about $Q$, and to study the relative gauge. These problems will be considered in the coming sections. 

\section{Runge approximation property and proof of the main Theorem}\label{sec-proof}

Because of the particular exterior values associated to the solutions appearing in our Alessandrini identity, we do  not need to prove a full Runge approximation property in the sense of \cite{GSU20} or \cite{CMRU20}. We rather need only the following result:

\begin{Lem}[Runge approximation property]\label{better-runge-2}
Let $\Omega, W\subset\mathbb R^n$ be bounded open sets such that $W\subset\Omega_e$, and assume $s\in(0,1)$. Define $$\mathcal R := \left\{ \frac{\Gamma\cdot P^*_{Q}(\Gamma\otimes f)}{|\Gamma|^2}-f :  f\in C^\infty_c(W) \right\} \subset \widetilde{H}^s(\Omega).$$ Then the set $\{ w|_{\Omega} : w \in \mathcal{R} \}$ is dense in $L^2(\Omega)$. The same result holds when we substitute $P_Q$ to $P_Q^*$ in the definition of $\mathcal R$.
\end{Lem}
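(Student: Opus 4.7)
The plan is to proceed by a standard Runge-type argument based on Hahn--Banach duality and a unique continuation property. By Hahn--Banach it suffices to show that if $v \in L^2(\Omega;\mathbb R^n)$ satisfies $(v, w|_\Omega)_{L^2(\Omega)} = 0$ for every $w \in \mathcal R$, then $v \equiv 0$. Writing $u^*_f := \frac{\Gamma \cdot P^*_Q(\Gamma \otimes f)}{|\Gamma|^2}$, and noting that each $f \in C^\infty_c(W)$ vanishes on $\Omega$, the hypothesis simplifies to
\[
\int_\Omega v \cdot u^*_f \, dx = 0 \qquad \text{for all } f \in C^\infty_c(W).
\]

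The first step is to identify $u^*_f$ with the Poisson solution of the original elasticity problem. Well-posedness of the transformed problem forces $P^*_Q(\Gamma \otimes f) = \Gamma \otimes u^*_f$, so $u^*_f - f \in \widetilde H^s(\Omega)$. Invoking formula \eqref{the-one-formula} together with the self-adjointness of $\mathbf E^s$ (Lemma \ref{self-adjointness}), both $B_Q$ and $B^*_Q$ coincide, up to a positive multiple, with $B_{L,M}$ on $\Gamma \otimes H^s(\mathbb R^n) \times \Gamma \otimes H^s(\mathbb R^n)$. Hence $u^*_f$ is a weak solution of \eqref{original-problem} with exterior datum $f$, and uniqueness in Proposition \ref{well-posedness} gives $u^*_f = P_{L,M} f$.

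Next I would introduce the dual problem. Extending $v$ by zero to $\tilde v \in L^2(\mathbb R^n;\mathbb R^n) \subset H^{-s}(\Omega)$, Proposition \ref{well-posedness} furnishes a unique $\phi \in \widetilde H^s(\Omega)$ with $\mathbf E^s \phi = \tilde v$ in $\Omega$ and $\phi = 0$ in $\Omega_e$. Using the symmetry of $B_{L,M}$ together with the weak equation $B_{L,M}(P_{L,M} f, \psi) = 0$ valid for every $\psi \in \widetilde H^s(\Omega)$, a short calculation yields
\[
(v, u^*_f)_{L^2(\Omega)} = (v, P_{L,M} f - f)_{L^2(\Omega)} = B_{L,M}(\phi, P_{L,M}f - f) = -B_{L,M}(\phi, f).
\]
Because $B_{L,M}(\phi, f)$ is a positive multiple of the pairing $\langle \mathbf E^s \phi, f\rangle$, the orthogonality hypothesis forces $\mathbf E^s \phi = 0$ in $W$; combined with $\phi = 0$ in $W \subset \Omega_e$, this gives $\phi = \mathbf E^s \phi = 0$ in $W$.

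The argument is completed by a unique continuation property for $\mathbf E^s$: any $\phi \in H^s(\mathbb R^n)$ which vanishes together with $\mathbf E^s \phi$ on a nonempty open set must vanish identically. I plan to derive this by applying Proposition \ref{liouville} to $w := \Gamma \otimes \phi$: in $W$ one has $w = 0$ and, via \eqref{the-one-formula} and the identity $(-\Delta)^{s-1}D w = (-\Delta)^s w'$, the $\Gamma$-contraction of the reduced equation provides a vanishing statement amenable to the UCP for the fractional Laplacian. This UCP step is the main technical obstacle: the reduced equation is matrix-valued and only its contraction with $\Gamma$ directly encodes the scalar equation $\mathbf E^s \phi = 0$, so propagating the scalar UCP to recover $w \equiv 0$ (and hence $\phi \equiv 0$) is the crux. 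Once $\phi \equiv 0$ is established, $\tilde v = \mathbf E^s \phi = 0$, so $v \equiv 0$ and density holds. The variant with $P_Q$ replacing $P^*_Q$ follows identically, with the roles of $B_Q$ and $B^*_Q$ exchanged.
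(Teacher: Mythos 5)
Your overall blueprint — Hahn--Banach duality, an inhomogeneous dual problem, eventual appeal to the scalar UCP for the fractional Laplacian — matches the paper's strategy, and the identification $u^*_f = P_{L,M}f$ via Proposition \ref{liouville} and the symmetry of $B_{L,M}$ is correct and tidy. The genuine gap is exactly where you flag it, and it is not a detail that can be deferred.

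You cast the dual problem in terms of the original operator, $\mathbf E^s\phi = \tilde v$ in $\Omega$, $\phi = 0$ in $\Omega_e$, and correctly reduce the orthogonality hypothesis to $\mathbf E^s\phi = 0$ in $W$; together with $\phi = 0$ on $W$ you then want a stand-alone unique continuation principle for $\mathbf E^s$. You acknowledge you do not have it. The point is that the paper never proves nor needs such a UCP. It keeps the dual problem in the transformed form $(-\Delta)^{s-1}Dw - w\cdot Q = -\Gamma\otimes F/|\Gamma|^2$ with $w = \Gamma\otimes\phi$, derives $(-\Delta)^s(\gamma\cdot w') = 0$ in $W$, and applies the scalar UCP only to the single function $\gamma\cdot w'$ to get $\gamma\cdot w'\equiv 0$. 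The step you are missing comes next and it is where the argument actually closes: substituting $\gamma\cdot w'\equiv 0$ back into the inhomogeneous equation and contracting with $\gamma$ forces the algebraic identity $\phi\cdot Q = F/|\Gamma|^2$ in $\Omega$, which collapses the full equation to $(-\Delta)^s w' = 0$ in $\Omega$; this is then resolved by the well-posedness of the exterior problem for $(-\Delta)^s$ (not by a UCP), giving $w'\equiv 0$, hence $w\equiv 0$, hence $F\equiv 0$. In other words, the paper exploits the specific source $F$ — the very object to be annihilated — as a lever, relating it to the potential term so that the vector equation trivializes once the one scalar constraint $\gamma\cdot w'\equiv 0$ is known. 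Your homogeneous formulation has no such lever: from the single scalar equation $\gamma\cdot w'\equiv 0$ one cannot conclude that the matrix-valued $w$ (or $w'$) vanishes, which is precisely why the UCP for $\mathbf E^s$ does not follow from the scalar UCP by the route you sketch. To repair the proof you should either keep the dual problem in the transformed, inhomogeneous form and carry the source through as the paper does, or supply a genuinely independent proof of the vector UCP; as written, the proposal is incomplete at its decisive step.
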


\begin{proof} By the Hahn-Banach theorem, it is enough to show that any $F\in L^2(\Omega)$ such that $\langle F,v\rangle=0$ for all $v\in\mathcal R$ must vanish identically. Fix any $F\in L^2(\Omega)$ with such property, and consider the problem
\begin{equation*}
    \begin{array}{rll}
       (-\Delta)^{s-1}Dw -w\cdot Q & = - \frac{\Gamma\otimes F}{|\Gamma|^2} & \quad \mbox{ in } \Omega \\
       w & = 0 & \quad \mbox{ in } \Omega_e
    \end{array}.
\end{equation*}
It has a unique weak solution in $\Gamma\otimes \widetilde H^s(\Omega)$ of the form $w=\Gamma\otimes \phi$ by Proposition \ref{liouville}. 
Then for any $f\in C^\infty_c(W)$ we have
\begin{align*}
    0& = \langle F,  \frac{\Gamma\cdot P^*_{Q}(\Gamma\otimes f)}{|\Gamma|^2}-f\rangle \\ & = \langle F,  \Gamma\cdot\frac{ P^*_{Q}(\Gamma\otimes f)-\Gamma\otimes f}{|\Gamma|^2}\rangle = \langle \frac{\Gamma\otimes F}{|\Gamma|^2},   P^*_{Q}(\Gamma\otimes f)-\Gamma\otimes f\rangle \\ & = \langle (-\Delta)^{s-1}Dw -w\cdot Q, \Gamma\otimes f \rangle - \langle (-\Delta)^{s-1}Dw -w\cdot Q, P_Q^*(\Gamma\otimes f) \rangle.
\end{align*}
The second term on the right hand side is $$B_Q(w, P_Q^*(\Gamma\otimes f)) = B_Q^*(P_Q^*(\Gamma\otimes f),w)=0,$$
because of the fact that $w\in\Gamma\otimes \widetilde H^s(\Omega)$ and the definition of weak solution to the adjoint problem. Thus we are left with
\begin{multline*}
    0 = \langle (-\Delta)^{s-1}Dw -w\cdot Q, \Gamma\otimes f \rangle \\ = \langle (-\Delta)^{s-1}Dw -w\cdot Q, \gamma\otimes f \rangle =  \langle (-\Delta)^{s-1}Dw, \gamma\otimes f \rangle
\end{multline*}
because of the assumption that the supports of $f,w$ are disjoint. Eventually 
$$ 0 = \langle (-\Delta)^{s-1}Dw, \gamma\otimes f \rangle = \langle (-\Delta)^{s}w', \gamma\otimes f \rangle = \langle (-\Delta)^{s}(\gamma\cdot w'), f \rangle, $$ which by the arbitrariety of $f$ implies $(-\Delta)^{s}(\gamma\cdot w') =0$ in $W$. The UCP for the fractional Laplacian and the exterior datum of $w$ now imply $\gamma\cdot w' \equiv 0$. Thus in $\Omega$ by Lemma \ref{product-properties}
\begin{align*}
    \frac{(\gamma\cdot\Gamma)F}{|\Gamma|^2}&= \gamma\cdot \frac{\Gamma\otimes F}{|\Gamma|^2} = \gamma \cdot (-(-\Delta)^{s-1}Dw +w\cdot Q) \\ & =  -(-\Delta)^{s}(\gamma\cdot w') +\gamma\cdot(w\cdot Q)  =\gamma\cdot(w\cdot Q) \\ & = \gamma\cdot((\Gamma\otimes \phi)\cdot Q) = (\gamma\cdot\Gamma)(\phi\cdot Q),
\end{align*}
and by the positivity of $\gamma\cdot\Gamma$ we get $\phi\cdot Q = F/|\Gamma|^2$ in $\Omega$. Therefore in $\Omega$ $$ (-\Delta)^sw' = w\cdot Q {-} \frac{\Gamma\otimes F}{|\Gamma|^2} = \Gamma\otimes \left( \phi\cdot Q {-} \frac{F}{|\Gamma|^2} \right)=0, $$
which means that $w'$ solves
\begin{equation*}
    \begin{array}{rll}
       (-\Delta)^{s}w' & = 0 & \quad \mbox{ in } \Omega \\
       w' & = 0 & \quad \mbox{ in } \Omega_e
    \end{array}.
\end{equation*}
By the well-posedness of the direct problem for the fractional Laplacian, we deduce $w'\equiv 0$, which entails $w\equiv 0$ and eventually $F\equiv 0$.
\end{proof}

With this result at hand, we can prove our main Theorem.

\begin{proof}[Proof of Theorem \ref{main-theorem}] \textbf{Step 1.}
Given that the known data can always be restricted, we can without loss of generality assume that the sets $W_1$ and $W_2$ are disjoint. Let $f_j\in C^\infty_c(W_j)$ for $j=1,2$, and define $u_1 := P_{Q_1}(\gamma\otimes f_1)$, $u_2 := P_{Q_2}^*(\gamma\otimes f_2)$.  
By the definition of the Poisson operators we have $u_1 = \Gamma_1\otimes v_1$, $u_2^* = \Gamma_2\otimes v_2$ for some $v_1,v_2\in  H^s(\R^n)$. Thus by the Alessandrini identity from Lemma \ref{alex} and Lemma  \ref{product-properties} it holds that
\begin{equation}\begin{split}\label{main-1}
    0 & = (n/2+s)\langle (\Lambda_{L_1,M_1}-\Lambda_{L_2,M_2})[f_1],[f_2]\rangle \\ & = \langle u_1 \cdot (Q_1-Q_2), u_2^* \rangle \\ & = \langle (\Gamma_1\otimes v_1) \cdot (Q_1-Q_2), \Gamma_2\otimes v_2 \rangle \\ & = \langle v_1 \cdot (Q_1-Q_2), (\Gamma_1\cdot \Gamma_2) v_2 \rangle.
\end{split}\end{equation}
 Let now $g_1, g_2$ be any functions belonging to $C^\infty_c(\Omega)$. Using the Runge approximation property from Lemma \ref{better-runge-2}, we can find two sequences $\{f_{j,i}\}_i\subset C^\infty_c(W_j)$, $j=1,2$, such that

$$  v_{1,i} :=  \frac{\Gamma_1\cdot P_{Q_1}(\gamma\otimes f_{1,i})}{|\Gamma_1|^2}= f_{1,i}+ g_1 + r_{1,i} ,\qquad \mbox{with} \quad \|r_{1,i}\|_{L^2(\Omega)} \leq 1/i,$$
and
$$  v_{2,i} :=  \frac{\Gamma_2\cdot P_{Q_2}^*(\gamma\otimes f_{2,i})}{|\Gamma_2|^2}= f_{2,i}+ g_2 + r_{2,i} ,\qquad \mbox{with} \quad \|r_{2,i}\|_{L^2(\Omega)} \leq 1/i.$$

Substituting $v_{1,i}, v_{2,i}$ into \eqref{main-1} gives
\begin{align*}
    0 & =   \langle v_1 \cdot (Q_1-Q_2), (\Gamma_1\cdot \Gamma_2) v_2 \rangle \\ & =  \langle (f_{1,i}+ g_1 + r_{1,i}) \cdot (Q_1-Q_2), (\Gamma_1\cdot \Gamma_2) (f_{2,i}+ g_2 + r_{2,i}) \rangle \\ & =  \langle ( g_1 + r_{1,i}) \cdot (Q_1-Q_2), (\Gamma_1\cdot \Gamma_2) ( g_2 + r_{2,i}) \rangle
\end{align*}
by the support assumptions. Moreover, the terms containing the errors $r_{j,i}$ vanish as $i\rightarrow \infty$, since for example
$$ |\langle r_{1,i}\cdot (Q_1-Q_2), (\Gamma_1\cdot \Gamma_2)r_{2,i} \rangle| \leq \| r_{1,i}\cdot (Q_1-Q_2)\|_{L^2(\Omega)} \|(\Gamma_1\cdot \Gamma_2)r_{2,i}\|_{L^2(\Omega)} \leq C/i^2,$$
and similarly for the other ones. Therefore, we are left with
\begin{align*}
    0 & = \langle  g_1 \cdot (Q_1-Q_2), (\Gamma_1\cdot \Gamma_2)  g_2  \rangle.
\end{align*}
Since $\Gamma_1\cdot \Gamma_2 = \mu_1\mu_2 + k_1k_2 > 0$, by the arbitrariety of $g_1, g_2 \in C^\infty_c(\Omega)$ we obtain $Q_1=Q_2$ in $\Omega$.
\\

\textbf{Step 2.} We are left with the task of proving that $Q_1=Q_2$ in $\Omega$ implies $\Gamma_1=\Gamma_2$. By Remark \ref{gauge}, the assumption $(L_1,M_1)\sim(L_2,M_2)$ already implies that $\Gamma_2 = r\Gamma_1$ for some fixed but unknown function $r$. Thus $\Gamma_2$ solves
 \begin{align*}
      \Gamma_2\cdot (-\Delta)^{s-1}D(\Gamma_2\otimes Id) = |\Gamma_2|^2 Q_1 & \quad\mbox{ in } \Omega, \\
      \Gamma_2 = \gamma & \quad\mbox{ in } \Omega_e,
 \end{align*}
 which implies that $r$ solves
 \begin{align*}
      \Gamma_1\cdot \left((-\Delta)^{s-1}D(\Gamma_1\otimes (r Id)) - (\Gamma_1 \otimes (r Id))\cdot Q_1\right)=0 & \quad\mbox{ in } \Omega, \\
      r = 1 & \quad\mbox{ in } \Omega_e.
 \end{align*} 
 By Remark \ref{matrix-u}, equation \eqref{big-formula} holds when $u=r Id$. Observe that the right hand side of equation \eqref{big-formula} can be rewritten as the left hand side of the above equation in $\Omega$ (see Proposition \ref{liouville}). This lets us deduce that $\mathbf{E}^s_{L_1,M_1}(r Id)=0$ holds in $\Omega$. Because the direct problem for the fractional elasticity equation can be showed to be well-posed in the matrix case as well by the same strategy used in Proposition \ref{well-posedness}, we conclude that it must necessarily be $r\equiv 1$.
\end{proof}

\begin{Rem}
In order to show that without previous knowledge of the Poisson ratio $\nu$ one may indeed incur in a gauge invariance, we analyze the simple case $n=1$.
\vspace{3mm}

Given that it has only one element, the square root of the stiffness tensor is  just $C_{1111}^{1/2} = \lambda+2\mu = k$. The fractional elasticity operator $\mathbf E^s$ becomes the conductivity operator $\mathbf C^s_{k^2}$ in one dimension, and the fractional Liouville reduction given in this work for the former coincides with the reduction shown in \cite{C20} for the latter. Since the operator has evidently lost all information about $\mu$, it will certainly be impossible to recover such function, which proves that at least this gauge invariance is present in the problem of recovering both Lam\'e parameters $(\mu,k)$. However, \cite{C20} also shows that the conductivity (in our current case $k^2$) can be recovered without gauge from exterior data in the form of the DN map. This proves that in one dimension the gauge is in fact limited to the shear modulus $\mu$.
\end{Rem}

\begin{Rem}\label{why-though}
At this point, we can clarify why did we need a fractional Liouville reduction in the first place. In principle, it would have been possible after the definition of $\Lambda_{L,M}$ to na\"ively prove an Alessandrini identity directly for such DN map. We would have obtained
\begin{equation}\label{wrong-alex}
\langle (\Lambda_{L_1,M_1}-\Lambda_{L_2,M_2})[f_1],[f_2]\rangle = \langle (\mathbf E_1^s-\mathbf E_2^s)u_1, u_2^* \rangle,    
\end{equation}
with clear meaning of the involved symbols. However, the operator on the right hand side of \eqref{wrong-alex} is \emph{nonlocal}, as opposed to the \emph{local} multiplication operator we obtained on the right hand side of the Alessandrini identity from Lemma \ref{alex}. This is no minor distinction, as it affects the success of our plan. In fact, when proving the main theorem we would have obtained
$$ 0 = \langle (\Lambda_{L_1,M_1}-\Lambda_{L_2,M_2})[f_{1,k}],[f_{2,k}]\rangle = \langle (\mathbf E_1^s-\mathbf E_2^s)(f_{1,k}+v_1+r_{1,k}), f_{2,k}+v_2+r_{2,k}  \rangle $$
for some fixed $v_1,v_2\in C^\infty_c(\Omega)$ and $f_{j,k}\in C^\infty_c(W_j)$, $j=1,2$. Given the nonlocality of $\mathbf E_1^s-\mathbf E_2^s$, here we could not dismiss the exterior data $f_{j,k}$ by support assumptions. This is quite dangerous, as it is known that as the Runge approximation improves a corresponding lost of control is expected for the exterior data (see e.g. \cite{RS17}). Thus we are prevented from obtaining the wanted result.

Observe that in most previous works (\cite{GSU20}, \cite{CLR18}, \cite{CMRU20} come to mind) the operator on the right hand side of the Alessandrini identity was local as well. In \cite{C20a} a similar, albeit simpler fractional Liouville reduction was needed for the same reason as in our current work. Finally, the recent paper \cite{C21} considers an Alessandrini identity with a nonlocal operator on the right hand side, but this is assumed to be such that a fine control can still be established over the exterior data.
\end{Rem}

\bibliographystyle{alpha}
\bibliography{main-ARXIV.bbl}

\begin{thebibliography}{CMRU22}

\bibitem[AA11]{AskesAifantis}
Harm Askes and Elias~C. Aifantis.
\newblock Gradient elasticity in statics and dynamics: an overview of
  formulations, length scale identification procedures, finite element
  implementations and new results.
\newblock {\em International Journal of Solids and Structures},
  48(13):1962--1990, 2011.

\bibitem[AKNT02]{Ela5}
Habib Ammari, Hyeonbae Kang, Gen Nakamura, and Kazumi Tanuma.
\newblock Complete asymptotic expansions of solutions of the system of
  elastostatics in the presence of an inclusion of small diameter and detection
  of an inclusion.
\newblock {\em J. Elasticity}, 67(2):97--129 (2003), 2002.

\bibitem[Bar76]{Ela12}
Victor Barcilon.
\newblock Inverse problem for a vibrating beam.
\newblock {\em Z. Angew. Math. Phys.}, 27(3):347--358, 1976.

\bibitem[BC94]{Ela9}
P.~Ballard and A.~Constantinescu.
\newblock On the inversion of subsurface residual stresses from surface stress
  measurements.
\newblock {\em J. Mech. Phys. Solids}, 42(11):1767--1787, 1994.

\bibitem[BC05]{survey}
M.~Bonnet and A.~Constantinescu.
\newblock {Inverse problems in elasticity}.
\newblock {\em Inverse Problems}, 21:R1--R50, 2005.

\bibitem[BG04]{Ela11}
Paul~E. Barbone and Nachiket~H. Gokhale.
\newblock Elastic modulus imaging: on the uniqueness and nonuniqueness of the
  elastography inverse problem in two dimensions.
\newblock {\em Inverse Problems}, 20(1):283--296, 2004.

\bibitem[BGU21]{BGU21}
S.~Bhattacharyya, T.~Ghosh, and G.~Uhlmann.
\newblock Inverse problems for the fractional-{L}aplacian with lower order
  non-local perturbations.
\newblock {\em Trans. Amer. Math. Soc.}, 374(5):3053--3075, 2021.

\bibitem[CCS11]{CarpinteriCornettiSapora-2011}
A.~Carpinteri, P.~Cornetti, and A.~Sapora.
\newblock A fractional calculus approach to nonlocal elasticity.
\newblock {\em The European Physical Journal Special Topics}, 193(1):193--204,
  2011.

\bibitem[CLR20]{CLR18}
Mihajlo Ceki\'{c}, Yi-Hsuan Lin, and Angkana R\"{u}land.
\newblock The {C}alder\'{o}n problem for the fractional {S}chr\"{o}dinger
  equation with drift.
\newblock {\em Calc. Var. Partial Differential Equations}, 59(3):Paper No. 91,
  46, 2020.

\bibitem[CMR21]{CMR20}
Giovanni Covi, Keijo M\"{o}nkk\"{o}nen, and Jesse Railo.
\newblock Unique continuation property and {P}oincar\'{e} inequality for higher
  order fractional {L}aplacians with applications in inverse problems.
\newblock {\em Inverse Probl. Imaging}, 15(4):641--681, 2021.

\bibitem[CMRU22]{CMRU20}
Giovanni Covi, Keijo M\"{o}nkk\"{o}nen, Jesse Railo, and Gunther Uhlmann.
\newblock The higher order fractional {C}alder\'{o}n problem for linear local
  operators: uniqueness.
\newblock {\em Adv. Math.}, 399:Paper No. 108246, 29, 2022.

\bibitem[Con95]{Ela48}
A.~Constantinescu.
\newblock {On the identification of elastic moduli from displacement-force
  boundary measurements}.
\newblock {\em Inverse Problems Eng.}, 1:293--315, 1995.

\bibitem[Cov20a]{C20a}
Giovanni Covi.
\newblock An inverse problem for the fractional {S}chr\"{o}dinger equation in a
  magnetic field.
\newblock {\em Inverse Problems}, 36(4):045004, 24, 2020.

\bibitem[Cov20b]{C20}
Giovanni Covi.
\newblock Inverse problems for a fractional conductivity equation.
\newblock {\em Nonlinear Anal.}, 193:111418, 18, 2020.

\bibitem[Cov21]{C21}
Giovanni Covi.
\newblock {Uniqueness for the fractional Calderón problem with quasilocal
  perturbations}.
\newblock {\em Preprint, arXiv:2110.11063, to appear in SIAM Journal on
  Mathematical Analysis (SIMA)}, 2021.

\bibitem[DGLZ12]{DGLZ12}
Qiang Du, Max Gunzburger, R.~B. Lehoucq, and Kun Zhou.
\newblock Analysis and approximation of nonlocal diffusion problems with volume
  constraints.
\newblock {\em SIAM Rev.}, 54(4):667--696, 2012.

\bibitem[DGLZ13]{DGLZ13}
Qiang Du, Max Gunzburger, R.~B. Lehoucq, and Kun Zhou.
\newblock A nonlocal vector calculus, nonlocal volume-constrained problems, and
  nonlocal balance laws.
\newblock {\em Math. Models Methods Appl. Sci.}, 23(3):493--540, 2013.

\bibitem[DP09]{DP09}
Erwan Deriaz and Val\'{e}rie Perrier.
\newblock Orthogonal {H}elmholtz decomposition in arbitrary dimension using
  divergence-free and curl-free wavelets.
\newblock {\em Appl. Comput. Harmon. Anal.}, 26(2):249--269, 2009.

\bibitem[ER02]{ER2002}
G.~Eskin and J.~Ralston.
\newblock On the inverse boundary value problem for linear isotropic
  elasticity.
\newblock {\em Inverse Problems}, 18(3):907--921, 2002.

\bibitem[ER04]{ER2004}
Gregory Eskin and James Ralston.
\newblock On the inverse boundary value problem for linear isotropic elasticity
  and {C}auchy-{R}iemann systems.
\newblock In {\em Inverse problems and spectral theory}, volume 348 of {\em
  Contemp. Math.}, pages 53--69. Amer. Math. Soc., Providence, RI, 2004.

\bibitem[Eri72]{Eringen-1972}
A.~Cemal Eringen.
\newblock Linear theory of nonlocal elasticity and dispersion of plane waves.
\newblock {\em International Journal of Engineering Science}, 10(5):425--435,
  1972.

\bibitem[Eri02]{Er02}
A.~Cemal Eringen.
\newblock {\em Nonlocal continuum field theories}.
\newblock Springer-Verlag, New York, 2002.

\bibitem[FZ20]{Failla-Zingales-2020}
Giuseppe Failla and Massimiliano Zingales.
\newblock Advanced materials modelling via fractional calculus: {C}hallenges
  and perspectives, 2020.

\bibitem[GM89]{Ela67}
Z.~Gao and T.~Mura.
\newblock {On the inversion of residual stresses from surface measurements}.
\newblock {\em ASME J. Appl. Mech.}, 56:508--13, 1989.

\bibitem[GP03]{Ela70}
Giuseppe Geymonat and St\'{e}phane Pagano.
\newblock Identification of mechanical properties by displacement field
  measurement: a variational approach.
\newblock {\em Meccanica}, 38(5):535--545, 2003.
\newblock Dedicated to Piero Villaggio on the occasion of his 70th birthday.

\bibitem[GRSU20]{GRSU20}
Tuhin Ghosh, Angkana R\"{u}land, Mikko Salo, and Gunther Uhlmann.
\newblock Uniqueness and reconstruction for the fractional {C}alder\'{o}n
  problem with a single measurement.
\newblock {\em J. Funct. Anal.}, 279(1):108505, 42, 2020.

\bibitem[GSU20]{GSU20}
Tuhin Ghosh, Mikko Salo, and Gunther Uhlmann.
\newblock The {C}alder\'{o}n problem for the fractional {S}chr\"{o}dinger
  equation.
\newblock {\em Anal. PDE}, 13(2):455--475, 2020.

\bibitem[H{\"o}r03]{HO:analysis-of-pdos}
Lars H{\"o}rmander.
\newblock {\em The analysis of linear partial differential operators. {I}}.
\newblock Classics in Mathematics. Springer-Verlag, Berlin, 2003.
\newblock Distribution theory and Fourier analysis, Reprint of the second
  (1990) edition [Springer, Berlin; MR1065993 (91m:35001a)].

\bibitem[Ike90]{Ela81}
Masaru Ikehata.
\newblock Inversion formulas for the linearized problem for an inverse boundary
  value problem in elastic prospection.
\newblock {\em SIAM J. Appl. Math.}, 50(6):1635--1644, 1990.

\bibitem[Ike93]{Ela82}
Masaru Ikehata.
\newblock An inverse problem for the plate in the {L}ove-{K}irchhoff theory.
\newblock {\em SIAM J. Appl. Math.}, 53(4):942--970, 1993.

\bibitem[Ike95]{Ela83}
Masaru Ikehata.
\newblock The linearization of the {D}irichlet to {N}eumann map in anisotropic
  plate theory.
\newblock {\em Inverse Problems}, 11(1):165--181, 1995.

\bibitem[IY15]{IY2015}
O.~Yu. Imanuvilov and M.~Yamamoto.
\newblock Global uniqueness in inverse boundary value problems for the
  {N}avier-{S}tokes equations and {L}am\'{e} system in two dimensions.
\newblock {\em Inverse Problems}, 31(3):035004, 46, 2015.

\bibitem[JR15]{JinRundell2015}
Bangti Jin and William Rundell.
\newblock A tutorial on inverse problems for anomalous diffusion processes.
\newblock {\em Inverse Problems}, 31(3):035003, 40, 2015.

\bibitem[KST06]{KilbasSrivastavaTrujillo-2006}
Anatoly~A. Kilbas, Hari~M. Srivastava, and Juan~J. Trujillo.
\newblock {\em Theory and applications of fractional differential equations},
  volume 204 of {\em North-Holland Mathematics Studies}.
\newblock Elsevier Science B.V., Amsterdam, 2006.

\bibitem[Kwa17]{Kw15}
Mateusz Kwa\'{s}nicki.
\newblock Ten equivalent definitions of the fractional {L}aplace operator.
\newblock {\em Fract. Calc. Appl. Anal.}, 20(1):7--51, 2017.

\bibitem[LBZA97]{BenZion}
Vladimir Lyakhovsky, Yehuda Ben-Zion, and Amotz Agnon.
\newblock Distributed damage, faulting, and friction.
\newblock {\em Journal of Geophysical Research: Solid Earth},
  102(B12):27635--27649, 1997.

\bibitem[Li20a]{Li20b}
Li~Li.
\newblock The {C}alder\'{o}n problem for the fractional magnetic operator.
\newblock {\em Inverse Problems}, 36(7):075003, 14, 2020.

\bibitem[Li20b]{Li20a}
Li~Li.
\newblock A semilinear inverse problem for the fractional magnetic {L}aplacian.
\newblock {\em Preprint, arXiv:2005.06714}, 2020.

\bibitem[Li21a]{Li21}
Li~Li.
\newblock Determining the magnetic potential in the fractional magnetic
  {C}alder\'{o}n problem.
\newblock {\em Comm. Partial Differential Equations}, 46(6):1017--1026, 2021.

\bibitem[Li21b]{LL21}
Li~Li.
\newblock {On inverse problems arising in fractional elasticity}.
\newblock {\em Preprint, arXiv:2109.03387}, 2021.

\bibitem[LL59]{landau-elasticity}
L.~D. Landau and E.~M. Lifshitz.
\newblock {\em Theory of elasticity}.
\newblock Course of Theoretical Physics, Vol. 7. Pergamon Press,
  London-Paris-Frankfurt; Addison-Wesley Publishing Company, Inc., Reading,
  Mass., 1959.
\newblock Translated by J. B. Sykes and W. H. Reid.

\bibitem[LL19]{LL19}
Ru-Yu Lai and Yi-Hsuan Lin.
\newblock Global uniqueness for the fractional semilinear {S}chr\"{o}dinger
  equation.
\newblock {\em Proc. Amer. Math. Soc.}, 147(3):1189--1199, 2019.

\bibitem[LLR20]{LLR19}
Ru-Yu Lai, Yi-Hsuan Lin, and Angkana R\"{u}land.
\newblock The {C}alder\'{o}n problem for a space-time fractional parabolic
  equation.
\newblock {\em SIAM J. Math. Anal.}, 52(3):2655--2688, 2020.

\bibitem[LO22]{LO21}
Ru-Yu Lai and Laurel Ohm.
\newblock Inverse problems for the fractional {L}aplace equation with lower
  order nonlinear perturbations.
\newblock {\em Inverse Probl. Imaging}, 16(2):305--323, 2022.

\bibitem[LW03]{Ela106}
Ching-Lung Lin and Jenn-Nan Wang.
\newblock Uniqueness in inverse problems for an elasticity system with residual
  stress by a single measurement.
\newblock {\em Inverse Problems}, 19(4):807--820, 2003.

\bibitem[McL00]{McLean}
William McLean.
\newblock {\em Strongly elliptic systems and boundary integral equations}.
\newblock Cambridge University Press, Cambridge, 2000.

\bibitem[Min63]{Mindlin-1964}
Raymond~David Mindlin.
\newblock Microstructure in linear elasticity.
\newblock Technical report, Columbia Univ., New York, Dept. of Civil
  Engineering and Engineering Mechanics, 1963.

\bibitem[Min65]{Mindlin-1965}
Raymond~David Mindlin.
\newblock Second gradient of strain and surface-tension in linear elasticity.
\newblock {\em International Journal of Solids and Structures}, 1(4):417--438,
  1965.

\bibitem[MS09]{MS-theory-of-sobolev-multipliers}
Vladimir~G. Maz'ya and Tatyana~O. Shaposhnikova.
\newblock {\em Theory of {S}obolev multipliers}, volume 337 of {\em Grundlehren
  der mathematischen Wissenschaften [Fundamental Principles of Mathematical
  Sciences]}.
\newblock Springer-Verlag, Berlin, 2009.
\newblock With applications to differential and integral operators.

\bibitem[NU94]{NU1994}
Gen Nakamura and Gunther Uhlmann.
\newblock Global uniqueness for an inverse boundary problem arising in
  elasticity.
\newblock {\em Invent. Math.}, 118(3):457--474, 1994.

\bibitem[NU03]{NU2003}
Gen Nakamura and Gunther Uhlmann.
\newblock Erratum: ``{G}lobal uniqueness for an inverse boundary value problem
  arising in elasticity'' [{I}nvent. {M}ath. {\bf 118} (1994), no. 3, 457--474;
  {MR}1296354 (95i:35313)].
\newblock {\em Invent. Math.}, 152(1):205--207, 2003.

\bibitem[OS07]{OdibatShawagfeh-2007}
Zaid~M. Odibat and Nabil~T. Shawagfeh.
\newblock Generalized {T}aylor's formula.
\newblock {\em Appl. Math. Comput.}, 186(1):286--293, 2007.

\bibitem[Rob97]{Ela127}
R.~L. Robertson.
\newblock Boundary identifiability of residual stress via the {D}irichlet to
  {N}eumann map.
\newblock {\em Inverse Problems}, 13(4):1107--1119, 1997.

\bibitem[RS20a]{RS17}
Angkana R\"{u}land and Mikko Salo.
\newblock The fractional {C}alder\'{o}n problem: low regularity and stability.
\newblock {\em Nonlinear Anal.}, 193:111529, 56, 2020.

\bibitem[RS20b]{RS17a}
Angkana R\"{u}land and Mikko Salo.
\newblock Quantitative approximation properties for the fractional heat
  equation.
\newblock {\em Math. Control Relat. Fields}, 10(1):1--26, 2020.

\bibitem[R{\"u}l18]{R18}
Angkana R{\"u}land.
\newblock Unique continuation, {R}unge approximation and the fractional
  {C}alder{\'o}n problem.
\newblock {\em Journ{\'e}es {\'e}quations aux d{\'e}riv{\'e}es partielles},
  Exp. No. 8:10p, 2018.

\bibitem[Sal17]{S17}
Mikko Salo.
\newblock {The fractional Calderón problem}.
\newblock {\em Journées équations aux dérivées partielles}, Exp. No. 7:8p,
  2017.

\bibitem[TA19]{TarasovAifantis-2018}
Vasily~E. Tarasov and Elias~C. Aifantis.
\newblock On fractional and fractal formulations of gradient linear and
  nonlinear elasticity.
\newblock {\em Acta Mech.}, 230(6):2043--2070, 2019.

\bibitem[Tay11]{Ta96}
Michael~E. Taylor.
\newblock {\em Partial differential equations {III}. {N}onlinear equations},
  volume 117 of {\em Applied Mathematical Sciences}.
\newblock Springer, New York, second edition, 2011.

\bibitem[Tri83]{TRI-interpolation-function-spaces}
Hans Triebel.
\newblock {\em Theory of function spaces}, volume~78 of {\em Monographs in
  Mathematics}.
\newblock Birkh\"{a}user Verlag, Basel, 1983.

\bibitem[ZO20]{ZoricaOparnica2020}
Du\v{s}an Zorica and Ljubica Oparnica.
\newblock Energy dissipation for hereditary and energy conservation for
  non-local fractional wave equations.
\newblock {\em Philos. Trans. Roy. Soc. A}, 378(2172):20190295, 24, 2020.

\end{thebibliography}

\vspace{5mm}
\begin{itemize}
    \item[] Giovanni Covi - \textsc{Institut fur Angewandte Mathematik, Ruprecht-Karls-Universit\"at Heidelberg, Germany} (\texttt{giovanni.covi@uni-heidelberg.de})
    \item[] Maarten de Hoop - \textsc{Department of Computational and Applied Mathematics, Rice University, Houston, TX, USA} (\texttt{mvd2@rice.edu})
    \item[] Mikko Salo - \textsc{Department of Mathematics and Statistics, University of Jyv\"askyl\"a, Finland} (\texttt{mikko.j.salo@jyu.fi})
\end{itemize}

\end{document}